\documentclass{article}
\usepackage[utf8]{inputenc}
\usepackage[T1]{fontenc}
\usepackage{amsmath}
\usepackage{amsfonts}
\usepackage{amssymb}
\usepackage{tikz}
\usepackage{xcolor}
\usepackage[english]{babel}
\usepackage{wasysym}
\usepackage{MnSymbol}
\usepackage{enumitem}
\usepackage{esint}
\usepackage{float}
\usepackage{graphicx}
\usepackage{epstopdf}

\usepackage{xcolor}
\colorlet{darkgreen}{black!30!green}
\definecolor{medblue}{rgb}{0.0,0.535,1.0}
\definecolor{lightblue}{rgb}{0.225,1.0,0.743}
\definecolor{lightgreen}{rgb}{0.743,1.0,0.225}


\newcommand*{\colorline}
[1]{$\vcenter{\hbox{\protect\tikz \protect\draw[thick,#1] (0,0)--(0.4,0);}}$}
\newcommand*{\sqsolid}
[1]{$\vcenter{\hbox{\tikz{\draw[thick,#1] (0,0)--(0.4,0) node[midway]{$\medsquare$};}}}$}
\newcommand*{\circdashed}
[1]{$\vcenter{\hbox{\tikz{\draw[thick,dashed,#1] (0,0)--(0.55,0) node[midway]{$\medcircle$};}}}$}

\usetikzlibrary{calc,arrows,decorations.pathmorphing,backgrounds,positioning,fit,petri,shapes.geometric,shapes.symbols,shapes.misc}
\colorlet{darkgreen}{black!30!green!}
\colorlet{darkred}{black!30!red!}
\colorlet{lightyellow}{yellow!20!white!}
\addtolength{\topmargin}{-2cm} \addtolength{\textheight}{4.0cm}
\addtolength{\evensidemargin}{-1.5cm}
\addtolength{\oddsidemargin}{-1.5cm} \addtolength{\textwidth}{4.0cm}
\newtheorem{theorem}{Theorem}

\newtheorem{corollary}[theorem]{Corollary}

\newtheorem{definition}[theorem]{Definition}

\newtheorem{lemma}[theorem]{Lemma}

\newtheorem{remark}[theorem]{Remark}

\newenvironment{proof}[1][Proof]{\noindent\textbf{#1.} }{\ \rule{0.5em}{0.5em}}

\begin{document}

\title{High order Lagrange-Galerkin methods for the conservative formulation
of the advection-diffusion equation}
\author{Rodolfo Bermejo$^{a}$, Manuel Colera$^{b}$ \\
{\small (a) Dpto. Matem\'{a}tica Aplicada a la Ingenier\'{\i}a Industrial
ETSII. Universidad Polit\'ecnica de Madrid.} \\
[0pt] {\small e-mail: rodolfo.bermejo@upm.es } \\
[0pt] {\small (b) Dpto. Ingenier\'ia Energ\'etica ETSII. Universidad
Polit\'ecnica de Madrid.} \\
[0pt] {\small e-mail: m.colera@upm.es }}
\date{}
\maketitle

\begin{abstract}
We introduce in this paper the numerical analysis of high order both in time
and space Lagrange-Galerkin methods for the conservative formulation of the
advection-diffusion equation. As time discretization scheme we consider the
Backward Differentiation Formulas up to order $q=5$. The development and
analysis of the methods are performed in the framework of time evolving
finite elements presented in C. M. Elliot and T. Ranner, IMA Journal of
Numerical Analysis \textbf{41}, 1696-1845 (2021). The error estimates show
through their dependence on the parameters of the equation the existence of
different regimes in the behavior of the numerical solution; namely, in the
diffusive regime, that is, when the diffusion parameter $\mu$ is large, the
error is $O(h^{k+1}+\Delta t^{q})$, whereas in the advective regime, $\mu
\ll 1$, the convergence is $O(\min (h^{k},\frac{h^{k+1}
}{\Delta t})+\Delta t^{q})$. It is worth remarking that the error constant
does not have exponential $\mu ^{-1}$ dependence.
\end{abstract}

\textit{Keywords} Advection-diffusion equations, Conservative formulation,
High order Lagrange-Galerkin, Backward Differentiation Formulas, Finite
Elements. 

\textit{Mathematics Subject Classification (2010)} 65M12, 65M25, 65M60, 65M50

\section{Introduction}

We present the numerical analysis of Lagrange-Galerkin methods, introduced
in \cite{colera}, developed to integrate the conservative formulation of the
advection-diffusion equation. Specifically, let $[0,T]$, $T>0$, be a time
interval, and let $\Omega \subset \mathbb{R}^{d}$ ($d=2$ or $3$) be a
bounded open domain with a piecewise smooth Lipschitz continuous boundary $%
\partial \Omega $, $\nu =\nu (x)$ being the outward unit normal vector at
each point $x$ of $\partial \Omega $. Using the notations $\overline{\Omega }%
:=\Omega \cup \partial \Omega $, $Q_{T}:=\Omega \times (0,T)$ and $\overline{%
Q}_{T}:=\overline{\Omega }\times \lbrack 0,T)$, we consider the
scalar-valued function $c:\overline{Q}_{T}\rightarrow \mathbb{R}$ that
satisfies the equation%
\begin{equation}
\left\{
\begin{array}{l}
\displaystyle\frac{\partial c}{\partial t}+\nabla \cdot (uc)-\mu \Delta
c+a_{0}c=f(x,t)\text{\ \textrm{in} }Q_{T}, \\
\\
(\mu \nabla c)\cdot \nu =0\ \ \text{\textrm{on}\ \ }\partial \Omega \times
(0,T), \\
\\
c(x,0)=c^{0}(x)\text{ \ \textrm{in} \ }\Omega ,%
\end{array}%
\right.  \label{int1}
\end{equation}%
where $u:=\overline{Q}_{T}\rightarrow \mathbb{R}^{d}$ is a vector-valued
function representing a flow velocity, for simplicity we assume that for all
$t$, $u\cdot \nu =0$ on $\partial \Omega $; $\mu $ and $a_{0}$ are positive constant
coefficients. If $c(x,t)$ describes the space-time evolution of a physical
property, let us say, the concentration of a chemical substance, the
partial differential equation of problem (\ref{int1}) is the point-wise
representation of the conservation law of such a substance. The global mass
balance of $c(x,t)$ is given by the expression%
\begin{equation}
\int_{\Omega }c(x,t)d\Omega +a_{0}\int_{0}^{t}\int_{\Omega }c(x,\tau
)d\Omega d\tau =\int_{\Omega }c^{0}(x)d\Omega +\int_{0}^{t}\int_{\Omega
}f(x,\tau )d\Omega d\tau .  \label{int1.1}
\end{equation}%
Thus, if $a_{0}=0$ and $f=0$ then from (\ref{int1.1}) it follows that the
integral of $c(x,t)$ over the domain $\Omega $ is constant for all $t$.
Since the global mass balance is a property derived from the conservation
law for $c(x,t)$, then the numerical methods employed to calculate a
numerical solution of (\ref{int1}) should preserve such a property; in this
case we say that the numerical methods are mass conservative or simply
conservative.

Many authors, such as (\cite{BK}, \cite{BS1}-\cite{BS4}, \cite{boukir}, \cite{DR}, \cite
{ewing}, \cite{Pi}, \cite{RT}, \cite{Su}), just to cite a few,
have applied Lagrange-Galerkin methods to calculate a numerical solution of (%
\ref{int1}) when the problem is advection dominated and the velocity field $%
u $ is divergence free, i.e., $\nabla \cdot u=0$ a.e. in $Q_{T}$, for in
this case%
\begin{equation*}
\frac{\partial c}{\partial t}+\nabla \cdot (uc)=\frac{Dc}{Dt}:=\frac{%
\partial c}{\partial t}+u\cdot \nabla c\text{,}
\end{equation*}%
$\frac{D}{Dt}$ being the material derivative. Advection
dominated-diffusion problems are characterized by the formation of
narrow regions along the solid boundaries in which the solution
develops strong gradients. The dimensional analysis of equation
(\ref{int1}) yields the so called P\'{e}clet number $Pe=UL/\mu $
($U$ and $L$ are characteristic velocity and length scales
respectively), which is very high in this type of problems, and via
perturbation analysis one can show that the width of the boundary
layers is $O(Pe^{-\alpha }),\ 0<\alpha <1$. So, in order to obtain a
good numerical solution when $Pe$ is very high, one needs to
allocate a sufficient number of mesh points in the boundary layers,
otherwise spurious oscillations develop there polluting the
numerical solution in the whole domain; this means that the mesh
parameter $h$ in the boundary layer region must be very small so that
numerical stability reasons recommend the usage of implicit time
marching schemes, because for
explicit time schemes the length of the time step $\Delta t$ should be $O(h^{2})$ in order to obtain a
stable numerical solution, but this may be an unreasonably small
time step. Different approaches have been devised to overcome this
drawback. In the Eulerian formulation of the numerical method, which
uses a fixed mesh, we should refer to the SUPG
(Stream-Upwind-Petrov-Galerkin) and the Galerkin/least squares
methods developed by T. Hughes and coworkers, see for instance
\cite{BH} and \cite{HFH}. In the Lagrangian approach \cite{MG} one
attempts to devise a stable numerical method by allowing the mesh to
follow the trajectories of the flow; the problem now is that after a number of time steps the mesh
undergoes large deformations due to stretching and shearing, and consequently some sort of remeshing has
to be made in order to proceed with the calculations. The latter may
become a source of large errors. In the Eulerian-Lagrangian approach
the purpose is to get a method that combines the good properties of
both the Eulerian and Lagrangian approaches. There have been various
methods trying to do so, among them we shall cite the
characteristics streamline diffusion (hereafter CSD), the backward
Lagrange-Galerkin or simply Lagrange-Galerkin (LG) methods (also
termed Characteristics Galerkin). The CSD method has been developed
by, for example, \cite{hansbo-1} and \cite{johnson-caracteristicas}
and intends to combine the good properties of both the Lagrangian
methods and the streamline diffusion method by orienting the
space-time mesh along the characteristics in space-time, thus
yielding to a particular version of the streamline diffusion method.
The Lagrange-Galerkin methods approximate the material derivative
\begin{equation*}
\frac{Dc}{Dt}=\frac{\partial c}{\partial t}+u\cdot \nabla c
\end{equation*}
at each time step by a backward in time discretization along the
characteristics $X\left( x,t_{n+1};t\right) $ of the
operator ${\displaystyle\frac{\partial }{\partial t}}+u\cdot \nabla
$, $t_{n-l}\leq t<t_{n+1}$, $l$ being an integer. At $t=t_{n+1}$,
$X\left( x,t_{n+1};t_{n+1}\right) =x \in \Omega $. The diffusion
terms are implicitly discretized on the fixed mesh generated on
$\overline{\Omega }$. The point here is how to evaluate $c\left(
X\left( x,t_{n+1};t\right) ,t\right) $. One way is by $L^{2}$
projection in the finite dimensional space associated with the fixed
mesh, as the Lagrange-Galerkin methods do. Another way is by
polynomial interpolation projection, in this case the method is called
semi-Lagrangian. The advantages of the LG methods are various. From a
practical point of view we have the following: (i) they
partially circumvent the troubles caused by the advective terms,
because discretizing backward in time along the characteristic curves is a
natural way of introducing upwinding in the space discretization of
the differential equation; (ii) unlike the pure Lagrangian methods,
they do not suffer from mesh-deformation, so that no remeshing is
needed; (iii) they yield algebraic symmetric systems of equations to
be solved. From a numerical analysis point of view, we shall show in
this paper that the constant $C$ that appears in the error
estimates of the LG methods is much smaller than the corresponding
constant of the standard Eulerian Galerkin methods and, what it is
more important, is uniformly bounded with respect to the parameter
$\mu $.\ To appreciate the relevance of this behavior of $C$, we
recall \cite{Quar-valli} that the error constant of standard Eulerian
Galerkin methods in convection-diffusion problems takes the form
$C_{G}\sim \mu ^{-1}\exp (tC\mu ^{-1})$, and is sharp because the
Gibbs phenomenon, which appears in the boundary layers when the mesh
is coarse, grows exponentially.\ The $\mu ^{-1}$ dependence of the
constant $C_{G}$ makes the standard Eulerian Galerkin methods be unreliable
in advection dominated diffusion problems because in such problems
$\mu \ll 1$ and consequently $C_{G}$ becomes very large. This does not
happen in the LG methods because the dependence on $\mu $ is uniformly
bounded; however, this does not mean that these methods are free from the Gibbs
phenomenon if the grid is coarse, but such a phenomenon is well
under control and so is its pollutant effect. Nevertheless, the LG
methods have several drawbacks such as: (i) the calculation of the
feet of the characteristic curves every time step, this requires
solving, backward in time, many systems of ordinary differential
equations; and (ii) the calculation of some integrals, which come
from the Galerkin projection, whose integrands are the product of
functions defined in two different meshes. The first shortcoming is
in some way related to the second one because in order to keep the numerical solution stable, such integrals have to be calculated exactly; since, in general, this cannot be done this way, then one has to make use of high order quadrature rules, see \cite{Bermudez} where a
study on the behavior of the method with different quadrature rules
is performed. The use of high order quadrature rules means that many
quadrature points per element should be employed to evaluate the
integrals and, therefore, since each quadrature point has associated
a departure point, then many systems of ordinary differential
equations have to be solved numerically every time step; hence, the
whole procedure may become less efficient than it looks at first,
particularly in unstructured meshes, because the numerical
calculation of the feet of the characteristics curves requires the
location and identification of the elements containing such points,
and this task is not easy to do in such meshes; however, Bermejo and
Saavedra \cite{BS2} \ introduced the so called Modified
Lagrange-Galerkin (MLG) methods to alleviate the drawback (ii) of
the conventional LG methods while maintaining the rate of
convergence. The MLG methods were also applied to integrate the
Navier Stokes equations, see \cite{BS1} and \cite{BS3}. A brief
description of the conventional Lagrange-Galerkin approach to
integrate advection diffusion reaction problems when $\nabla
\cdot u=0$ is as follows. Let%
\begin{equation*}
\mathcal{P}_{\Delta t}:0=t_{0}<t_{1}<\cdots <t_{N}=T,\ \Delta
t=t_{n}-t_{n-1},
\end{equation*}%
be a uniform partition of $[0,T]$, at each time $t_{n}$ $(n=1,2,\ldots ,N)$,
the material derivative is discretized backward in time along its
characteristic curves $X(x,t_{n};t)$, $t_{n-1}\leq t<t_{n}$, as%
\begin{equation*}
\left. \frac{Dc(x,t)}{Dt}\right\vert _{t=t_{n}}=\frac{%
c(x,t_{n})-c(X(x,t_{n};t_{n-1}),t_{n-1})}{\Delta t}.
\end{equation*}%
The characteristic curves $X(x,t_{n};t)$ are, under the hypothesis
$u\in C([0,T];W^{1,\infty }(\Omega ))$, the unique solution
of the differential system%
\begin{equation}
\left\{
\begin{array}{l}
\displaystyle\frac{dX(x,t_{n};t)}{dt}=u(X(x,t_{n};t),t)\text{,} \\
\\
X(x,t_{n};t_{n})=x,\ \ x\in \overline{\Omega }.%
\end{array}%
\right.  \label{int2}
\end{equation}%
Here, $t\rightarrow X(x,t_{n};t)$ represents the parametrized trajectory of
a particle that moving with speed $u$ occupies the point $x$\ at time $t_{n}$%
, i.e.,
\begin{equation}
X(x,t_{n};t)=x-\int_{t}^{t_{n}}u(X(x,t_{n};s),s)ds.  \label{int3}
\end{equation}
Let $\Omega (t):=\{X(x,t_{n};t),\ x\in \Omega \}$, then assuming that for
all $t$, $\nu \cdot u=0$ on $\partial \Omega $, it follows that for $x\in
\partial \Omega $, $X(x,t_{n};t)\in \partial \Omega $, consequently $\Omega (t)=\Omega
$ is invariant \cite{CW}. So, for $\left\vert t_{n}-t\right\vert $
sufficiently small, $x\rightarrow X(x,t_{n};t)$ is a quasi-isometric
homeomorphism from $\Omega $ onto $\Omega $ with Jacobian determinant $%
J(x,t_{n},t):=\det \left( \frac{\partial X_{i}(x,t_{n};t)}{\partial x_{j}}%
\right) =1$ a.e. in $\Omega $, and such that%
\begin{equation}
K^{-1}\left\vert x-y\right\vert \leq \left\vert
X(x,t_{n},t)-X(y,t_{n},t)\right\vert \leq K\left\vert x-y\right\vert ,
\label{int4}
\end{equation}%
where $K=\exp \left( (t_{n}-t)\left\vert u\right\vert _{L^{\infty
}(0,T;\ W^{1},^{\infty }(\Omega ))}\right) $ and $\left\vert \cdot
\right\vert $ is the Euclidean distance between two points in
$\overline{\Omega }$. Hence, the domain $\Omega $ is invariant under
the flow mapping $X(x,t_{n};t)$. The same result is
proven in \cite{Su} when $u=0$ on $\partial \Omega $. Thus, assuming that $%
\nabla \cdot u=0$, for $t$ $\in \lbrack t_{n-1},t_{n}]$ we can write the
backward Euler discretization of the differential equation of (\ref{int1})
along the characteristic curves as
\begin{equation*}
\displaystyle\frac{c^{n}-c^{n-1}(X(x,t_{n};t_{n-1}))}{\Delta t}-\mu \Delta
c^{n}+a_{0}c^{n}=f^{n}\text{ }\mathrm{in}\text{ }\Omega ,
\end{equation*}%
hereafter we shall use the notation $a^{n}$ to denote a function $a(x,t_{n})$
unless otherwise stated. Now, if we apply the finite element method for the
space approximation of this equation, denoting by $V_{h}$ the $H^{1}$%
-conforming finite element space, it follows that for all $n$ we calculate $%
c_{h}^{n}\in V_{h}$ as the solution of the equation%
\begin{equation*}
\displaystyle\frac{(c_{h}^{n}-c_{h}^{n-1}(X(x,t_{n};t_{n-1})),v_{h})_{\Omega
}}{\Delta t}+\mu \left( \nabla c_{h}^{n},\nabla v_{h}\right) _{\Omega
}+a_{0}(c_{h}^{n},v_{h})_{\Omega }=(f^{n},v_{h})_{\Omega }\ \forall v_{h}\in
V_{h},
\end{equation*}%
where $\left( c_{h}^{0},v_{h}\right) _{\Omega }=\left( c^{0},v_{h}\right)
_{\Omega }$, with $(a,b)_{\Omega }:=\int_{\Omega }abd\Omega $, $a$ and $b$
being measurable functions defined on $\Omega $. In this spirit, we can
construct the Lagrange-Galerkin methods when the discretization at time $%
t_{n} $ of the material derivative is performed via Backward Differentiation
Formulas (BDF) of order $q=1,2,3,4,5$ and $6$ as follows. Assuming we are
given starting approximations $c_{h}^{0},\ldots ,c_{h}^{q-1}\in V_{h}$, for
each $n>q-1$ we calculate $c_{h}^{n}$ as the solution of the equation%
\begin{equation*}
\frac{1}{\Delta t}\left( \sum_{i=0}^{q}\alpha _{i}\widehat{c}%
_{h}^{n-i},v_{h}\right) _{\Omega }+\mu \left( \nabla c_{h}^{n},\nabla
v_{h}\right) _{\Omega }+a_{0}(c_{h}^{n},v_{h})_{\Omega
}=(f^{n},v_{h})_{\Omega }\ \forall v_{h}\in V_{h},
\end{equation*}%
where $\widehat{c}_{h}^{n-i}:=c_{h}^{n-i}(X(x,t_{n};t_{n-i}))$. Notice that
for $i=0$, $\widehat{c}_{h}^{n}=c_{h}^{n}(x)$ because $X(x,t_{n};t_{n})=x$,
see (\ref{int2}). The real coefficients $\alpha _{i}$ are tabulated in, for
example, \cite{HNW}. For $q=1$, the above formula reduces to the first order
backward Euler Lagrange-Galerkin method with $\alpha _{0}=1$ and $\alpha
_{1}=-1$, whereas for $q=2$ the formula yields the second order in time
Lagrange-Galerkin-BDF-2 method with $\alpha _{0}=3/2$,\ $\alpha _{1}=-2$ and
$\alpha _{2}=1/2$.

Recently, the Lagrange-Galerkin approach in combination with the BDF-2 time
marching scheme has been applied to integrate problem (\ref{int1}) when $%
\nabla \cdot u\neq 0$ in \cite{FK}, being this work an extension of the
Lagrange-Galerkin-BDF-1 method introduced in \cite{RT2}, see Remark \ref%
{tabata} below. Our Lagrange-Galerkin methods to integrate problem (\ref{int1}%
), see (\ref{s2:11}) below, differs from those used in \cite{RT2} and \cite%
{FK} in that it sets up the weak formulation of (\ref{int1}) on the
transported domain $\Omega _{h}(t):=X(\Omega ,t_{n},t)$, see (\ref{s2:8})
below, and then applies the finite element technique combined with the BDF-q
scheme ($1\leq q\leq 5$) to calculate the numerical solution $c_{h}^{n}$;
so, our methods have to calculate terms such as $\int_{\Omega
_{h}(t_{n-i})}c_{h}^{n-i}(y)v_{h}^{n-i}(y)d\Omega _{h}(t_{n-i})$ that must
be approximated by quadrature rules of high order; nevertheless, making use
of the algorithm developed in \cite{BS1}-\cite{BS4} for the conventional LG
methods when $\nabla \cdot u=0$, these integrals can be evaluated very
efficiently \cite{colera}.

We introduce some notation about the functional spaces we use in the paper.
For $s\geq 0$ real and real $1\leq p\leq \infty $, $W^{s,p}(\Omega )$
denotes the real Sobolev spaces defined on $\Omega $ for scalar real-valued
functions. $\left\Vert \cdot \right\Vert _{W^{s,p}(\Omega )}$ and $%
\left\vert \cdot \right\vert _{W^{s,p}(\Omega )}$ denote the norm and
semi-norm, respectively, of $W^{s,p}(\Omega )$. When $s=0$, $W^{0,p}(\Omega
):=L^{p}(\Omega )$. For $p=2$, the spaces $W^{s,2}(\Omega )$ are denoted by $H^{s}(\Omega )$, which are real Hilbert spaces with inner product $(\cdot,\cdot )_{s, \Omega}$. For $s=0$, $H^{0}(\Omega ):=L^{2}(\Omega )$, the inner
product in $L^{2}(\Omega )$ is denoted by $(\cdot ,\cdot )_{\Omega}$. The
corresponding spaces of real vector-valued functions, $v:\Omega \rightarrow
\mathbb{R}^{d}$, $d>1$ integer, are denoted by $W^{s,p}(\Omega )^{d}$ $%
:=\{v:\Omega \rightarrow \mathbb{R}^{d}:\ v_{i}\in W^{s,p}(\Omega ),\ 1\leq
i\leq d\}$. Let $X$ be a real Banach space $(X,\left\Vert \cdot \right\Vert
_{X})$, if $v:(0,T)\rightarrow X$ is a strongly measurable function with
values in $X$, we set $\left\Vert v\right\Vert _{L^{p}(0,t;X)}:=\left(
\int_{0}^{t}\left\Vert v(\tau )\right\Vert _{X}^{p}d\tau \right) ^{1/p}$ for
$1\leq p<\infty $, and $\left\Vert v\right\Vert _{L^{\infty
}(0,t;X)}:=ess\sup_{0<\tau <t}\left\Vert v(\tau )\right\Vert _{X}$; when $%
t=T $, we shall write, unless otherwise stated, $\left\Vert v\right\Vert
_{L^{p}(X)}$. We shall also use the following discrete norms:%
\begin{equation*}
\left\Vert v\right\Vert _{l^{p}(X)}:=\left( \Delta t\sum_{i=1}^{N}\left\Vert
v(\tau _{i})\right\Vert _{X}^{p}\right) ^{1/p}\text{ \ \textrm{and \ }}%
\left\Vert v\right\Vert _{l^{\infty }(X)}:=\max_{1\leq i\leq N}\left\Vert
v(\tau _{i})\right\Vert _{X}
\end{equation*}%
corresponding to the time discrete spaces

\begin{equation*}
l^{p}(0,T;X):=\left\{ v:(0,t_{1},t_{2},\ldots ,t_{N}=T)\rightarrow X:\
\left( \Delta t\sum_{i=0}^{N}\left\Vert v(\tau _{i})\right\Vert
_{X}^{p}\right) ^{1/p}<\infty \right\} \ \ \mathrm{and}
\end{equation*}%
\begin{equation*}
l^{\infty }(0,T;X):=\left\{ v:(0,t_{1},t_{2},\ldots ,t_{N}=T)\rightarrow X:\
\max_{1\leq i\leq N}\left\Vert v(\tau _{i})\right\Vert _{X}<\infty \right\} ,
\end{equation*}%
respectively; and write $l^{p}(0,T;X)$ as $l^{p}(X)$. Finally, we shall also
use the spaces of $r$-times continuously differentiable functions on $\Omega $ $C^{r}(\Omega )$,
when $r=0$ we write $C(\Omega )$ instead of $C^{0}(\Omega )$, the spaces $%
C^{r,1}(\overline{\Omega })$, $r\geq 0$, of functions defined on the closure
of $\Omega $, $r$ -times continuously differentiable and with the $r$th
derivative being Lipschitz continuous, and the space of $r$-times continuously differentiable and
bounded functions in time with values in $X$ denoted by $C^{r}([0,T];X)$ with
norm $\left\Vert v\right\Vert _{C^{r}([0,T];X)}=\sup_{0\leq t\leq T}\sum_{0\leq \alpha \leq r}\left\Vert
D^{\alpha}v(t)\right\Vert _{X}$.

Throughout this paper $C$ will denote a generic positive constant which is
independent of the diffusion ($\mu $) and reaction ($a_{0}$) coefficients
and of both the space and time discretization parameters $h$ and $\Delta t$
respectively; $C$ will take different values in different places of
appearance. Many times we use the term elementary inequality to denote the
Cauchy inequality $ab\leq \frac{\epsilon a^{2}}{2}+\frac{b^{2}}{2\epsilon }$
($a,b,\epsilon >0$). We shall need the version of the discrete Gronwall
inequality \cite{HRan}%
\begin{equation}
a_{N}+\Delta t\sum_{n=0}^{N}b_{n}\leq \exp \left( \Delta
t\sum_{n=0}^{N}\sigma _{n}\gamma _{n}\right) \left( \Delta
t\sum_{n=0}^{N}c_{n}+B\right) ,  \label{int5}
\end{equation}%
where for integers $n\geq 0$, $\sigma _{n}=(1-\Delta t\gamma _{n})^{-1}$, $%
\Delta t\gamma _{n}<1$, and $a_{n},b_{n},c_{n}$ and $B$ are positive numbers
such that%
\begin{equation}
a_{N}+\Delta t\sum_{n=0}^{N}b_{n}\leq \Delta t\sum_{n=0}^{N}\gamma
_{n}a_{n}+\Delta t\sum_{n=0}^{N}c_{n}+B.  \label{int6}
\end{equation}%
If the first term on the right hand side of (\ref{int6}) only extends up to $%
N-1$, then the inequality (\ref{int5}) holds for all $\Delta t>0$ with $%
\sigma _{n}\equiv 1$.

The paper is organized as follows. In Section 2, we introduce the method in
the framework of parabolic problems in evolving domains. The study of the
properties of the family of moving meshes and their associated finite
element spaces is presented in Section 3. Section 4 is devoted to the
stability and error analysis in the $L^{2}$-norm respectively. Some
numerical tests are presented in Section 5. Since the elements of the moving
meshes have, in general, $d$-curved faces, then, for the sake of
completeness of the paper, we collect some results of the theory of curved
elements developed by Bernardi \cite{Bern}, which are relevant for us, in an
Appendix.

\section{The nearly-conservative Lagrange-Galekin-(BDF-q) methods}

Returning to problem (\ref{int1}), we shall describe in this section the
procedure to apply the LG methods in order to calculate a numerical solution
and term such a procedure the nearly-conservative Lagrange Galerkin (NCLG)
methods. (Below, we point out the reason of the name nearly-conservative). To
this end, let $h$ be the mesh parameter such that $0<h<1$, at time $t=0,$ a
family of conforming, quasi-uniformly regular partitions $\ \left( \mathcal{T%
}_{h}\right) _{h}$ is generated on $\overline{\Omega }$; the partitions $%
\mathcal{T}_{h}$ are composed of curved $d-$simplices $T_{l}$ of order $k$
(see Appendix for the definition and properties of curved $d-$simplices of
order $k$). Let $NE>1$ and $M>1$ denote the number of elements and nodes
respectively in the partition, then
\begin{equation*}
\mathcal{T}_{h}=\left\{ T_{l}\right\} _{l=1}^{NE}\text{ \ \textrm{such that}
\ }\overline{\Omega }=\bigcup\limits_{l=1}^{NE}T_{l}.
\end{equation*}%
The partition $\mathcal{T}_{h}$ is said to be exact because $\overline{%
\Omega }=\bigcup\limits_{l=1}^{NE}T_{l}$. We allow the existence of curved
elements in the sense defined in \cite{Bern}, in particular the elements
that have a $(d-1)-$face which is a piece of the boundary $\partial \Omega $,
whereas the elements with only one vertex on the boundary or void
intersection with the boundary have plain $(d-1)-$faces. Our formulation of the
NCLG methods is in the framework of finite elements-backward differences formulas of
order $q$ (BDF-$q$) with $1\leq q\leq 6$. For $n$ integer and $n\geq q$, at
each time $t_{n}$ of the partition $\mathcal{P}_{\Delta t}$ we identify
every point $x$ of $\overline{\Omega }$ with a material particle moving with
velocity $u$ and trace backward its trajectory $X(x,t_{n};t)$ for $t\in
\lbrack t_{n-q},t_{n})$. Then, based on the uniqueness of the trajectories $%
X(x,t_{n};t)$, we can define an element $T_{l}(t)$ as the image of the
element $T_{l}\in \mathcal{T}_{h}$, i.e.,$T_{l}(t):=X(T_{l},t_{n};t)$.\ $%
T_{l}(t)\subset \overline{\Omega }$ because $X(x,t_{n};t)\in \overline{%
\Omega }$. Notice that for all $n\geq q$, $%
T_{l}(t_{n})=X(T_{l},t_{n};t_{n})=T_{l}\in \mathcal{T}_{h}$. Thus, for all $%
t\in \lbrack t_{n-q},t_{n})$ we can construct via the flow mapping $%
X(x,t_{n};t)$ a family of partitions $\left( \mathcal{T}_{h}(t)\right) _{h}$
where%
\begin{equation*}
\mathcal{T}_{h}(t):=\left\{ T_{l}(t)=X(T_{l},t_{n};t):\ T_{l}\in \mathcal{T}%
_{h},\ 1\leq l\leq NE\right\} .
\end{equation*}%
So that we set: for all $t_{n}\in \mathcal{P}_{\Delta t},\ t_{n}\geq t_{q}$,
and $\forall t\in \lbrack t_{n-q},t_{n})$
\begin{equation*}
\mathcal{T}_{h}(t)=\left\{ T_{l}(t)\right\} _{l=1}^{NE}\text{, \ \ \ \ \ }%
\overline{\Omega }_{h}(t)=\bigcup\limits_{l=1}^{NE}T_{l}(t).
\end{equation*}%
Due to the invariance of $\overline{\Omega }$ and given that for all $l$, $%
T_{l}(t)\subset \overline{\Omega }$, then $\overline{\Omega }_{h}(t)=%
\overline{\Omega }$. In Section 3 we study the properties of the
families of partitions $\left( \mathcal{T}_{h}\right) _{h}$ and $\left(
\mathcal{T}_{h}(t)\right) _{h}$. Now, we state the following regularity
result that is needed in the sequel, a proof of it can be found in (\cite[%
p.100]{Hart}).

\begin{lemma}
\label{solX} \textit{Assume that }$u\in C$\textit{\ }$([0,T];W^{k+1,\infty
}(\Omega )^{d}),\ k\geq 0$.\textit{\ Then for all }$n$ such that $q\leq $%
\textit{\ }$n\leq N$\textit{, there exists a unique solution }$t\rightarrow
X(x,t_{n};t)\ (t\in \lbrack t_{n-q},t_{n})\subset \lbrack 0,T])$\textit{\ of
(\ref{int2}) such that }$X(x,t_{n};t)\in C^{1}(0,T;W^{k+1,\infty }(\Omega
)^{d})$\textit{. Furthermore, let the multi-index }$\alpha \in N^{d}$\textit{%
, for all }$\alpha $,\textit{\ }$1\leq \mid \alpha \mid \leq k+1,$\textit{\ }%
$\displaystyle\frac{\partial ^{\left\vert \mathbf{\alpha }\right\vert
}X_{i}(x,t_{n},t)}{\partial x_{j}^{\alpha _{j}}}\in
C^{1}(0,T;W^{k+1-\left\vert \alpha \right\vert ,\infty }(\Omega ))$, $1\leq
i,j\leq d.$
\end{lemma}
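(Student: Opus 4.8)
The plan is to reduce the statement to the classical theory of ordinary differential equations---existence and uniqueness of the flow, and its differentiable dependence on the initial point (see \cite[p.\,100]{Hart})---and to track how the Sobolev indices propagate. First I would invoke the embedding $W^{k+1,\infty }(\Omega )\hookrightarrow C^{k,1}(\overline{\Omega })\subset C^{k}(\overline{\Omega })$, valid because $\Omega $ is a bounded Lipschitz domain; hence for every $t\in \lbrack 0,T]$ the field $u(\cdot ,t)$ is of class $C^{k}$ on $\overline{\Omega }$ with all its spatial derivatives of order $\le k$ bounded, uniformly in $t$, by $\Vert u\Vert _{C([0,T];W^{k+1,\infty }(\Omega )^{d})}$; in particular (using $k\ge 0$) $u(\cdot ,t)$ is globally Lipschitz on $\overline{\Omega }$ with a Lipschitz constant independent of $t$. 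The Cauchy--Lipschitz (Picard--Lindel\"of) theorem then gives, for each $x\in \overline{\Omega }$, a unique maximal solution $t\mapsto X(x,t_{n};t)$ of (\ref{int2}); by (\ref{int3}) this solution stays within distance $\vert t_{n}-t\vert \sup_{Q_{T}}\vert u\vert $ of $x$ and so cannot blow up, and since $u\cdot \nu =0$ on $\partial \Omega $ the set $\overline{\Omega }$ is invariant under the flow, as already recalled above around (\ref{int4}); therefore the solution is defined on all of $[t_{n-q},t_{n})\subset \lbrack 0,T]$. Continuity of $t\mapsto X(x,t_{n};t)$ together with continuity of $u$ in both arguments shows that $\dot{X}=u(X,t)$ is continuous in $t$, so $X(x,t_{n};\cdot )\in C^{1}$.

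Next I would obtain the spatial regularity by differentiating (\ref{int2}) in $x$ and analysing the resulting linear variational equations. One differentiation gives, for the Jacobian entries, $\frac{d}{dt}(\idpartial{X_{i}}{x_{j}})=\sum_{m=1}^{d}(\idpartial{u_{i}}{y_{m}})(X,t)\,(\idpartial{X_{m}}{x_{j}})$, with value $\delta _{ij}$ at $t=t_{n}$; since $\nabla u(\cdot ,t)\in W^{k,\infty }(\Omega )$ the coefficient is bounded, uniformly in $x$ and $t$, so Gronwall's inequality bounds $\idpartial{X}{x}$ in $L^{\infty }(\Omega )$ by a constant depending only on $\Vert u\Vert _{C([0,T];W^{k+1,\infty }(\Omega )^{d})}$ and $T$. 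Differentiating $\vert \alpha \vert $ times, $1\le \vert \alpha \vert \le k+1$, gives analogous linear ODEs for $\partial ^{\alpha }X_{i}$ whose right-hand sides are, by the Fa\`a di Bruno and Leibniz rules, finite sums of products of spatial derivatives of $u$ of order $\le \vert \alpha \vert $ evaluated at $X$---hence in $L^{\infty }(\Omega )$ since $u\in W^{k+1,\infty }$---times spatial derivatives of $X$ of order $<\vert \alpha \vert $, which are already controlled by induction on $\vert \alpha \vert $; Gronwall again yields a uniform $L^{\infty }(\Omega )$ bound, that is, $X(\cdot ,t_{n};t)\in W^{k+1,\infty }(\Omega )^{d}$ with norm bounded uniformly in $t$. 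Finally, reading each such equation once more, its right-hand side is a continuous function of $t$, whence $t\mapsto \partial ^{\alpha }X_{i}(\cdot ,t_{n};t)$ is $C^{1}$ with values in $W^{k+1-\vert \alpha \vert ,\infty }(\Omega )$; the case $\alpha =0$ gives $X\in C^{1}(0,T;W^{k+1,\infty }(\Omega )^{d})$.

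The one genuinely delicate point is the passage from the pointwise-in-$x$ differentiability supplied by the classical theorem to the uniform $W^{s,\infty }(\Omega )$ estimates and to the $C^{1}$-in-time statement with values in those spaces; one must verify that the Gronwall constants arising at each differentiation level are uniform over $x\in \Omega $ and $t\in \lbrack 0,T]$, which is exactly what the hypothesis $u\in C([0,T];W^{k+1,\infty }(\Omega )^{d})$ secures. For $k\ge 1$ the coefficients of the variational systems are genuinely continuous in $x$, so no additional care is needed; for $k=0$ one instead uses that the flow of a Lipschitz field is Lipschitz in $x$, hence differentiable a.e.\ in $x$ (Rademacher) with Jacobian in $L^{\infty }(\Omega )$, which is what $W^{1,\infty }(\Omega )$ asks for. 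All the remaining ingredients---local existence, uniqueness, and the invariance of $\overline{\Omega }$---have already been set up for (\ref{int2}) in the preceding text.
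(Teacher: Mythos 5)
The paper gives no proof of this lemma at all---it simply cites \cite[p.~100]{Hart} for the classical theory of existence, uniqueness and smooth dependence of the flow on the initial point---and your argument is precisely a reconstruction of that theory: Picard--Lindel\"of together with the invariance of $\overline{\Omega }$ (from $u\cdot \nu =0$) for global existence and uniqueness, then the linear variational equations, Gronwall's inequality and induction on $\left\vert \alpha \right\vert $ for the $W^{k+1,\infty }$ spatial regularity, so the proposal is correct and takes essentially the intended route. The only point you pass over lightly is the continuity in $t$, with values in $L^{\infty }(\Omega )$, of the top-order coefficient $D^{k+1}u(X(\cdot ,t_{n};t),t)$ arising at level $\left\vert \alpha \right\vert =k+1$ (composition with the time-varying flow is not automatically continuous into $L^{\infty }$), but this is a subtlety the paper itself does not address either.
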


The Jacobian determinant $J(x,t_{n};t)$ $:=\det \left( \frac{\partial
X_{i}(x,t_{n};t)}{\partial x_{j}}\right) $ of the transformation $%
X(x,t_{n};t)$ is, according to Lemma \ref{solX}, in $C
^{1}([0,T];W^{k,\infty }(\Omega ))$, this means that $t\rightarrow
J(x,t_{n};t) $ is a continuous function which does not vanish and $%
J(x,t_{n};t_{n})=1$, then $J(x,t_{n};t)>0$ and satisfies the equation \cite%
{CHM}%
\begin{equation}
\frac{\partial \ln (J)}{\partial t}=\nabla _{X}\cdot u(X(x,t_{n};t),t),\
J(X(x,t_{n};t_{n}))=1.\   \label{s2:0}
\end{equation}%
Hence%
\begin{equation}
J(x,t_{n};t)=\exp \left( -\int_{t}^{t_{n}}\nabla _{X}\cdot
u(X(x,t_{n};s),s)ds\right) .  \label{s2:1}
\end{equation}%
Given that $\left\Vert \nabla _{X}\cdot u\right\Vert _{L^{\infty }(Q_{T})}$
is bounded, we set $C_{\mathrm{div}}:=\left\Vert \nabla _{X}\cdot
u\right\Vert _{L^{\infty }(Q_{T})}$, and consequently it follows that
\begin{equation}
\exp (-C_{\mathrm{div}}\left\vert t-t_{n}\right\vert )\leq J(x,t_{n};t)\leq
\exp (C_{\mathrm{div}}\left\vert t-t_{n}\right\vert ).  \label{s2:2}
\end{equation}%
We remark that for all $n\geq q$ and $t$ taking values in the intervals $%
I_{n}:=[t_{n-q},t_{n}]$, one can also view $\Omega _{h}(t)$ as an evolving
domain defined by%
\begin{equation}
\forall n\geq q,\ \ \overline{\Omega }_{h}(t)=\left\{
\begin{array}{l}
\bigcup\limits_{l=1}^{NE}T_{l}(t)\ \ \mathrm{for\ \ }t_{n-q}\leq t<t_{n}, \\
\\
\overline{\Omega }=\bigcup\limits_{l=1}^{NE}T_{l}\ \ \mathrm{for\ \ }t=t_{n}.%
\end{array}%
\right.  \label{s2:20}
\end{equation}%
So, for the developments that follow, we adopt the methodology presented in
\cite{Alph} and \cite{ER} for parabolic equations in evolving domains. Thus,
for $t\in I_{n}$, let $\{Y(t)\}$ be a family of normed functional spaces of
real vector or scalar-valued functions defined on $\Omega _{h}(t)$, then we
can define a family of linear homeomorphisms $\phi
_{t_{n}}^{t}:Y(t_{n})\rightarrow Y(t)$, with inverse $\phi
_{t}^{t_{n}}:Y(t)\rightarrow Y(t_{n})$, via the mapping $X(x,t_{n};t)$ as%
\begin{equation}
v\in Y(t_{n})\rightarrow \phi _{t_{n}}^{t}v\in Y(t)\ \mathrm{such\ that}\
\left( \phi _{t_{n}}^{t}v\right) (\cdot )=v(X(\cdot ,t;t_{n})).
\label{s2:21}
\end{equation}%
Notice that $X(\cdot ,t;t_{n})$ is the inverse of $X(\cdot ,t_{n};t)$
because, for example, if $y=X(x,t_{n};t)\in \Omega _{h}(t)$, then $%
X(y,t;t_{n})=X(X(x,t_{n};t),t;t_{n})=x\in \Omega $. Based on Theorem 1.1.7/1
of \cite{Maz} and Lemma \ref{solX}, we can say that if the mapping $%
X(x,t_{n};t)$ is of class $C^{k,1}(\Omega )$ and $Y(t)=W^{k+1,p}(\Omega
_{h}(t))$, then $v\in Y(t_{n})=W^{k+1,p}(\Omega )$, $1\leq p\leq \infty $.
Moreover, there is a constant $c_{1}$ such that%
\begin{equation}
c_{1}^{-1}\left\Vert v\right\Vert _{W^{k+1,p}(\Omega )}\leq \left\Vert \phi
_{t_{n}}^{t}v\right\Vert _{W^{k+1,p}(\Omega _{h}(t))}\leq c_{1}\left\Vert
v\right\Vert _{W^{k+1,p}(\Omega )}  \label{s2:22}
\end{equation}%
and the map $t\rightarrow \left\Vert \phi _{t_{n}}^{t}v\right\Vert
_{W^{k+1,p}(\Omega (t))}$ is continuous for all $v\in W^{k+1,p}(\Omega )$. \
Now, setting%
\begin{equation*}
G_{I_{n}}=\bigcup_{t\in I_{n}}\Omega _{h}(t)\times \{t\},
\end{equation*}%
and using the notation $y=X(x,t_{n};t)\in \Omega _{h}(t)$, we can recast the
conservation law (\ref{int1}) restricted to the intervals $I_{n}$ as
\begin{equation}
\left\{
\begin{array}{l}
\displaystyle\frac{Dc}{Dt}-\mu \nabla _{y}\cdot \left( \nabla _{y}c\right)
+a_{0}c+c\nabla _{y}\cdot u=f(y,t)\text{\ \textrm{in}\ }G_{I_{n}}, \\
\\
\mu \nabla _{y}c\cdot \nu (y)=0\ \ \text{\textrm{on}\ \ }\partial \Omega
_{h}\times \lbrack t_{n-q},t_{n}),%
\end{array}%
\right.  \label{s2:3}
\end{equation}%
where%
\begin{equation}
\frac{Dc}{Dt}=\frac{\partial c(y,t)}{\partial t}+u(y,t)\cdot \nabla
_{y}c(y,t).  \label{s2:4}
\end{equation}%
Let $Y(t)$ be a Hilbert space, we introduce the spaces
\begin{equation*}
L_{(n,q)Y}^{2}:=\left\{ v:I_{n}\rightarrow \bigcup_{t\in I_{n}}Y(t)\times
\{t\},t\rightarrow (\overline{v}(t),t):\phi _{t}^{t_{n}}\overline{v}(t)\in
L^{2}(t_{n-q},t_{n};Y(t_{n}))\right\} .
\end{equation*}%
$L_{(n,q)Y}^{2}$ are separable Hilbert spaces with norm \cite{Alph}%
\begin{equation*}
\left\Vert v\right\Vert _{L_{(n,q)Y}^{2}}=\left(
\int_{t_{n-q}}^{t_{n}}\left\Vert v(t)\right\Vert _{Y(t)}^{2}dt\right) ^{1/2},
\end{equation*}%
and isomorphic to the spaces $L^{2}(t_{n-q},t_{n};Y(t_{n}))$. For any
integer $r\geq 0$, we also consider the space of smoothly evolving in time
functions

\begin{equation*}
C_{(n,q)Y}^{r}:=\left\{ v\in L_{(n,q)Y}^{2}:t\rightarrow \phi
_{t}^{t_{n}}v(t)\in C^{r}(\left[ t_{n-q},t_{n}\right] ;Y(t_{n}))\right\} .
\end{equation*}%
Thus, for $c\in C_{(n,q)Y}^{1}$ and $u\in C([0,T];W^{k+1,\infty
}(\Omega)^{d})$ one can write the strong material derivative as%
\begin{equation}
\frac{Dc}{Dt}=\phi _{t_{n}}^{t}\frac{d}{dt}\left( \phi _{t}^{t_{n}}c\right) .
\label{s2:5}
\end{equation}%
Notice that for $v\in Y(t)$, $\displaystyle\frac{Dv}{Dt}=0$ $%
\Longleftrightarrow \exists \widehat{v}=\phi _{t}^{t_{n}}v\in Y(t_{n})$ such
that $\displaystyle\frac{d\widehat{v}}{dt}=0$ \cite{Alph, ER}.

From (\ref{s2:3}), we obtain that\ for $c(t)\in C_{(n,q)Y}^{1}$ and $\forall
v\in Y(t)$
\begin{equation}
\int_{\Omega _{h}(t)}\left\{ \left( \frac{Dc}{Dt}+c\nabla _{y}\cdot u\right)
v+\mu \nabla _{y}c\cdot \nabla _{y}v+a_{0}cv-fv\right\} d\Omega _{h}(t)=0.
\label{s2:6}
\end{equation}%
Making use of the inverse homeomorphism $\phi _{t}^{t_{n}}:Y(t)\rightarrow
Y(t_{n})$, so that for all $z(X(x,t_{n},t),t)\in Y(t)$ there is one and only
one $\widehat{z}(x,t)\in Y(t_{n})$ such that $\widehat{z}%
(x,t)=z(X(x,t_{n};t),t)$, and the equation (\ref{s2:0}) of the Jacobian
determinant, we can recast (\ref{s2:6}) as follows:
\begin{equation}
\int_{\Omega }\frac{DJ\widehat{c}}{Dt}\widehat{v}d\Omega +\int_{\Omega
}\left( \mu \left( F^{-T}\nabla _{x}\widehat{c}\right) \cdot \left(
F^{-T}\nabla _{x}\widehat{v}\right) +a_{0}\widehat{c}\widehat{v}-\widehat{f}%
\widehat{v}\right) Jd\Omega =0,  \label{s2:7}
\end{equation}%
where $\widehat{c}:=\widehat{c}(x,t)\in C^{1}(\left[ t_{n-q},t_{n}\right]
;Y(t_{n}))$, and $F^{-T}$ denotes the transpose of the inverse Jacobian
matrix $F_{ij}=\left( \frac{\partial X_{i}(x,t_{n};t)}{\partial x_{j}}%
\right) $. Moreover, noting that%
\begin{equation*}
\left\{
\begin{array}{l}
\displaystyle\frac{d}{dt}\int_{\Omega _{h}(t)}c(y,t)v(y,t)d\Omega_{h} (t)=%
\displaystyle\frac{d}{dt}\int_{\Omega }\widehat{c}(x,t)\widehat{v}%
(x,t)Jd\Omega \\
\\
=\displaystyle\int_{\Omega }\frac{DJ\widehat{c}}{Dt}\widehat{v}d\Omega +%
\displaystyle\int_{\Omega }\frac{D\widehat{v}}{Dt}\widehat{c}Jd\Omega \ \ \
\ \ \ \ \ \ \ \ \ \ \ \ \ \ \ \ \ \mathrm{(by\ virtue\ of\ (\ref{s2:7}))} \\
\\
=-\displaystyle\int_{\Omega }\left( \mu \left( F^{-T}\nabla _{x}\widehat{c}%
\right) \cdot \left( F^{-T}\nabla _{x}\widehat{v}\right) +a_{0}\widehat{c}%
\widehat{v}-\widehat{f}\widehat{v}\right) Jd\Omega +\displaystyle%
\int_{\Omega }\frac{D\widehat{v}}{Dt}\widehat{c}Jd\Omega \\
\\
=-\displaystyle\int_{\Omega _{h}(t)}\left\{ \mu \nabla _{y}c\cdot \nabla
_{y}v+a_{0}cv-fv\right\} d\Omega _{h}(t)+\displaystyle\int_{\Omega _{h}(t)}%
\frac{Dv}{Dt}cd\Omega _{h}(t),%
\end{array}%
\right.
\end{equation*}%
it follows that (\ref{s2:6}) can also be written as%
\begin{equation}
\frac{d}{dt}\int_{\Omega _{h}(t)}cvd\Omega _{h}(t)+\int_{\Omega
_{h}(t)}\left( \mu \nabla _{y}c\cdot \nabla _{y}v+a_{0}cv-fv\right) d\Omega
_{h}(t)-\int_{\Omega _{h}(t)}c\frac{Dv}{Dt}d\Omega _{h}(t)=0.  \label{s2:8}
\end{equation}%
Hence, it is clear that (\ref{s2:7}) and (\ref{s2:8}) are equivalent. If we
identify the Hilbert spaces $Y(t)$ and $Y(t_{n})$ with the spaces $%
H^{1}(\Omega _{h}(t))$ and $H^{1}(\Omega )$ respectively, then
(\ref{s2:8}) is a weak formulation of (\ref{int1}) in the intervals
$[t_{n-q},t_{n}]$ provided that the initial condition $c^{0}\in
L^{2}(\Omega )$ and $f\in L^{2}(L^{2}(\Omega ))$. This weak
formulation satisfies the global mass property because if for $t\in
\lbrack t_{n-q},t_{n}]$ we let $v=1$, then, noting that
$\int_{\Omega
_{h}(t)}c(y,t)d\Omega _{h}(t)=\int_{\Omega }c(x,t)d\Omega $, from (\ref{s2:8}%
) it readily follows (\ref{int1.1}). Next, we describe the methods to
integrate (\ref{s2:8}) termed the NCLG methods. To this end, we
assume for the moment that the following properties hold:

(1) Given the family of exact, conforming, quasi-uniformly regular
partitions $\ \left( \mathcal{T}_{h}\right) _{h}$ of $\Omega $ composed of
curved $d-$simplices $T_{l}$ of order $k$, let $\widehat{T}$ be the reference
unit $d-$simplex and let the mapping $F_{l}:\widehat{T}\rightarrow T_{l}$ be
a diffeomorphism of class $C^{k+1}$; then, we can define the family of $%
H^{1}-$conforming finite element spaces
\begin{equation*}
\left\{
\begin{array}{l}
V_{h}:=\left\{ v_{h}\in C(\overline{\Omega }):\forall T_{l}\in \mathcal{T}%
_{h}\ \left. v_{h}\right\vert _{T_{l}}\in P(T_{l}),\ 1\leq l\leq NE\right\} ,
\\
\\
P(T_{l}):=\left\{ p(x):p(x)=\widehat{p}\circ F_{l}^{-1}(x),\ \widehat{p}\in
\widehat{P}_{k}(\widehat{T})\right\} ,%
\end{array}%
\right.
\end{equation*}
where $\left. v_{h}\right\vert _{T_{l}}$ denotes the restriction of $v_{h}$
to the element $T_{l}$, $\widehat{P}_{k}(\widehat{T})$ stands for the set of
polynomials of degree $\leq k,\ k$ being an integer $\geq 1$, defined on the
element of reference $\widehat{T}$. The $M-$dimensional set of global basis
functions of $V_{h}$ is denoted by $\left\{ \chi _{j}(x)\right\} _{j=1}^{M}$%
. Notice that $V_{h}\subset H^{1}(\Omega )$ because the family of partitions
$\left( \mathcal{T}_{h}\right) _{h}$ is exact.

(2) For all $n\geq q$, and $\forall t\in \lbrack t_{n-q},t_{n})$, the family
of partitions $\left( \mathcal{T}_{h}(t)\right) _{h}$, composed of curved $%
d- $simplices $T_{l}(t)$ of order $k$, is conforming, quasi-uniformly
regular and complete with respect to the domain $\Omega _{h}(t)$, and the
mapping $F_{l}(t):\widehat{T}\rightarrow T_{l}(t)$ is a diffeomorphism of
class $C^{k,1}$. Then, we can define the family of $H^{1}-$conforming finite
element spaces%
\begin{equation*}
\left\{
\begin{array}{l}
V_{h}(t):=\left\{ v_{h}\in C(\overline{\Omega }_{h}(t)):\forall T_{l}(t)\in
\mathcal{T}_{h}(t)\ \left. v_{h}\right\vert _{T_{l}(t)}\in P(T_{l}(t)),\
1\leq l\leq NE\right\} , \\
\\
P(T_{l}(t)):=\left\{ p(y):p(y)=\widehat{p}\circ F_{l}^{-1}(t)(y),\ \widehat{p%
}\in \widehat{P}_{k}(\widehat{T})\right\} .%
\end{array}%
\right.
\end{equation*}
Setting $y=X(x,t_{n};t)$, the $M-$dimensional set of global basis functions
of $V_{h}(t)$ is the set $\left\{ \chi _{j}(X(x,t_{n};t)\right\} _{j=1}^{M}$%
. Also, notice that $V_{h}(t)\subset H^{1}(\Omega _{h}(t))$. Since $\chi
_{j}(X(x,t_{n};t))=\phi _{t_{n}}^{t}\chi _{j}(x)$, then by virtue of (\ref%
{s2:5})
\begin{equation}
\frac{D\chi _{j}(X(x,t_{n};t))}{Dt}=\phi _{t_{n}}^{t}\frac{d}{dt}\left( \chi
_{j}(x)\right) =0\Longrightarrow \forall t\in \left[ t_{n-q},t_{n}\right] ,\
\chi _{j}(X(x,t_{n};t))=\chi _{j}(x).  \label{s2:9}
\end{equation}%
Now, we can then apply the Galerkin method to (\ref{s2:8}) in order to
calculate $c_{h}(t)\in V_{h}(t)$ by choosing test functions $v_{h}(t)\in
V_{h}(t)$ that satisfy $\frac{Dv_{h}}{Dt}=0$. Thus, for $t\in I_{n}$ and for
each $x\in \overline{\Omega }$, let $X(x,t_{n};t)\in \overline{\Omega}%
_{h}(t) $, we set
\begin{equation}
v_{h}(t)=v_{h}(X(x,t_{n};t)):=\sum_{j=1}^{M}V_{j}\chi _{j}(X(x,t_{n};t)).\
\label{s2:92}
\end{equation}%
Note that in view of (\ref{s2:9}) it follows that for $t\in I_{n},\
v_{h}(t)=v_{h}(t_{n})=\sum_{j=1}^{M}V_{j}\chi _{j}(x)\in V_{h}$. The
Galerkin formulation for (\ref{s2:8}) yields
\begin{equation}
\frac{d}{dt}\int_{\Omega _{h}(t)}c_{h}v_{h}d\Omega _{h}(t)+\int_{\Omega
_{h}(t)}\left( \mu \nabla _{y}c_{h}\cdot \nabla
_{y}v_{h}+a_{0}c_{h}v_{h}-fv_{h}\right) d\Omega _{h}(t)=0.  \label{s2:10}
\end{equation}
To simplify the writing of the formulas that appear in the sequel, we
introduce the bilinear form $a:H^{1}(\Omega )\times H^{1}(\Omega
)\rightarrow \mathbb{R}$
\begin{equation}
a(u,v):=\int_{\Omega }\left( \mu \nabla u\cdot \nabla v+a_{0}uv\right)
d\Omega .  \label{se2:101}
\end{equation}
$a(\cdot ,\cdot )$ is symmetric, continuous and coercive. The BDF-q
discretization of (\ref{s2:10}) reads as follows. Assuming we are given the
starting approximations $c_{h}^{0},\ldots ,c_{h}^{q-1}\in V_{h}$, for each $%
n,\ q\leq n\leq N$, we calculate $c_{h}^{n}$ $\in V_{h}$ as the solution of
the equation
\begin{equation}
\frac{1}{\Delta t}\sum_{i=0}^{q}\alpha _{i}\left(
c_{h}^{n-i},v_{h}^{n-i}\right) _{\Omega
_{h}(t_{n-i})}+a(c_{h}^{n},v_{h})=(f^{n},v_{h})_{\Omega }\ \ \forall
v_{h}\in V_{h}.  \label{s2:11}
\end{equation}
Here, we employ the following notations:
\begin{equation}
\left\{
\begin{array}{l}
v_{h}^{n-i}:=v_{h}(X(x,t_{n};t_{n-i}))=\sum_{j=1}^{M}V_{j}\chi
_{j}(X(x,t_{n};t_{n-i})), \\
\\
\left( c_{h}^{n-i},v_{h}^{n-i}\right) _{\Omega _{h}(t_{n-i})}:=\displaystyle%
\int_{\Omega _{h}(t_{n-i})}c_{h}^{n-i}(X(x,t_{n};t_{n-i}))v_{h}^{n-i}d\Omega
_{h}(t_{n-i}),%
\end{array}%
\right.  \label{s2:12}
\end{equation}%
$c_{h}^{n-i}(X(x,t_{n};t_{n-i}))$ denotes the value that $c_{h}^{n-i}\in
V_{h}$ takes at the point $X(x,t_{n};t_{n-i})$.

\begin{remark}
\label{initval} Setting $c_{h}^{0}=c^{0}$, the choice of the $q-1$
values $c_{h}^{1},\ldots ,c_{h}^{q-1}$ corresponds to applying
explicit Runge-Kutta schemes of order $O(\Delta t^{q-1})$ to the
finite element formulation of problem (\ref{int1}) with time step
$\Delta t/2$ since it is only a small number of time steps.
\end{remark}

\begin{remark}
\label{tabata} If in each interval $I_{n}$ one applies the Galerkin method
combined with the BDF-q scheme to (\ref{s2:7}), and takes into account that:
$J^{n,n}:=J(x,t_{n},t_{n})=1$, $\widehat{c}_{h}^{n}:=c_{h}^{n}(x)$,\ $\widehat{c}
_{h}^{n-i}:=c_{h}^{n-i}(X(x,t_{n};t_{n-i}))$ and the Jacobian matrix $F$
is the identity matrix when $t=t_{n}$, it results the equation%
\begin{equation}
\frac{1}{\Delta t}\sum_{i=0}^{q}\alpha _{i}\left( J^{n,n-i}\widehat{c}%
_{h}^{n-i},v_{h}\right) _{\Omega }+a(c_{h}^{n},v_{h})=(f^{n},v_{h})_{\Omega
}\ \ \forall v_{h}\in V_{h}.  \label{s2:13}
\end{equation}%
This is the formulation used by \cite{RT2}, when $q=1$, and \cite{FK} when $%
q=2$.
\end{remark}

\section{On the partitions $\mathcal{T}_{h}$ and$\ \mathcal{T}_{h}(t)$ and
their associated finite element spaces $V_{h}$ and $V_{h}(t)$ respectively}

In this section we study some properties of the partitions $\mathcal{T}_{h}$
and $\mathcal{T}_{h}(t)$ and characterize their respective associated finite
element spaces $V_{h}$ and $V_{h}(t)$. We also describe the procedure to
approximate the integrals $\left( c_{h}^{n-i},v_{h}^{n-i}\right) _{\Omega
(t_{n-i})}$ of (\ref{s2:11}). As we said in Section 2, at time $t=0$, we
construct a family of partitions $(\mathcal{T}_{h})_{h}$ on the bounded
region $\Omega \subset \mathbb{R}^{d}$ with curved piecewise smooth boundary
$\partial \Omega $. The partitions consist of $d-$ simplices such that
\begin{equation*}
\mathcal{T}_{h}=\left\{ T_{l}\right\} _{l=1}^{NE}\text{ \ \textrm{and}\ \ }%
\overline{\Omega }=\bigcup\limits_{l=1}^{NE}T_{l}.
\end{equation*}%
The partitions are said to be exact because $\overline{\Omega }
=\bigcup\limits_{l=1}^{NE}T_{l}$. The elements that have at most one vertex
on $\partial \Omega $ are known as interior elements and are straight, the
other, known as boundary elements, share either a $(d-1)-$face with $%
\partial \Omega $ or and edge. See, for instance, \cite{Bern} and \cite{ER1} on the construction of
exact \ partitions on domains with curved piecewise smooth
boundaries. In this paper, we allow the existence of curved elements
in the sense defined in \cite{Bern} (see Appendix: Definitions
\ref{definition1} and \ref{definition1.5} and Lemma
\ref{lemma1_apen}
); thus, if $T_{l}$ is a curved $d-$simplex of class $C^{k+1}$ and $%
\widehat{T}$ is the unit reference $d-$simplex, then there exists a mapping $%
F_{l}:\widehat{T}\rightarrow T_{l}$ which is a $C^{k+1}$- diffeomorphism and
such that
\begin{equation}
F_{l}=\overline{F}_{l}+\Theta _{l},  \label{se2:81}
\end{equation}%
where $\overline{F}_{l}$ is an invertible affine mapping from $\widehat{T}$
onto the straight $d-$simplex $\overline{T}_{l}$ the vertices of which are
the vertices $\{x_{j}^{(l)}\}_{j=1}^{d+1}$ of the element $T_{l}$, see
Figure 1, thus,

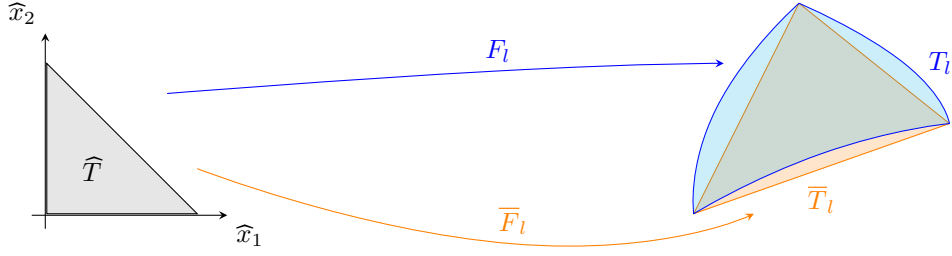
\begin{figure}[ht]
    \centering
    \begin{tikzpicture}[scale=2.0, >=stealth]

        \draw[black,fill=black!10!white!]   (0,0) -- (1,0) -- (0,1) -- (0,0);
        \draw[->] (-0.1,-0.01) -- (1.2,-0.01) node[below right]{$\widehat{x}_1$};
        \draw[->] (-0.01,-0.1) -- (-0.01,1.2) node[above left]{$\widehat{x}_2$};

        \coordinate (v1) at (6,0.6);
        \coordinate (v2) at (5,1.4);
        \coordinate (v3) at (4.3,0.0);
        \coordinate (v4) at ($0.5*(v1)+0.5*(v2)+(0.2,0.0)$);
        \coordinate (v5) at ($0.5*(v2)+0.5*(v3)+(-0.2,0.0)$);
        \coordinate (v6) at ($0.5*(v3)+0.5*(v1)+(0.0,0.1)$);
        \coordinate (vbar) at ($0.333*(v1)+0.333*(v2)+0.333*(v3)$);

        \draw[orange, fill=orange, fill opacity=0.2] (v1) -- (v2) -- (v3) -- (v1);

        \draw[blue] plot[smooth, tension=1] coordinates{(v1) (v4) (v2)}
        plot[smooth, tension=1] coordinates{(v2) (v5) (v3)}
        plot[smooth, tension=1] coordinates{(v3) (v6) (v1)};
        \fill[cyan, fill opacity=0.2] (vbar) -- plot[smooth, tension=1] coordinates{(v1) (v4) (v2)}
        (vbar) -- plot[smooth, tension=1] coordinates{(v2) (v5) (v3)}
        (vbar) -- plot[smooth, tension=1] coordinates{(v3) (v6) (v1)};

        \node[below=0.1cm]() at ($0.5*(v3)+0.5*(v1)$) {\textcolor{orange}{$\overline{T}_l$}};
        \node[right=0.2cm]() at (v4) {\textcolor{blue}{$T_l$}};
        \node[]() at (0.3,0.3) {\textcolor{black}{$\widehat{T}$}};

        \draw[blue,->] plot[smooth, tension=1] coordinates{(0.8,0.8) (3, 0.95) (4.5,1.0)};
        \node[above]() at (3,0.95) {\textcolor{blue}{$F_l$}};
        \draw[orange,->] plot[smooth, tension=1] coordinates{(1.0,0.3) (3.1, -0.2) (4.7,0.0)};
        \node[above]() at (3.1,-0.2) {\textcolor{orange}{$\overline{F}_l$}};

    \end{tikzpicture}
    \caption {\ The straight simplex $\overline{T}_{l}$ is the affine image of the
simplex $\widehat{T}$, whereas the blue curved simplex
$T_{l}=F_{l}(\widehat{T})$.}
\end{figure}
 \label{fig1}

\begin{equation}
\overline{F}_{l}(\widehat{x})=\overline{B}_{l}\widehat{x}+\overline{a}_{l},\
\ \overline{B}_{l}\in \mathcal{L}(\mathbb{R}^{d})\ \ \mathrm{and\ \ }%
\overline{a}_{l}\in \mathbb{R}^{d}\text{;}  \label{se2:82}
\end{equation}%
and\ $\Theta _{l}:\widehat{T}\rightarrow \mathbb{R}^{d}$ is a $C^{k+1}$
mapping that satisfies the bound%
\begin{equation}
c_{_{T_{l}}}=\sup_{\widehat{x}}\left\Vert D\Theta _{l}(\widehat{x})\overline{%
B}_{l}^{-1}\right\Vert <1.  \label{se2:83}
\end{equation}%
Moreover, for $2\leq s\leq k+1$, there are constants $c_{s}$ and $c_{-s}$,
the latter depending continuously on $c_{_{T_{l}}},\ c_{2}(T_{l}),\ldots
,c_{k+1}(T_{l})$, such that
\begin{equation}
\left\{
\begin{array}{l}
c_{s}(T_{l})=\sup_{\widehat{x}\in \widehat{T}}\left\Vert D^{s}F_{l}(\widehat{%
x})\right\Vert \cdot \left\Vert \overline{B}_{l}\right\Vert ^{-s}, \\
\\
\sup_{x\in T}\left\Vert D^{s}F_{l}^{-1}(x)\right\Vert \leq c_{-s}\left\Vert
\overline{B}_{l}\right\Vert ^{2(s-1)}\left\Vert \overline{B}%
_{l}^{-1}\right\Vert ^{s}.%
\end{array}%
\right.  \label{se2:84}
\end{equation}
We further assume that the curved finite elements $(T_{l},P(T_{l}),\Sigma
_{T_{l}})$ associated with the family of partitions $\left( \mathcal{T}%
_{h}\right) _{h}$ are $H^{1}-$conforming (see Appendix: Definitions \ref
{definition4} and \ref{definition5}); so, we consider the $H^{1}$-conforming
finite element space $V_{h}\subset H^{1}(\Omega )$ associated with $\mathcal{%
T}_{h}$, i.e.,
\begin{equation}
\left\{
\begin{array}{l}
V_{h}:=\left\{ v_{h}\in C(\overline{\Omega }):\forall T_{l}\in \mathcal{T}%
_{h}\ \left. v_{h}\right\vert _{T_{l}}\in P(T_{l}),\ 1\leq l\leq NE\right\} ,
\\
\\
P(T_{l}):=\left\{ p(x):p(x)=\widehat{p}\circ F_{l}^{-1}(x),\ \widehat{p}\in
\widehat{P}_{k}(\widehat{T})\right\} ,%
\end{array}%
\right.  \label{se2:85}
\end{equation}
where $\left. v_{h}\right\vert _{T_{l}}$ denotes the restriction of $v_{h}$
to the element $T_{l}$, $\widehat{P}_{k}(\widehat{T})$ stands for the set of
polynomials of degree $\leq k,\ k$ being an integer $\geq 1$, defined on the
element of reference $\widehat{T}$. The $M-$dimensional set of global basis
functions of $V_{h}$ is denoted by $\left\{ \chi _{j}(x)\right\} _{j=1}^{M}$%
. Note that if $T_{l}$ is a straight $d-$simplex, then $\Theta _{l}(\widehat{%
x})\in \widehat{P}_{k}(\widehat{T})$ and $F_{l}$ is thus the standard
regular isoparametric transformation of type$-(k)$ as defined in \cite{Ci}.
The approximation properties of the finite element space $V_{h}$ are stated
in Theorems \ref{theorem3} and \ref{corollary1} of the Appendix.

We now turn to study the partitions $\mathcal{T}_{h}(t)$ when $t\in I_{n}$
and $q\leq n\leq N$. As stated above, the partitions $\mathcal{T}_{h}(t)$
are composed of elements $T_{l}(t)$ with curved $d-$ faces, such that $%
T_{l}(t):=X(T_{l},t_{n};t)$, $T_{l}\in \mathcal{T}_{h}$, and by virtue of
Lemma \ref{solX} for each $T_{l}(t)$ there is one and only one $T_{l}$. In
general, $T_{l}(t)$ may intersect several elements $T_{l}$ of the fixed
partition $\mathcal{T}_{h}$. See Figure 2.

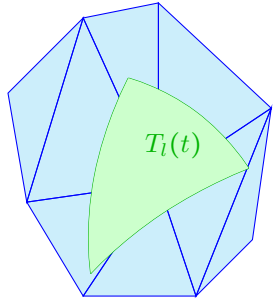
\begin{figure}[h]
    \centering
    \begin{tikzpicture}[scale=1.0, >=stealth]

        \begin{scope}[xshift=-1cm, yshift=-1cm]
        \coordinate (N1) at (0,1.2);
        \coordinate (N2) at (1,2.2);
        \coordinate (N3) at (2,2.4);
        \coordinate (N4) at (0.25,-0.25);
        \coordinate (N5) at (2.0,0.0);
        \coordinate (N6) at (3.5,1.0);
        \coordinate (N7) at (1.0,-1.5);
        \coordinate (N8) at (2.5,-1.5);
        \coordinate (N9) at (3.25,-0.75);
        \end{scope}
        \draw[blue,fill=cyan,fill opacity=0.2] (N1) -- (N4) -- (N2) -- cycle;
        \draw[blue,fill=cyan,fill opacity=0.2] (N2) -- (N4) -- (N5) -- cycle;
        \draw[blue,fill=cyan,fill opacity=0.2] (N5) -- (N3) -- (N2) -- cycle;
        \draw[blue,fill=cyan,fill opacity=0.2] (N5) -- (N6) -- (N3) -- cycle;
        \draw[blue,fill=cyan,fill opacity=0.2] (N4) -- (N7) -- (N5) -- cycle;
        \draw[blue,fill=cyan,fill opacity=0.2] (N5) -- (N7) -- (N8) -- cycle;
        \draw[blue,fill=cyan,fill opacity=0.2] (N5) -- (N8) -- (N6) -- cycle;
        \draw[blue,fill=cyan,fill opacity=0.2] (N6) -- (N8) -- (N9) -- cycle;

        \coordinate (w1) at (2.2,-0.8);
        \coordinate (w2) at (0.6,0.4);
        \coordinate (w3) at (0.1,-2.2);
        \coordinate (w4) at ($0.5*(w1)+0.5*(w2)+(0.1,0.15)$);
        \coordinate (w5) at ($0.5*(w2)+0.5*(w3)+(-0.2,0.0)$);
        \coordinate (w6) at ($0.5*(w3)+0.5*(w1)+(-0.1,0.1)$);
        \coordinate (wbar) at ($0.333*(w1)+0.333*(w2)+0.333*(w3)$);
        \draw[darkgreen] plot[smooth, tension=1] coordinates{(w1) (w4) (w2)}
        plot[smooth, tension=1] coordinates{(w2) (w5) (w3)}
        plot[smooth, tension=1] coordinates{(w3) (w6) (w1)};
        \fill[green!20!white!, fill opacity=1.0] (wbar) -- plot[smooth, tension=1] coordinates{(w1) (w4) (w2)}
        (wbar) -- plot[smooth, tension=1] coordinates{(w2) (w5) (w3)}
        (wbar) -- plot[smooth, tension=1] coordinates{(w3) (w6) (w1)};
        \node[]() at (1.2,-0.5) {\textcolor{darkgreen}{$T_{l}(t)$}};
    \end{tikzpicture}
    \caption{{\ The green curved element $T_{l}(t) \in \mathcal{T}_{h}(t)$ may intersect several elements (blue) of the
fixed partition $\mathcal{T}_{h}$.}}
\end{figure}
\label{Fig2}

Next, by virtue of Lemma \ref{solX}, we define a family of quasi-isometric
mappings $F_{l}(t):\widehat{T}\rightarrow T_{l}(t)$ such that for each $\widehat{x}\in \widehat{T}$ there is one and only one $y$ in $T_{l}(t)$ defined by
\begin{equation}
\widehat{x}\rightarrow y=F_{l}(t)(\widehat{x})=X(\cdot ,t_{n};t)\circ F_{l}(%
\widehat{x})\in T_{l}(t),  \label{se3:1}
\end{equation}
where $F_{l}$ is the mapping defined in (\ref{se2:81}). We will show below
that $F_{l}(t)$ is a mapping of class $C^{k,1}$, see Figure \ref{Fig3}
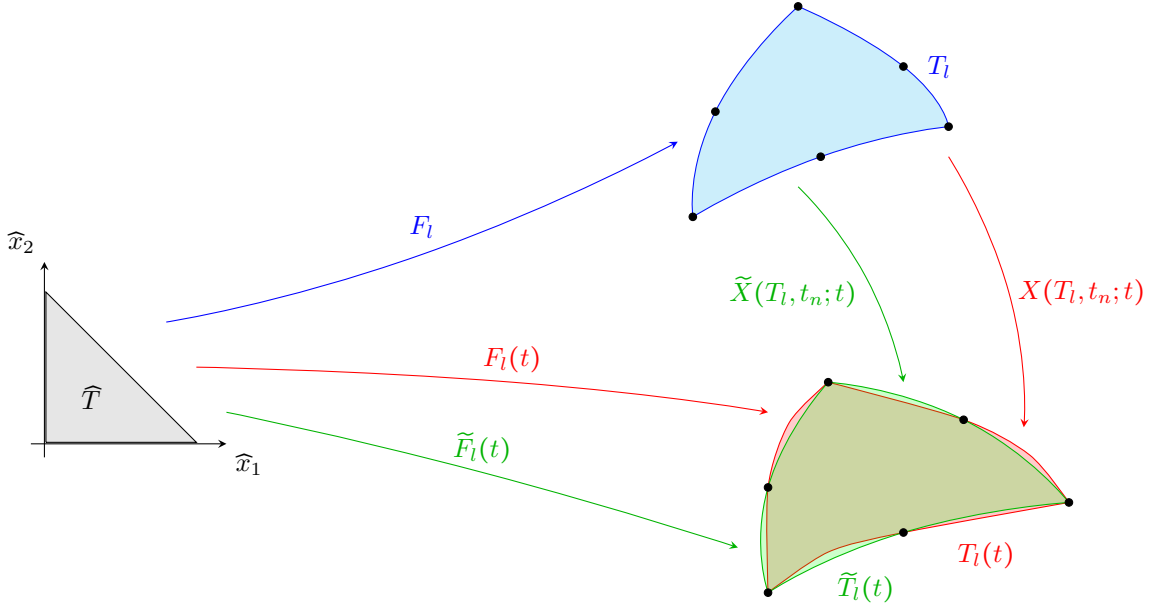
\begin{figure}[ht]
    \centering
    \begin{tikzpicture}[scale=2.0, >=stealth]

        \draw[black,fill=black!10!white!]   (0,0) -- (1,0) -- (0,1) -- (0,0);
        \draw[->] (-0.1,-0.01) -- (1.2,-0.01) node[below right]{$\widehat{x}_1$};
        \draw[->] (-0.01,-0.1) -- (-0.01,1.2) node[above left]{$\widehat{x}_2$};

        \coordinate (v1) at (6,2.1);
        \coordinate (v2) at (5,2.9);
        \coordinate (v3) at (4.3,1.5);
        \coordinate (v4) at ($0.5*(v1)+0.5*(v2)+(0.2,0.0)$);
        \coordinate (v5) at ($0.5*(v2)+0.5*(v3)+(-0.2,0.0)$);
        \coordinate (v6) at ($0.5*(v3)+0.5*(v1)+(0.0,0.1)$);
        \coordinate (vbar) at ($0.333*(v1)+0.333*(v2)+0.333*(v3)$);
        \draw[blue] plot[smooth, tension=1] coordinates{(v1) (v4) (v2)}
        plot[smooth, tension=1] coordinates{(v2) (v5) (v3)}
        plot[smooth, tension=1] coordinates{(v3) (v6) (v1)};
        \fill[cyan, fill opacity=0.2] (vbar) -- plot[smooth, tension=1] coordinates{(v1) (v4) (v2)}
        (vbar) -- plot[smooth, tension=1] coordinates{(v2) (v5) (v3)}
        (vbar) -- plot[smooth, tension=1] coordinates{(v3) (v6) (v1)};

        \coordinate (w1) at (6.8,-0.4);
        \coordinate (w2) at (5.2,0.4);
        \coordinate (w3) at (4.8,-1.0);
        \coordinate (w4) at ($0.5*(w1)+0.5*(w2)+(0.1,0.15)$);
        \coordinate (w5) at ($0.5*(w2)+0.5*(w3)+(-0.2,0.0)$);
        \coordinate (w6) at ($0.5*(w3)+0.5*(w1)+(-0.1,0.1)$);
        \coordinate (w14) at ($0.5*(w1)+0.5*(w4)+(0.09,0.04)$);
        \coordinate (w25) at ($0.5*(w2)+0.5*(w5)+(-0.06,0.08)$);
        \coordinate (w36) at ($0.5*(w3)+0.5*(w6)+(-0.05,0.07)$);
        \coordinate (wbar) at ($0.333*(w1)+0.333*(w2)+0.333*(w3)$);
        \draw[red] plot[smooth, tension=0.5] coordinates{(w1) (w14) (w4) (w2)}
        plot[smooth, tension=0.5] coordinates{(w2) (w25) (w5) (w3)}
        plot[smooth, tension=0.5] coordinates{(w3) (w36) (w6) (w1)};
        \fill[red, fill opacity=0.2] (wbar) -- plot[smooth, tension=0.5] coordinates{(w1) (w14) (w4) (w2)}
        (wbar) -- plot[smooth, tension=0.5] coordinates{(w2) (w25) (w5) (w3)}
        (wbar) -- plot[smooth, tension=0.5] coordinates{(w3) (w36) (w6) (w1)};

        \draw[darkgreen] plot[smooth, tension=1] coordinates{(w1) (w4) (w2)}
        plot[smooth, tension=1] coordinates{(w2) (w5) (w3)}
        plot[smooth, tension=1] coordinates{(w3) (w6) (w1)};
        \fill[green, fill opacity=0.2] (wbar) -- plot[smooth, tension=1] coordinates{(w1) (w4) (w2)}
        (wbar) -- plot[smooth, tension=1] coordinates{(w2) (w5) (w3)}
        (wbar) -- plot[smooth, tension=1] coordinates{(w3) (w6) (w1)};

        \node[draw,circle,fill=black,inner sep=0pt,minimum size=0.10cm,label=right:]() at (v1) {};
        \node[draw,circle,fill=black,inner sep=0pt,minimum size=0.10cm,label=right:]() at (v2) {};
        \node[draw,circle,fill=black,inner sep=0pt,minimum size=0.10cm,label=right:]() at (v3) {};
        \node[draw,circle,fill=black,inner sep=0pt,minimum size=0.10cm,label=right:]() at (v4) {};
        \node[draw,circle,fill=black,inner sep=0pt,minimum size=0.10cm,label=right:]() at (v5) {};
        \node[draw,circle,fill=black,inner sep=0pt,minimum size=0.10cm,label=right:]() at (v6) {};
        \node[draw,circle,fill=black,inner sep=0pt,minimum size=0.10cm,label=right:]() at (w1) {};
        \node[draw,circle,fill=black,inner sep=0pt,minimum size=0.10cm,label=right:]() at (w2) {};
        \node[draw,circle,fill=black,inner sep=0pt,minimum size=0.10cm,label=right:]() at (w3) {};
        \node[draw,circle,fill=black,inner sep=0pt,minimum size=0.10cm,label=right:]() at (w4) {};
        \node[draw,circle,fill=black,inner sep=0pt,minimum size=0.10cm,label=right:]() at (w5) {};
        \node[draw,circle,fill=black,inner sep=0pt,minimum size=0.10cm,label=right:]() at (w6) {};
        %

        \node[right=0.2cm]() at (v4) {\textcolor{blue}{$T_l$}};
        \node[]() at (0.3,0.3) {\textcolor{black}{$\widehat{T}$}};
        \node[below=0.2cm]() at ($0.5*(w1)+0.5*(w6)$) {\textcolor{red}{$T_l(t)$}};
        \node[below=0.0cm]() at ($0.5*(w3)+0.5*(w6)+(0.2,0)$) {\textcolor{darkgreen}{$\widetilde{T}_l(t)$}};

        \draw[blue,->] plot[smooth, tension=1] coordinates{(0.8,0.8) (2.5, 1.25) (4.2,2.0)};
        \node[above=0.1cm]() at (2.5,1.25) {\textcolor{blue}{$F_l$}};
        \draw[red,->] plot[smooth, tension=1] coordinates{(1.0,0.5) (3.1, 0.4) (4.8,0.2)};
        \node[above]() at (3.1,0.4) {\textcolor{red}{$F_l(t)$}};
        \draw[darkgreen,->] plot[smooth, tension=1] coordinates{(1.2,0.2) (2.9, -0.2) (4.6,-0.7)};
        \node[above]() at (2.9,-0.2) {\textcolor{darkgreen}{$\widetilde{F}_l(t)$}};

        \draw[darkgreen,->] plot[smooth, tension=1] coordinates{(5,1.7) (5.45,1.1) (5.7,0.4)};
        \node[left]() at (5.45,1.0) {\textcolor{darkgreen}{$\widetilde{X}(T_l, t_n; t)$}};
        \draw[red,->] plot[smooth, tension=1] coordinates{(6,1.9) (6.4,1.0) (6.5,0.1)};
        \node[right]() at (6.4,1.0) {\textcolor{red}{$X(T_l, t_n; t)$}};

    \end{tikzpicture}
    \caption{{\ The blue simplex $T_{l}$ belongs to fixed mesh
    $\mathcal{T}_{h}$; the red simplex $T_{l}(t)$ belong to the mesh
    $\mathcal{T}_{h}(t)$ and is the image of the blue simplex
    $T_{l}$ by the mapping $X(x,t_{n};t)$; the green simplex
    $\widetilde{T}_{l}(t)$ is the isoparametric simplex that
    approximates the red element $T_{l}(t)$.}}
\end{figure}
 \label{Fig3}. In addition to the family
of partitions $(\mathcal{T}_{h}(t))_{h}$, we shall also work with another
family of partitions denoted by $(\widetilde{\mathcal{T}}_{h}(t))_{h}$, the
elements of which are isoparametric simplices of type $\mathrm{(k)}$, $\widetilde{T}_{l}(t)$, which are constructed as follows, see Figure \ref{Fig3}. For $1\leq l\leq NE$ and $t\in \lbrack t_{n-q},t_{n})$, let the
mapping $\widetilde{F}_{l}(t):\widehat{T}\rightarrow \mathbb{R}^{d}$ be
defined as
\begin{equation}
\begin{array}{c}
\widehat{x}\rightarrow \widetilde{y}(t)=\widetilde{F}_{l}(t)(\widehat{x}%
)=\sum_{j=1}^{NN}F_{l}(t)(\widehat{x}_{j})\widehat{\chi }_{j}(\widehat{x})\
\ \left( \mathrm{by\ virtue\ of\ (\ref{se3:1})}\right) \\
\\
\ \ \ \ \ \ \ \ \ \ \ \ \ \ \ =\sum_{j=1}^{NN}X(\cdot ,t_{n};t)\circ F_{l}(\widehat{x}_{j})\widehat{\chi }
_{j}(\widehat{x}),%
\end{array}
\label{se3:5}
\end{equation}
where\ $\left\{ \widehat{x}_{j}\right\} _{j=1}^{NN}$ and $\left\{ \widehat{%
\chi }_{j}(\widehat{x})\right\} _{j=1}^{NN}$ are the nodes of the element of
reference $\widehat{T}$ and the set of basis functions of $\widehat{P}_{k}(%
\widehat{T})$, respectively. Noticing that $X(x_{j}^{(l)},t_{n};t)=X(\cdot
,t_{n};t)\circ F_{l}(\widehat{x}_{j})$, where $x_{j}^{(l)}$ is the jth node
of the element $T_{l}$, then the points, $X(x_{j}^{(l)},t_{n};t)$, the
images of the nodes of $T_{l}$ by the mapping $X(x,t_{n};t)$, are the nodes
of $\widetilde{T}_{l}(t)$. Hence, if we denote by $\widehat{I}_{k}\ $the
Lagrange interpolant of degree $k$ in the space $C(\widehat{T})$ of
continuous functions defined on $\widehat{T}$, we can set that for all $t\in
\lbrack t_{n-q},t_{n})$ and $1\leq l\leq NE$,
\begin{equation}
\widetilde{F}_{l}(t)(\widehat{x})=\widehat{I}_{k}F_{l}(t)(\widehat{x})\ \
\mathrm{and\ \ }\widetilde{T}_{l}(t)=\widetilde{F}_{l}(t)(\widehat{T}).
\label{se3:6}
\end{equation}%
Notice that
\begin{equation*}
\widetilde{\mathcal{T}}_{h}(t)=\left\{ \widetilde{T}_{l}(t)\right\}
_{l=1}^{NE}\text{, \ \ \ }\overline{\widetilde{\Omega }}_{h}(t)=\bigcup%
\limits_{l=1}^{NE}\widetilde{T}_{l}(t).
\end{equation*}
Making use of both (\ref{int3}) and (\ref{se3:1}) we can also express $F_{l}(t)(\widehat{x})$ as
\begin{equation}
F_{l}(t)(\widehat{x})=F_{l}(\widehat{x})-\int_{t}^{t_{n}}u(X(F_{l}(\widehat{x%
}),t_{n};\tau ),\tau )d\tau .  \label{se3:7}
\end{equation}
Following the approach of \cite{Bern} and (\cite{Ci}, Chapter 4.3) for the
theory of isoparametric elements, we shall decompose $\widetilde{F}_{l}(t)(\widehat{x})$ as
\begin{equation}
\widetilde{F}_{l}(t)(\widehat{x})=\overline{F}_{l}(t)(\widehat{x})+%
\widetilde{\Theta }_{l}(t)(\widehat{x}),  \label{se3:9}
\end{equation}%
where $\overline{F}_{l}(t)$ is an invertible affine mapping from $\widehat{T}
$ onto the straight $d-$simplex $\overline{T}_{l}(t)$ the vertices of which
are the points $X(x_{j}^{(l)},t_{n};t),\ 1\leq j\leq d+1,$ $x_{j}^{(l)}$ being the
vertices of the element $T_{l}$; thus,

\begin{equation}
\overline{F}_{l}(t)(\widehat{x})=\overline{B}_{l}(t)\widehat{x}+\overline{b}%
_{l}(t),\ \ \overline{B}_{l}(t)\in \mathcal{L}(\mathbb{R}^{d})\ \mathrm{and\
}\overline{b}_{l}(t)\in \mathbb{R}^{d}\text{,}  \label{se3:10}
\end{equation}%
and\ $\widetilde{\Theta }_{l}(t):\widehat{T}\rightarrow \mathbb{R}^{d}$ is a
$C^{k+1}$ mapping. Similarly, we shall also express the mapping $F_{l}(t)$
as
\begin{equation}
F_{l}(t)=\overline{F}_{l}(t)+\Phi _{l}(t),  \label{se3:11}
\end{equation}%
where\ $\Phi _{l}(t):\widehat{T}\rightarrow \mathbb{R}^{d}$ is a $C^{k,1}$
mapping. Notice that by construction, $\overline{F}_{l}(t)=\widehat{I}%
_{1}F_{l}(t)$, $\widehat{I}_{1}:C(\widehat{T})\rightarrow \widehat{P}_{1}(%
\widehat{T})$ being the Lagrange interpolant of degree 1. These
decompositions imply that under certain conditions to be stated below, one
can consider that both the curved $d$-simplex $T_{l}(t)$ and the
isoparametric $d$-simplex $\widetilde{T}_{l}(t)$ may be viewed as
perturbations of the straight $d$-simplex\ $\overline{T}_{l}(t)$. Likewise, $%
\widetilde{T}_{l}(t)$ can be considered as an approximation to $T_{l}(t)$.

In the next series of lemmas we shall establish the conditions under
which the partitions $\mathcal{T}_{h}(t)$ are conforming and
quasi-uniformly regular of order $k$, see Definition
\ref{definition2.5} of the Appendix, so that we associate with them
$H^{1}-$conforming finite elements spaces $V_{h}(t)$ which have the
approximation properties stated in Theorems \ref{theorem3} and
\ref{corollary1}.

\begin{lemma}
\label{lTht1} Let the family $\left( \mathcal{T}_{h}\right) _{h}$ be
quasi-uniformly regular of order $k$, and let $\overline{h}_{l}(t)$ and $
\overline{\kappa }_{l}(t)$ denote the lengths of the largest and smallest
edges of the straight $d-$simplex $\overline{T}_{l}(t)$, and $\overline{h}%
(t):=\max_{l}(\overline{h}_{l}(t))$. If $(t_{n}-t)\left\vert u\right\vert_{C(%
\left[ 0,T\right] ;\ W^{1,\infty }(\Omega )^{d})}$ is sufficiently small
for $t\in [t_{n-q},t_{n}]$, there are constants $\nu ^{\ast }$ and
$\sigma ^{\ast }$ such that for all $l$
\begin{equation*}
\frac{\overline{h}(t)}{\overline{h}_{l}(t)}\leq \nu ^{\ast }\text{ \ \textrm{%
and} \ }\frac{\overline{h}_{l}(t)}{\overline{\kappa }_{l}(t)}\leq \sigma
^{\ast }.
\end{equation*}
\end{lemma}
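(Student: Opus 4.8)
The plan is to transfer the quasi-uniform regularity of the fixed family $(\mathcal{T}_h)_h$ to the straight simplices $\overline{T}_l(t)$ by a perturbation argument, using the fact that the map $X(\cdot,t_n;t)$ is close to the identity when $(t_n-t)|u|_{C([0,T];W^{1,\infty}(\Omega)^d)}$ is small. The vertices of $\overline{T}_l(t)$ are the points $X(x^{(l)}_j,t_n;t)$, the images of the vertices $x^{(l)}_j$ of $T_l$, so from \eqref{int3} each vertex satisfies $X(x^{(l)}_j,t_n;t) = x^{(l)}_j - \int_t^{t_n} u(X(x^{(l)}_j,t_n;s),s)\,ds$. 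Subtracting the corresponding identities for two vertices of the same element and using the Lipschitz bound \eqref{int4} (or directly bounding the integrand via $\|u\|_{C([0,T];W^{1,\infty})}$ and Gronwall), I would show
\[
\bigl(1 - CK(t_n-t)\bigr)\,\bigl|x^{(l)}_i - x^{(l)}_j\bigr|
\;\le\; \bigl|X(x^{(l)}_i,t_n;t) - X(x^{(l)}_j,t_n;t)\bigr|
\;\le\; \bigl(1 + CK(t_n-t)\bigr)\,\bigl|x^{(l)}_i - x^{(l)}_j\bigr|,
\]
i.e. the edge lengths of $\overline{T}_l(t)$ are within a factor $1 \pm CK(t_n-t)$ of the edge lengths of the straight simplex $\overline{T}_l$ underlying $T_l$. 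Equivalently, one can phrase this through $\overline{B}_l(t) = \overline{B}_l + O((t_n-t))$ and $\overline{B}_l(t)^{-1} = \overline{B}_l^{-1} + O((t_n-t))$, so that $\|\overline{B}_l(t)\|$, $\|\overline{B}_l(t)^{-1}\|$ are comparable to $\|\overline{B}_l\|$, $\|\overline{B}_l^{-1}\|$ uniformly in $l$.

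From this edge-length comparison the two claimed inequalities follow by elementary estimates. For $\overline{h}_l(t)/\overline{\kappa}_l(t)$: pick the edge $(i,j)$ realizing $\overline{h}_l(t)$ and the edge $(p,q)$ realizing $\overline{\kappa}_l(t)$; then
\[
\frac{\overline{h}_l(t)}{\overline{\kappa}_l(t)}
\;\le\; \frac{(1+CK(t_n-t))\,|x^{(l)}_i - x^{(l)}_j|}{(1-CK(t_n-t))\,|x^{(l)}_p - x^{(l)}_q|}
\;\le\; \frac{1+CK(t_n-t)}{1-CK(t_n-t)}\cdot\frac{\overline{h}_l}{\overline{\kappa}_l}
\;\le\; \frac{1+CK(t_n-t)}{1-CK(t_n-t)}\,\sigma,
\]
where $\sigma$ is the quasi-uniform-regularity constant of $(\mathcal{T}_h)_h$; requiring $(t_n-t)|u|_{C([0,T];W^{1,\infty})}$ small enough that $CK(t_n-t)\le 1/3$, say, gives $\sigma^\ast := 2\sigma$. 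The same two-sided bound applied to $\overline{h}(t) = \max_l \overline{h}_l(t)$ versus $\overline{h}_l(t)$, together with $\overline{h} = \max_l \overline{h}_l$ and $\overline{h}/\overline{h}_l \le \nu$ for the fixed family, yields $\overline{h}(t)/\overline{h}_l(t) \le \nu^\ast := 2\nu$. Here I should be slightly careful that $C$ and $K$ depend only on $|u|_{C([0,T];W^{1,\infty}(\Omega)^d)}$ and on $T$, not on $h$, $l$, or $t$, so that the smallness condition can be imposed uniformly.

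The main obstacle — really the only nontrivial point — is making the perturbation estimate on the vertex differences genuinely uniform in $l$ and controlling the constant: one must argue that $\left|\int_t^{t_n}\!\bigl(u(X(x^{(l)}_i,t_n;s),s) - u(X(x^{(l)}_j,t_n;s),s)\bigr)ds\right| \le |u|_{W^{1,\infty}}\int_t^{t_n}|X(x^{(l)}_i,t_n;s) - X(x^{(l)}_j,t_n;s)|\,ds$ and then close the loop with a Gronwall argument to get the two-sided bound \eqref{int4}-type estimate with $K = \exp((t_n-t)|u|_{C([0,T];W^{1,\infty})})$; this is exactly the content already recorded in \eqref{int4}, so in fact I would just invoke it directly, observing that $K \to 1$ as $(t_n-t)|u|_{C([0,T];W^{1,\infty})}\to 0$, which turns \eqref{int4} into the required $(1\pm\text{small})$ comparison. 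Everything else is bookkeeping with the definitions of quasi-uniform regularity. $\rule{0.5em}{0.5em}$
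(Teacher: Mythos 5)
Your proposal is correct and follows essentially the same route as the paper: both reduce the claim to the quasi-isometry bound (\ref{int4}) for the flow map $X(\cdot,t_{n};t)$ applied to the vertices of $T_{l}$, deduce that the edge lengths (and inscribed/circumscribed quantities) of $\overline{T}_{l}(t)$ are comparable to those of $\overline{T}_{l}$ up to the factor $K=\exp\left((t_{n}-t)\left\vert u\right\vert_{C([0,T];W^{1,\infty}(\Omega)^{d})}\right)$, and then invoke the quasi-uniform regularity of $(\mathcal{T}_{h})_{h}$ to obtain $\nu^{\ast}$ and $\sigma^{\ast}$ (the paper takes $\nu^{\ast}=K^{2}\nu$ and $\sigma^{\ast}=K^{2}\sigma$, comparing the smallest edge of $\overline{T}_{l}(t)$ with the inscribed-ball diameter $\overline{\rho}_{l}$ rather than with the smallest edge of $\overline{T}_{l}$, which is an immaterial difference).
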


\begin{proof}
Let $\overline{h}_{l}$ and $\overline{\rho }_{l}$ be the diameters of $%
\overline{T}_{l}$ and of the largest sphere inscribed in $\overline{T}_{l}$
respectively, and $h:=\max_{l}(\overline{h}_{l})$, then the inequality (\ref%
{int4}) implies that for all $l$
\begin{equation*}
K^{-1}\overline{h}_{l}\leq \overline{h}_{l}(t)\leq K\overline{h}_{l}\text{ \
\textrm{and \ }}\overline{h}(t)\leq Kh.
\end{equation*}%
Similarly, since $\overline{\kappa }_{l}(t)\geq K^{-1}\overline{\rho }_{l}$,
then, for any $l$,%
\begin{equation*}
\frac{\overline{h}(t)}{\overline{h}_{l}(t)}\leq K^{2}\frac{h}{\overline{h}%
_{l}}\leq K^{2}\nu ,\ \ \frac{\overline{h}_{l}(t)}{\overline{\kappa }_{l}(t)}%
\leq K^{2}\frac{\overline{h}_{l}}{\overline{\rho }_{l}}\leq K^{2}\sigma .
\end{equation*}
So, letting $\nu ^{\ast }=K^{2}\nu $ and $\sigma ^{\ast }=K^{2}\sigma $, $%
\nu $ and $\sigma $ being the quasi-uniform regularity constants of $\left(
\mathcal{T}_{h}\right) _{h}$, the result follows.
\end{proof}

The next lemma provides a quantitative evaluation of the difference between
the elements $T_{l}(t)$ and $\widetilde{T}_{l}(t)$.

\begin{lemma}
\label{lTht} Let $u\in C(\left[ 0,T\right] ;W^{k+1,\infty }(\Omega )^{d})$
and $\Delta t$ and $h$ be sufficiently small, then for all $t\in \lbrack
t_{n-q},t_{n})$ and for all $1\leq l\leq NE$, $F_{l}(t)$ is a mapping of
class $C^{k,1}(\widehat{T})$, and there is a constant $C$ independent of $%
\Delta t$ and $h$, but depending on $q$ and $k$, such that for $0\leq m\leq
k $
\begin{equation}
\left\vert F_{l}(t)-\widetilde{F}_{l}(t)\right\vert _{W^{m,\infty }(\widehat{%
T})^{d}}\leq C\overline{h}_{l}^{k+1}\Delta t\left\Vert u\right\Vert
_{L^{\infty }(0,T;W^{k+1,\infty }(T_{l}(t))^{d})}.  \label{se3:12}
\end{equation}
\end{lemma}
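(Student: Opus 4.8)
The plan is to exploit the representation \eqref{se3:7} for $F_l(t)$ together with the fact that $\widetilde F_l(t) = \widehat I_k F_l(t)$ from \eqref{se3:6}, so that the difference $F_l(t) - \widetilde F_l(t) = (\mathrm{Id} - \widehat I_k) F_l(t)$ is exactly the Lagrange interpolation error of the map $F_l(t)$ on the reference simplex $\widehat T$. First I would verify the claimed $C^{k,1}$ regularity of $F_l(t)$: by \eqref{se3:7}, $F_l(t)(\widehat x) = F_l(\widehat x) - \int_t^{t_n} u(X(F_l(\widehat x), t_n;\tau),\tau)\,d\tau$, and $F_l$ is $C^{k+1}$ by assumption while $X(\cdot, t_n;\tau)$ is $C^1(0,T;W^{k+1,\infty}(\Omega)^d)$ by Lemma \ref{solX}; the composition and $\tau$-integration of a $W^{k+1,\infty}$ velocity field then yields a map that is $C^{k,1}$ in $\widehat x$ (one loses the top-order continuity because of the composition with the flow, whose highest derivative is only Lipschitz via Lemma \ref{solX}). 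Since $F_l$ itself is $C^{k+1}$, the non-polynomial part of $F_l(t)$ — and hence the only part that survives the interpolation error, because $\widehat I_k$ reproduces polynomials of degree $\le k$ — is the integral term
\begin{equation*}
\Psi_l(t)(\widehat x) := -\int_t^{t_n} u\big(X(F_l(\widehat x), t_n;\tau),\tau\big)\,d\tau .
\end{equation*}
Thus $F_l(t) - \widetilde F_l(t) = (\mathrm{Id} - \widehat I_k)\Psi_l(t)$, reducing the problem to bounding a standard interpolation error.

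Next I would apply the classical Lagrange interpolation error estimate on the reference element: for $0\le m\le k$,
\begin{equation*}
\big|(\mathrm{Id} - \widehat I_k)\Psi_l(t)\big|_{W^{m,\infty}(\widehat T)^d} \le C\, |\Psi_l(t)|_{W^{k+1,\infty}(\widehat T)^d},
\end{equation*}
with $C$ depending only on $k$ and the geometry of $\widehat T$. (One should note $\Psi_l(t)$ is only $C^{k,1}$, so the $W^{k+1,\infty}$ seminorm is the relevant finite quantity; the Bramble–Hilbert / interpolation-error argument goes through with the Lipschitz $k$-th derivative.) It then remains to estimate $|\Psi_l(t)|_{W^{k+1,\infty}(\widehat T)^d}$. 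Since $\Delta t = t_n - t_{n-q}$ bounds the length of the integration interval up to the factor $q$, pulling the $W^{k+1,\infty}$ seminorm in $\widehat x$ under the $\tau$-integral gives a factor $\Delta t$ times $\sup_\tau |u(X(F_l(\widehat x),t_n;\tau),\tau)|_{W^{k+1,\infty}(\widehat T)^d}$. The chain rule applied to the composition $\widehat x \mapsto u(X(F_l(\widehat x),t_n;\tau),\tau)$ produces a sum of products of derivatives of $u$ (up to order $k+1$) evaluated at $X(F_l(\widehat x),t_n;\tau) \in T_l(\tau)$, times derivatives of $X(\cdot,t_n;\tau)$ and of $F_l$ (up to order $k+1$). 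The latter two families of derivatives are bounded by constants depending only on $k$, $q$, the quasi-uniform regularity constants, and — crucially — carry a factor $\|\overline B_l\| \sim \overline h_l$ per order of differentiation of $F_l$, by \eqref{se2:84} and Lemma \ref{solX}; collecting $k+1$ such factors yields the $\overline h_l^{k+1}$ prefactor. Evaluating the $u$-derivatives at points of $T_l(\tau) \subset T_l(t)$ (for the relevant range of $t,\tau$) and taking a supremum over $\tau$ gives the norm $\|u\|_{L^\infty(0,T;W^{k+1,\infty}(T_l(t))^d)}$ appearing in \eqref{se3:12}.

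The main obstacle will be the careful bookkeeping in the chain-rule step: one must track exactly how each derivative of the composite map distributes among the derivatives of $u$, of the flow $X(\cdot,t_n;\tau)$, and of $F_l$, and verify that the homogeneity in $\overline h_l$ is precisely $k+1$ — no more, no less — using \eqref{se2:84} to convert derivatives of $F_l$ into powers of $\|\overline B_l\|$ and the equivalence $\|\overline B_l\| \asymp \overline h_l$ from quasi-uniform regularity. A secondary subtlety is the regularity mismatch: $X(\cdot,t_n;\tau)$ has only its $(k+1)$-st derivative Lipschitz (Lemma \ref{solX}), so the $W^{k+1,\infty}$ quantities are finite but not continuous; one must make sure the interpolation estimate and the chain rule are invoked in their $L^\infty$/Lipschitz forms rather than requiring $C^{k+1}$. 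Everything else — the scaling from $\widehat T$ back to $T_l(t)$ is in fact not needed here since \eqref{se3:12} is stated as a seminorm on $\widehat T$ — is routine.
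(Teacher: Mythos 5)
Your proposal is correct and follows essentially the same route as the paper: the identity $\widetilde F_l(t)=\widehat I_k F_l(t)$ reduces the difference to a Lagrange interpolation error, which is bounded by $C\,\bigl|\int_t^{t_n}\widehat v(\cdot,\tau)\,d\tau\bigr|_{W^{k+1,\infty}(\widehat T)^d}$ and then by $q\Delta t$ times the supremum in $\tau$ of $|\widehat v|_{W^{k+1,\infty}(\widehat T)^d}$. The chain-rule bookkeeping you flag as the main obstacle (converting derivatives of the composite $\widehat v=u\circ X\circ F_l$ on $\widehat T$ into $\overline h_l^{k+1}\Vert u\Vert_{W^{k+1,\infty}(T_l(t))^d}$) is exactly what the paper outsources to the scaling Lemma \ref{lemma7} of the Appendix, so no new idea is missing.
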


\begin{proof}
First, we prove that $F_{l}(t)$ is a mapping of class $C^{k,1}(\widehat{T})$%
. Thus, recalling (\ref{se3:7}) we recast $F_{l}(t)(\widehat{x})$ as
\begin{equation*}
F_{l}(t)(\widehat{x})=F_{l}(\widehat{x})-\int_{t}^{t_{n}}\widehat{v}(%
\widehat{x},\tau )d\tau ,
\end{equation*}
where $\widehat{v}(\widehat{x},\tau ):=u(X(F_{l}(\widehat{x}),t_{n};\tau
),\tau )$; now, noting that for each $t_{n}>\tau \geq t$,\ we can define the
mapping $F_{l}(\tau ):=\widehat{T}\rightarrow \mathbb{R}^{d}$ by
\begin{equation*}
F_{l}(\tau )(\widehat{x})=X(\cdot ,t_{n};\tau )\circ F_{l}(\widehat{x}),
\end{equation*}%
so that we can set $T_{l}(\tau ):=\left\{ X(x,t_{n};\tau ):x\in T_{l}\subset
\overline{\Omega }\right\} $, $T_{l}(\tau )\subset \overline{\Omega }(\tau )$%
, then by virtue of Lemma \ref{solX} it follows that $\widehat{v}(\widehat{x}%
,\tau )\in W^{k+1,\infty }(\widehat{T})^{d}$; hence, $F_{l}(t)$ is a mapping
of class $C^{k,1}(\widehat{T})$. To prove the estimate (\ref{se3:12}), we
make use of (\ref{se3:6}) and the interpolation theory in Sobolev spaces;
therefore, we can write that
\begin{equation*}
\left\vert F_{l}(t)-\widetilde{F}_{l}(t)\right\vert _{W^{m,\infty }(\widehat{%
T})^{d}}\leq C\left\vert F_{l}(t)\right\vert _{W^{k+1,\infty }(\widehat{T}%
)^{d}}=C\left\vert \int_{t}^{t_{n}}\widehat{v}(\widehat{x},\tau )d\tau
\right\vert _{W^{k+1,\infty }(\widehat{T})^{d}}.
\end{equation*}%
Since for $t\in \lbrack t_{n-q},t_{n})$, $\left\vert \int_{t}^{t_{n}}
\widehat{v}(\widehat{x},\tau )d\tau \right\vert _{W^{k+1,\infty }(\widehat{T}%
)^{d}}\leq q\Delta t\left\vert \widehat{v}(\widehat{x},\tau )\right\vert
_{L^{\infty }(t,t_{n};W^{k+1,\infty }(\widehat{T})^{d})}$, then by virtue of
Lemma \ref{lemma7} in the Appendix the result follows.
\end{proof}

The next lemma shows that when $\Delta t$ and $h$ are sufficiently small,
both $T_{l}(t)$ and $\widetilde{T}_{l}(t)$ are curved elements in the sense
defined in \cite{Bern} so that inequalities such as (\ref{se2:83}) and (\ref%
{se2:84}) must hold for these elements too.

\begin{lemma}
\label{lTht2} \ (A) Let $u\in C(\left[ 0,T\right] ;W^{k+1,\infty }(\Omega
)^{d})$ and $\Delta t$ and $h$ be sufficiently small. Then, for all $n>q,\
t\in \lbrack t_{n-q},t_{n})$ and $l$, the element $T_{l}(t)$ is a curved
element of class $C^{k,1}$; that is, $F_{l}(t)$ is a $C^{k,1}$%
-diffeomorphism from $\widehat{T}$ onto $T_{l}(t)$ and there is a constant $%
c_{T_{l}(t)}$ such that
\begin{equation*}
c_{T_{l}(t)}=\sup_{\widehat{x}\in \widehat{T}}\left\Vert D\Phi _{l}(t)(%
\widehat{x})\overline{B}_{l}^{-1}(t)\right\Vert <1,
\end{equation*}%
and for $2\leq s\leq k+1$ there are constants $c_{s}$ such that
\begin{equation*}
\left\{
\begin{array}{l}
\sup_{\widehat{x}\in \widehat{T}}\left\Vert D^{s}F_{l}(t)(\widehat{x}%
)\right\Vert \cdot \left\Vert \overline{B}_{l}(t)\right\Vert ^{-s}\leq
c_{s}(T_{l}(t)),\ \mathrm{and} \\
\\
\sup_{x\in T}\left\Vert D^{s}F_{l}^{-1}(t)(x)\right\Vert \leq
c_{-s}\left\Vert \overline{B}(t)_{l}\right\Vert ^{2(s-1)}\left\Vert
\overline{B}(t)_{l}^{-1}\right\Vert ^{s}.%
\end{array}%
\right.
\end{equation*}%
The constants $c_{-s}$ depend continuously on $c_{_{T_{l}}},\
c_{2}(T_{l}),\ldots ,c_{k+1}(T_{l})$.

(B) Likewise, the isoparametric element $\widetilde{T}_{l}(t)$ is a curved
element of class $C^{k+1}$ because (i)$\ \widetilde{F}_{l}(t)$ is a $C^{k+1}$%
-mapping from $\widehat{T}$ onto $\widetilde{T}_{l}(t)$; (ii) there is
constant $c_{\widetilde{T}(l)}$
\begin{equation*}
c_{\widetilde{T}_{l}(t)}=\sup_{\widehat{x}\in \widehat{T}}\left\Vert D%
\widetilde{\Theta }_{l}(t)(\widehat{x})\overline{B}_{l}^{-1}(t)\right\Vert
<1;
\end{equation*}
and (iii)$\ \widetilde{F}_{l}(t)$ fulfills the results of Lemma \ref%
{lemma1_apen} in the Appendix.
\end{lemma}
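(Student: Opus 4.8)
The plan is to regard both $F_l(t)$ and $\widetilde F_l(t)$ as small perturbations of the fixed curved mapping $F_l$ of (\ref{se2:81}), which by hypothesis is a $C^{k+1}$-diffeomorphism satisfying the curved-element conditions (\ref{se2:83})--(\ref{se2:84}) \emph{with the strict inequality} $c_{T_l}<1$; since $\left(\mathcal T_h\right)_h$ is quasi-uniformly regular, the quantities $c_{T_l}$, $c_s(T_l)$, $c_{-s}$ and the condition numbers $\|\overline B_l\|\,\|\overline B_l^{-1}\|$ are all bounded uniformly in $l$, so any perturbation of size $o(1)$ as $\Delta t,h\to 0$ will preserve each condition, with only slightly enlarged constants. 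The basic estimate comes from differentiating the identity (\ref{se3:7}): since $F_l(t)(\widehat x)-F_l(\widehat x)=-\int_t^{t_n}u(X(F_l(\widehat x),t_n;\tau),\tau)\,d\tau$ with $|t-t_n|\le q\Delta t$, the chain rule together with Lemma \ref{solX} (which controls the spatial derivatives of $X(\cdot,t_n;\tau)$) feeds in exactly the powers $\|\overline B_l\|^{j}$ coming from $D^jF_l$, giving
\begin{equation*}
\left|F_l(t)-F_l\right|_{W^{s,\infty}(\widehat T)^d}\le C\,\Delta t\,\|\overline B_l\|^{s}\,\|u\|_{L^\infty(0,T;W^{s,\infty}(T_l(t))^d)},\qquad 1\le s\le k+1,
\end{equation*}
while $F_l(t)\in C^{k,1}(\widehat T)$ was already shown in Lemma \ref{lTht}.

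For part (A) I would first establish the strict inequality. Using $\overline F_l(t)=\widehat I_1F_l(t)$ and $\overline F_l=\widehat I_1F_l$ one has $\Phi_l(t)-\Theta_l=(\mathrm{Id}-\widehat I_1)\!\left(F_l(t)-F_l\right)$ and $\overline B_l(t)-\overline B_l=D\widehat I_1\!\left(F_l(t)-F_l\right)$, so by the basic estimate with $s=1$ and stability of the fixed interpolant $\widehat I_1$, both $\|D(\Phi_l(t)-\Theta_l)\,\overline B_l^{-1}\|$ and $\|(\overline B_l(t)-\overline B_l)\,\overline B_l^{-1}\|$ are $O(\Delta t)$; expanding $\overline B_l(t)^{-1}$ by a Neumann series and using the uniform condition-number bound then gives $\|D\Theta_l(\overline B_l(t)^{-1}-\overline B_l^{-1})\|=O(\Delta t)$, whence
\begin{equation*}
c_{T_l(t)}=\sup_{\widehat x\in\widehat T}\left\|D\Phi_l(t)(\widehat x)\,\overline B_l^{-1}(t)\right\|\le c_{T_l}+C\Delta t<1
\end{equation*}
uniformly in $l$ for $\Delta t$ small. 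The diffeomorphism property is then immediate: $DF_l(t)=\overline B_l(t)\bigl(I+\overline B_l(t)^{-1}D\Phi_l(t)\bigr)$ is invertible with $\|DF_l(t)^{-1}\|\le\|\overline B_l(t)^{-1}\|/(1-c_{T_l(t)})$, so by the inverse function theorem and the injectivity of $X(\cdot,t_n;t)$ (cf.\ (\ref{int4})) together with $F_l(t)\in C^{k,1}(\widehat T)$ the inverse is $C^{k,1}$. For the upper bounds on $D^sF_l(t)$ one adds $\|D^sF_l\|\le c_s(T_l)\|\overline B_l\|^s$ (from (\ref{se2:84})) to the $O(\Delta t\,\|\overline B_l\|^s)$ correction above and uses $\|\overline B_l(t)\|\sim\|\overline B_l\|$ (Lemma \ref{lTht1}); and the bound on $D^sF_l^{-1}(t)$ follows from the Fa\`{a} di Bruno formula for derivatives of an inverse map, which expresses $D^sF_l^{-1}(t)$ through $DF_l(t)^{-1}$ and $D^jF_l(t)$, $2\le j\le s$, and therefore produces constants $c_{-s}$ depending continuously on $c_{T_l},c_2(T_l),\dots,c_{k+1}(T_l)$, as claimed.

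For part (B), note first that $\widetilde F_l(t)=\widehat I_kF_l(t)$ is a polynomial of degree $k$ on $\widehat T$, hence of class $C^{k+1}$ (indeed $C^\infty$); it is a diffeomorphism onto $\widetilde T_l(t)$ for $\Delta t,h$ small by the same Neumann-series argument applied to $D\widetilde F_l(t)=\overline B_l(t)\bigl(I+\overline B_l(t)^{-1}D\widetilde\Theta_l(t)\bigr)$ (note $\overline B_l(t)$ is invertible, being $O(\Delta t)$-close to $\overline B_l$). For the strict bound, write $\widetilde\Theta_l(t)=\Phi_l(t)+\bigl(\widetilde F_l(t)-F_l(t)\bigr)$: part (A) gives $\|D\Phi_l(t)\,\overline B_l^{-1}(t)\|\le c_{T_l(t)}<1$, while Lemma \ref{lTht} with $m=1$ together with $\|\overline B_l^{-1}(t)\|\sim\overline h_l^{-1}$ (Lemma \ref{lTht1}) gives $\|D(\widetilde F_l(t)-F_l(t))\,\overline B_l^{-1}(t)\|\le C\,\overline h_l^{\,k}\Delta t\,\|u\|$, so that $c_{\widetilde T_l(t)}\le c_{T_l(t)}+Ch^{k}\Delta t<1$ for $\Delta t,h$ small. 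The remaining bounds making up Lemma \ref{lemma1_apen} are then obtained as in classical isoparametric theory (\cite{Bern} and \cite{Ci}, Chapter~4.3): estimates on $D^s\widetilde F_l(t)$ and $D^s\widetilde F_l^{-1}(t)$ follow from those on $D^sF_l(t)$ proved in part~(A) plus the interpolation bound $\left|\widetilde F_l(t)-F_l(t)\right|_{W^{s,\infty}(\widehat T)^d}\le C\left|F_l(t)\right|_{W^{k+1,\infty}(\widehat T)^d}$, $0\le s\le k$, whose right-hand side is $O(\|\overline B_l\|^{k+1}\Delta t)$ by the basic estimate. I expect the main obstacle to be not any single idea but the bookkeeping: one must check that every perturbation correction is genuinely $o(1)$ rather than merely bounded, which relies precisely on the cancellation of the $\|\overline B_l\|$-powers supplied through $F_l$ against the $\|\overline B_l^{-1}\|$-powers present in the curved-element quantities, and on the uniformity in $l$ furnished by quasi-uniform regularity.
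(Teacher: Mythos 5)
Your proposal is correct and shares the skeleton of the paper's proof: both arguments reduce everything to the integral representation (\ref{se3:7}) together with the interpolation identities $\overline F_l(t)=\widehat I_1F_l(t)$ and $\widetilde F_l(t)=\widehat I_kF_l(t)$, both use the splitting $\widetilde\Theta_l(t)=\bigl(F_l(t)-\overline F_l(t)\bigr)+\bigl(\widetilde F_l(t)-F_l(t)\bigr)$ in part (B), and both delegate the bounds on $D^sF_l^{-1}(t)$ and item (iii) to Bernardi's Lemmas 2.1--2.2. The one step where you take a genuinely different route is the strict inequality $c_{T_l(t)}<1$: you obtain it perturbatively, writing $\Phi_l(t)-\Theta_l=(\mathrm{Id}-\widehat I_1)\bigl(F_l(t)-F_l\bigr)$ and expanding $\overline B_l(t)^{-1}$ around $\overline B_l^{-1}$ by a Neumann series, so that $c_{T_l(t)}\le c_{T_l}+C\Delta t$; this relies on the uniform bounds $\sup_l c_{T_l}\le c<1$ and $\|\overline B_l\|\,\|\overline B_l^{-1}\|\le C$ furnished by quasi-uniform regularity, and it does not need $h$ to be small for this particular inequality. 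The paper instead bounds $c_{T_l(t)}$ absolutely: it estimates $\sup_{\widehat x}\|D(F_l(t)-\widehat I_1F_l(t))\|\le C|F_l(t)|_{W^{2,\infty}(\widehat T)^d}\le K(T_l)\overline h_l^{2}\bigl(1+\Delta t\|u\|\bigr)$ by Bramble--Hilbert and combines this with $\|\overline B_l^{-1}(t)\|\le c(\sigma^*,\nu^*)\overline h_l^{-1}$, yielding $c_{T_l(t)}\le C\overline h_l\bigl(1+\Delta t\|u\|\bigr)$, i.e.\ not merely $<1$ but $O(h)$ --- which is what quasi-uniform regularity of order $k$ already forces on $c_{T_l}$ itself, so the two routes are consistent; the same comparison applies to your bound $c_{\widetilde T_l(t)}\le c_{T_l(t)}+Ch^{k}\Delta t$ versus the paper's (\ref{se3:133}). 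The remaining ingredients of your argument (the $C^\infty$ regularity of $\widetilde F_l(t)$ as a degree-$k$ polynomial, the use of Lemma \ref{lTht} for $|\widetilde F_l(t)-F_l(t)|_{W^{1,\infty}(\widehat T)^d}$, the Fa\`a di Bruno reduction for $c_{-s}$, and the cancellation of $\|\overline B_l\|$-powers against $\|\overline B_l^{-1}\|$-powers) coincide with the paper's, and I see no gap.
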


\begin{proof}
(A) Lemma \ref{lTht} says that $F_{l}(t)$ is a diffeomorphism of class $%
C^{k,1}$, so we have to prove now the existence of the constants $%
c_{T_{l}(t)},$ $c_{s}(T_{l}(t))$ and $c_{-s}(T_{l}(t))$. First, from (\ref%
{se3:10}) and (\ref{se3:11}) one obtains that
\begin{equation*}
\sup_{\widehat{x}\in \widehat{T}}\left\Vert D\Phi _{l}(t)(\widehat{x})%
\overline{B}_{l}^{-1}(t)\right\Vert \leq \sup_{\widehat{x}\in \widehat{T}%
}\left\Vert D(F_{l}(t)(\widehat{x})-\overline{F}_{l}(t)(\widehat{x}%
))\right\Vert \cdot \left\Vert \overline{B}_{l}^{-1}(t)\right\Vert .
\end{equation*}
Since $\overline{F}_{l}(t)(\widehat{x})=\widehat{I}_{1}F_{l}(t)(\widehat{x})$
\begin{equation*}
\sup_{\widehat{x}\in \widehat{T}}\left\Vert D(F_{l}(t)(\widehat{x})-%
\overline{F}_{l}(t)(\widehat{x}))\right\Vert =\left\vert F_{l}(t)-\widehat{I}%
_{1}F_{l}(t)\right\vert _{W^{1,\infty }(\widehat{T})^{d}}\leq C\left\vert
F_{l}(t)(\widehat{x})\right\vert _{W^{2,\infty }(\widehat{T})^{d}},
\end{equation*}
and by (\ref{se3:7})
\begin{equation*}
\left\vert F_{l}(t)(\widehat{x})\right\vert _{W^{2,\infty }(\widehat{T}%
)^{d}}\leq \left\vert F_{l}(\widehat{x})\right\vert _{W^{2,\infty }(\widehat{%
T})^{d}}+\left\vert \int_{t}^{t_{n}}\widehat{v}(\widehat{x},\tau )d\tau
\right\vert _{W^{2,\infty }(\widehat{T})^{d}},
\end{equation*}%
with\ $\widehat{v}(\widehat{x},\tau ):=u(X(F_{l}(\widehat{x}),t_{n};\tau
),\tau )$. From (\ref{se2:84}) it follows that $\ \left\vert F_{l}(\widehat{x%
})\right\vert _{W^{2,\infty }(\widehat{T})^{d}}\leq c_{2}(T_{l})\overline{h}%
_{l}^{2}$, and we estimate $\left\vert \int_{t}^{t_{n}}\widehat{v}(\widehat{x%
},\tau )d\tau \right\vert _{W^{2,\infty }(\widehat{T})^{d}}$ arguing as in
the proof of Lemma \ref{lTht}. Hence, there is a constant $K(T_{l})$ such
that
\begin{equation*}
\sup_{\widehat{x}\in \widehat{T}}\left\Vert D(F_{l}(t)(\widehat{x})-%
\overline{F}_{l}(t)(\widehat{x}))\right\Vert \leq K(T_{l})\overline{h}%
_{l}^{2}\left( 1+\Delta t\left\Vert u\right\Vert _{C(\left[ t,t_{n}\right]
;W^{2,\infty }(T_{l}(t))^{d})}\right) .
\end{equation*}
Recalling that for affine equivalent elements
\begin{equation*}
\left\Vert \overline{B}_{l}^{-1}(t)\right\Vert \leq \frac{\widehat{h}}{%
\overline{\rho }_{l}(t)}\leq c(\sigma ^{\ast },\nu ^{\ast })\overline{h}%
_{l}^{-1},
\end{equation*}%
then
\begin{equation}
c_{T_{l}(t)}=\sup_{\widehat{x}\in \widehat{T}}\left\Vert D\Phi _{l}(t)(%
\widehat{x})\overline{B}_{l}^{-1}(t)\right\Vert \leq c(\sigma ^{\ast },\nu
^{\ast })K(T_{l})\overline{h}_{l}\left( 1+\Delta t\left\Vert u\right\Vert
_{C(\left[ t,t_{n}\right] ;W^{2,\infty }(T_{l}(t))^{d})}\right) <1
\label{se3:132}
\end{equation}
if $\Delta t$ and $h$ are sufficiently small. To prove the existence of the
constants $c_{s}(T_{l}(t))$, for $2\leq s\leq k+1$, we notice once again
that
\begin{equation*}
\begin{array}{l}
\sup_{\widehat{x}\in \widehat{T}}\left\Vert D^{s}F_{l}(t)(\widehat{x}%
)\right\Vert =\left\vert F_{l}(t)(\widehat{x})\right\vert _{W^{s,\infty }(%
\widehat{T})} \\
\\
\leq \left\vert F_{l}(\widehat{x})\right\vert _{W^{s,\infty }(\widehat{T}%
)^{d}}+\left\vert \int_{t}^{t_{n}}\widehat{v}(\widehat{x},\tau )d\tau
\right\vert _{W^{s,\infty }(\widehat{T})^{d}} \\
\\
\leq c_{s}(T_{l})\overline{h}_{l}^{s}+\overline{c}_{s}(T_{l})qh^{s}\Delta
t\left\Vert u\right\Vert _{C(\left[ t,t_{n}\right] ;W^{s,\infty
}(T_{l}(t))^{d})} \\
\\
\leq C\overline{h}_{l}^{s}(1+\Delta t\left\Vert u\right\Vert _{C(\left[
t,t_{n}\right] ;W^{s,\infty }(T_{l}(t))^{d})}).%
\end{array}%
\end{equation*}%
Hence, it follows the existence of a constant $c_{s}(T_{l}(t))$. The
existence of the constants $c_{-s}(T_{l}(t))$ is proven as in Lemma 2.2 of
\cite{Bern}.

(B) First, since $\widetilde{F}_{l}(t)(\widehat{x})=\widehat{I}_{k}F_{l}(t)(%
\widehat{x})$, then $\widetilde{F}_{l}(t)$ is a mapping of class $C^{k+1}(%
\widehat{T})$. Secondly, to calculate the constant $c_{\widetilde{T}_{l}(t)}$
we recall that $D\widetilde{\Theta }_{l}(t)(\widehat{x})\overline{B}%
_{l}^{-1}(t)=D\left( \widetilde{F}_{l}(t)(\widehat{x})-\overline{F}_{l}(t)(%
\widehat{x})\right) \overline{B}_{l}^{-1}(t)$ and $\overline{F}_{l}(t)(%
\widehat{x})=\widehat{I}_{1}F_{l}(t)(\widehat{x})$. So, we can write
\begin{equation*}
D\left( \widetilde{F}_{l}(t)(\widehat{x})-\overline{F}_{l}(t)(\widehat{x}%
)\right) =D\left( \widetilde{F}_{l}(t)(\widehat{x})-F_{l}(t)(\widehat{x}%
)\right) +D\left(F_{l}(t)(\widehat{x})-\overline{F}_{l}(t)(\widehat{x}%
)\right).
\end{equation*}%
We have already estimated $\sup_{\widehat{x}\in \widehat{T}}\left\Vert
D(F_{l}(t)(\widehat{x})-\overline{F}_{l}(t)(\widehat{x}))\right\Vert $, so
it remains to estimate $D\left( \widetilde{F}_{l}(t)(\widehat{x})-F_{l}(t)(%
\widehat{x})\right) $. To do so, we recall that $\widetilde{F}_{l}(t)(%
\widehat{x})=\widehat{I}_{k}F_{l}(t)(\widehat{x})$ and (\ref{se3:7}), then
setting $\widehat{v}(\widehat{x},\tau )=u(X(F_{l}(\widehat{x}),t_{n};\tau ))$
yields
\begin{equation*}
\widetilde{F}_{l}(t)(\widehat{x})-F_{l}(t)(\widehat{x})=\int_{t_{n}}^{t}%
\left( \widehat{I}_{k}\widehat{v}(\widehat{x},\tau )-\widehat{v}(\widehat{x}%
,\tau )\right) d\tau .
\end{equation*}
From this expression and approximation theory we obtain, using the arguments
of the proof of Lemma \ref{lTht}, that
\begin{equation*}
\begin{array}{l}
\sup_{\widehat{x}\in \widehat{T}}\left\Vert D\left( \widetilde{F}_{l}(t)(%
\widehat{x})-F_{l}(t)(\widehat{x})\right) \right\Vert =\left\vert \widetilde{%
F}_{l}(t)(\widehat{x})-F_{l}(t)(\widehat{x})\right\vert _{W^{1,\infty }(%
\widehat{T})^{d}} \\
\\
\leq q\Delta t\left\vert \widehat{v}-\widehat{I}_{k}\widehat{v}\right\vert
_{L^{\infty }(t,t_{n};W^{1,\infty }(\widehat{T})^{d})}\leq C\Delta
th_{l}^{k+1}\left\Vert u\right\Vert _{C(\left[ 0,T\right]
;W^{k+1}(T_{l}(t))^{d})}.%
\end{array}%
\end{equation*}
Then, collecting these estimates we have that
\begin{equation}
\begin{array}{r}
c_{\widetilde{T}_{l}(t)}=\sup_{\widehat{x}\in \widehat{T}}\left\Vert D%
\widetilde{\Theta}_{l}(t)(\widehat{x})\overline{B}_{l}^{-1}(t)\right\Vert
\leq \sup_{\widehat{x}\in \widehat{T}}\left\Vert D\left( \widetilde{F}%
_{l}(t)(\widehat{x})-F_{l}(t)(\widehat{x})\right) \right\Vert \cdot
\left\Vert \overline{B}_{l}^{-1}(t)\right\Vert \\
\\
+\sup_{\widehat{x}\in \widehat{T}}\left\Vert D\left( F_{l}(t)(\widehat{x})-%
\overline{F}_{l}(t)(\widehat{x})\right) \right\Vert \cdot \left\Vert
\overline{B}_{l}^{-1}(t)\right\Vert \\
\\
\leq c(\sigma ^{\ast },\nu ^{\ast })\left[ K(T_{l})\overline{h}_{l}+\left(C%
\overline{h}_{l}^{k}+K(T_{l})\overline{h}_{l}\right) \Delta t\left\Vert
u\right\Vert _{C(\left[ 0,T\right] ;W^{k+1}(T_{l}(t))^{d})}\right] <1%
\end{array}
\label{se3:133}
\end{equation}%
if $\Delta t$ and $h$ are sufficiently small. Finally,\ the proof of
statement (iii) is identical to the proof of Lemma 2.1 of \cite{Bern}, so is
omitted.
\end{proof}

\begin{remark}
\label{hdelta_t} Since both constants $c_{T_{l}(t)}$ and
$c_{\widetilde{T}_{l}(t)}$ are required to be smaller than $1$, then
one has to choose positive numbers $h_{1}$ and $\Delta t_{1}$ such
that for $h<h_{1}$ and $\Delta t<\Delta t_{1}$ both (\ref{se3:132})
and (\ref{se3:133}) hold. Hence, in the sequel we shall assume that
both $\Delta t$ and $h$ are such that $\Delta t\in (0,\Delta t_{1})$
and $h\in (0,h_{1})$.
\end{remark}

\begin{lemma}
\label{conformal} If $\mathcal{T}_{h}$ is a geometrically conforming
partition, then the partition $\mathcal{T}_{h}(t)=X(\mathcal{T}_{h},t_{n};t)$
composed of curved elements of class $C^{k,1}$ is also geometrically
conforming.
\end{lemma}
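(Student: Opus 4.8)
The plan is to exploit the fact that, for $q\le n\le N$ and $t\in I_n$, the flow map $y=X(\cdot,t_n;t):\overline{\Omega}\to\overline{\Omega}$ is a global bijection --- a $C^{k,1}$-diffeomorphism by Lemma \ref{solX}, together with the invariance of $\overline{\Omega}$ noted after (\ref{int3}) --- whose composition with the reference charts gives the new charts, $F_l(t)=X(\cdot,t_n;t)\circ F_l$. By Lemma \ref{lTht2}(A) each $T_l(t)=X(T_l,t_n;t)$ is itself a curved $d$-simplex of class $C^{k,1}$, so it only remains to check the incidence conditions. Since $X(\cdot,t_n;t)$ is a bijection, for $i\neq j$
\[
T_i(t)\cap T_j(t)=X(T_i,t_n;t)\cap X(T_j,t_n;t)=X(T_i\cap T_j,\,t_n;t),
\]
and, $X(\cdot,t_n;t)$ being a homeomorphism, $\operatorname{int}T_l(t)=X(\operatorname{int}T_l,t_n;t)$; hence the interiors of the $T_l(t)$ are pairwise disjoint and $\bigcup_l T_l(t)=X(\overline{\Omega},t_n;t)=\overline{\Omega}$, so $\mathcal{T}_h(t)$ is indeed a partition of $\overline{\Omega}$.

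Next I would invoke the definition of geometric conformity for curved simplices from the Appendix (following \cite{Bern}): $\mathcal{T}_h$ being geometrically conforming means that whenever $T_i\cap T_j\neq\emptyset$ there are faces $\widehat{S}_i,\widehat{S}_j$ of the reference simplex $\widehat{T}$, of a common dimension $0\le m\le d-1$, and an affine bijection $A:\widehat{S}_i\to\widehat{S}_j$ with $T_i\cap T_j=F_i(\widehat{S}_i)=F_j(\widehat{S}_j)$ and $F_i|_{\widehat{S}_i}=F_j|_{\widehat{S}_j}\circ A$. Composing these identities on the left with $X(\cdot,t_n;t)$ and using $F_l(t)=X(\cdot,t_n;t)\circ F_l$ yields
\[
T_i(t)\cap T_j(t)=X(T_i\cap T_j,t_n;t)=F_i(t)(\widehat{S}_i)=F_j(t)(\widehat{S}_j),\qquad F_i(t)|_{\widehat{S}_i}=F_j(t)|_{\widehat{S}_j}\circ A,
\]
i.e.\ $T_i(t)$ and $T_j(t)$ meet along a common $m$-face carried by the very same reference faces $\widehat{S}_i,\widehat{S}_j$ and the very same affine map $A$. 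This is exactly the assertion that $\mathcal{T}_h(t)$ is geometrically conforming.

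No estimates enter; the whole argument is just a consequence of $X(\cdot,t_n;t)$ being a bijection that intertwines the charts $F_l$ and $F_l(t)$. The one point needing care --- and the place I expect the only real bookkeeping --- is the identification of the $m$-faces of the curved element $T_l(t)$, as produced by Bernardi's construction from the chart $F_l(t)$, with the images $X(G,t_n;t)$ of the $m$-faces $G$ of $T_l$; this is immediate once one recalls that the faces of a curved simplex are by definition the images under its chart of the faces of $\widehat{T}$ and that $F_l(t)=X(\cdot,t_n;t)\circ F_l$. With that identification in hand, the conformity of $\mathcal{T}_h(t)$ follows at once from the conformity of $\mathcal{T}_h$.
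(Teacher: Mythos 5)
Your argument is correct and is essentially the paper's own proof: both rest on the single observation that $F_l(t)=X(\cdot,t_n;t)\circ F_l$ with $X(\cdot,t_n;t)$ a bijection, so that composing the conformity identities of $\mathcal{T}_h$ with the flow map transports them verbatim to $\mathcal{T}_h(t)$. The paper states this only for shared $(d-1)$-faces with the identity matching map, whereas you additionally record the partition properties and the general $m$-face/affine-matching form of the definition, but the underlying idea is the same.
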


\begin{proof}
Let any pair of elements $T_{l},T_{j}\in \mathcal{T}_{h}$ and assume that
there is a $(d-1)-$face $\Gamma ^{lj}=T_{l}\cap T_{j}\subset
\Omega $. Since $T_{l}=F_{l}(\widehat{T})$ and $T_{j}=F_{j}(%
\widehat{T})$ and $\mathcal{T}_{h}$ is geometrically conforming, then there
is a face $\widehat{\Gamma }$ of $\widehat{T}$ such that $\Gamma
^{lj}=F_{l}(\widehat{\Gamma })=F_{j}(\widehat{\Gamma })$ and $\left.
F_{l}(\widehat{T})\right\vert _{\Gamma ^{lj}}=\left. F_{j}(\widehat{T%
})\right\vert _{\Gamma ^{lj}}$, see (\cite{EG}, Definition 1.55). Now, we consider
the elements $T_{l}(t)=F_{l}(t)(\widehat{T})=X(\cdot ,t_{n};t)\circ
F_{l}(\widehat{T})$ and $T_{j}(t)=F_{j}(t)(\widehat{T})=X(\cdot
,t_{n};t)\circ F_{j}(\widehat{T})$, so $F_{l}(t)(\widehat{\Gamma }%
)=X(\cdot ,t_{n};t)\circ F_{l}(\widehat{\Gamma })$ and $F_{j}(t)(%
\widehat{\Gamma })=X(\cdot ,t_{n};t)\circ F_{j}(\widehat{\Gamma })$.
Hence, $\left. F_{l}(t)(\widehat{T})\right\vert _{\Gamma ^{lj}}=X(\cdot
,t_{n};t)\circ \left. F_{l}(\widehat{T})\right\vert _{\Gamma ^{lj}}$, $%
\left. F_{j}(t)(\widehat{T})\right\vert _{\Gamma ^{lj}}=X(\cdot
,t_{n};t)\circ \left. F_{j}(\widehat{T})\right\vert _{\Gamma ^{lj}}$ and
given that$\left. F_{l}(\widehat{T})\right\vert _{\Gamma ^{lj}}=\left.
F_{j}(\widehat{T})\right\vert _{\Gamma ^{lj}}$ it follows that there is
a $(d-1)-$face $\Gamma ^{lj}(t)=T_{l}(t)\cap T_{j}(t)\subset
\Omega _{h}(t)$ such that $\Gamma ^{lj}(t)=F_{T_{l}}(t)(\widehat{\Gamma }%
)=F_{T_{j}}(t)(\widehat{\Gamma })$.
\end{proof}

Using the same arguments it can be shown that the partitions $\widetilde{%
\mathcal{T}}_{h}(t)$ formed by the isoparametric simplices of type $\mathrm{%
(k)}$ are also conforming.

Now, by virtue of the Definitions \ref{definition2.5}-\ref{definition5} of
the Appendix we are in a position to introduce the $H^{1}-$conforming finite
element spaces $V_{h}(t)\subset H^{1}(\Omega (t))$ associated with the
partitions $\mathcal{T}_{h}(t)$. Thus,%
\begin{equation}
\left\{
\begin{array}{l}
V_{h}(t):=\left\{ u_{h}\in C(\overline{\Omega }_{h}(t)):\forall T_{l}(t)\in
\mathcal{T}_{h}(t)\ \left. u_{h}\right\vert _{T_{l}(t)}\in P(T_{l}(t)),\
1\leq l\leq NE\right\} , \\
\\
P(T_{l}(t)):=\left\{ p(y)=\phi _{t_{n}}^{t}p(x)\ \ \mathrm{with\ \ }p(x)\in
P(T_{l})\right\} .%
\end{array}%
\right.  \label{se3:2}
\end{equation}%
So, if $\left\{ \chi _{j}(x)\right\} _{j=1}^{M}$ is the set of global basis
functions of $V_{h}$, then the set$\left\{ \phi _{t_{n}}^{t}\chi
_{j}(x)\right\} _{j=1}^{M}=\left\{ \chi _{j}(X(x,t_{n};t))\right\}
_{j=1}^{M} $ is the set of global basis functions of $V_{h}(t)$ and any
function $u_{h}(X(x,t_{n};t),t)\in V_{h}(t)$ is expressed as%
\begin{equation}
u_{h}(X(x,t_{n};t),t)=\sum_{j=1}^{M}U_{j}(t)\chi _{j}(X(x,t_{n};t)).
\label{se3:4}
\end{equation}

Returning to the construction of the partitions $\widetilde{\mathcal{T}}%
_{h}(t)$ and $\mathcal{T}_{h}(t)$ we recall, see (\ref{se3:5}) and (\ref%
{se3:1}), that for each $l$
\begin{equation*}
\widetilde{\mathcal{T}}_{h}(t)\ni \widetilde{T}_{l}(t)=\widetilde{F}_{l}(t)(%
\widehat{T})\ \ \mathrm{and\ \ }\mathcal{T}_{h}(t)\ni T_{l}(t)=F_{l}(t)(%
\widehat{T}).
\end{equation*}
So, noting that for all$\ \widehat{x}\in \widehat{T}$ there is one and only
one $x\in T_{l}$ such that $\widehat{x}=F_{l}^{-1}(x)$, we have that for any
point $x\in T_{l}$ there is one and only one $\widetilde{y}(t)\in \widetilde{%
T}_{l}(t)$ and a unique $y(t)\in T_{l}(t)$ such that
\begin{equation}
\widetilde{y}(t)=\widetilde{F}_{l}(t)\circ F_{l}^{-1}(x)\ \ \mathrm{and\ \ }
y(t)=F_{l}(t)\circ F_{l}^{-1}(x).  \label{se3:121}
\end{equation}%
Since by virtue of Lemma \ref{lTht2} (ii)$\ \widetilde{F}_{l}(t)$ is a
mapping of class $C^{k+1}(\widehat{T})$ and by virtue of the theorem of the
inverse function $F_{l}^{-1}(x)$ is also a mapping of class $C^{k+1}(T_{l})$%
, then we can define a local mapping of class $C^{k+1}(T_{l}),$ $\widetilde{X%
}_{l}(\cdot ,t_{n};t):T_{l}\rightarrow \widetilde{T}_{l}(t)$ as $x\in
T_{l}\rightarrow $\ $\widetilde{X}_{l}(x,t_{n};t)=\widetilde{F}_{l}(t)\circ
F_{l}^{-1}(x)\in \widetilde{T}_{l}(t)$. Hence, we can construct the global
mapping
\begin{equation*}
\widetilde{X}(\cdot ,t_{n};t):\Omega \rightarrow \widetilde{\Omega }_{h}(t)
\text{\ \ \ \textrm{as\ \ \ }}\mathbf{\forall }l,\ \ \left. \widetilde{X}%
(x,t_{n};t)\right\vert _{T_{l}}=\widetilde{X}_{l}(x,t_{n};t).
\end{equation*}%
The mapping $F_{l}(t)\circ F_{l}^{-1}(x)$ is of class $C^{k,1}(T_{l})$ with $%
F_{l}(t)\circ F_{l}^{-1}(x)=\left. X(x,t_{n};t)\right\vert _{T_{l}}$,\ and
by virtue of Lemma \ref{solX} $X(x,t_{n};t)$ is a mapping of class $C^{k,1}$
from $\Omega \rightarrow \Omega _{h}(t)$.\ Though $\widetilde{F}_{l}(t)$ and
$F_{l}^{-1}$ are mappings of class $C^{k+1}$, the mapping $\widetilde{X}(\cdot,t_{n};t)$ is of class $C^{0,1}(\Omega )$ instead. This result is stated in the next lemma.

\begin{lemma}
\label{X-tilde} For all $n\geq q$ and $t_{n-q}\leq t<t$, the mapping $%
\widetilde{X}(\cdot ,t_{n};t):\Omega \rightarrow \widetilde{\Omega }_{h}(t)$
is of class $C^{0,1}(\Omega ).$
\end{lemma}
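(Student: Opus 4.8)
The plan is to exploit the elementwise definition $\left.\widetilde X(\cdot,t_n;t)\right|_{T_l}=\widetilde X_l(\cdot,t_n;t)=\widetilde F_l(t)\circ F_l^{-1}$ and to argue in two stages: first that each local piece $\widetilde X_l(\cdot,t_n;t)$ is Lipschitz on the closed element $\overline T_l$, and then that these pieces glue together into a function that is continuous — indeed well defined — on all of $\overline\Omega$; the conclusion $\widetilde X(\cdot,t_n;t)\in C^{0,1}(\Omega)$ then follows from the identification $W^{1,\infty}(\Omega)=C^{0,1}(\overline\Omega)$ valid for the Lipschitz domain $\Omega$.

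\emph{Local regularity.} By Lemma~\ref{lTht2}(B)(i), $\widetilde F_l(t)=\widehat I_kF_l(t)\in\widehat P_k(\widehat T)^d$ is a polynomial mapping, hence of class $C^{k+1}(\widehat T)$; and $F_l$ being a $C^{k+1}$-diffeomorphism of $\widehat T$ onto $T_l$, its inverse $F_l^{-1}$ is of class $C^{k+1}(T_l)$ by the inverse function theorem. The composition $\widetilde X_l(\cdot,t_n;t)=\widetilde F_l(t)\circ F_l^{-1}$ is therefore of class $C^{k+1}(T_l)$, in particular Lipschitz on the compact set $\overline T_l$. Since the partition $\mathcal T_h$ is fixed (finitely many elements $NE$), the supremum $L$ of the Lipschitz constants of $\widetilde X_1(\cdot,t_n;t),\dots,\widetilde X_{NE}(\cdot,t_n;t)$ is finite.

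\emph{Gluing across faces.} This is the substantive step. It suffices to show that $\widetilde X_l=\widetilde X_j$ on every common $(d-1)$-face $\Gamma^{lj}=T_l\cap T_j\subset\overline\Omega$, since on a geometrically conforming mesh continuity across $(d-1)$-faces propagates — through the chain of faces surrounding a lower-dimensional simplex — to agreement on the whole skeleton, so the elementwise definition yields a well-defined continuous function on $\overline\Omega$. Because $\mathcal T_h$ is geometrically conforming (cf.\ the proof of Lemma~\ref{conformal}), there is a face $\widehat\Gamma$ of the reference simplex with $F_l(\widehat\Gamma)=F_j(\widehat\Gamma)=\Gamma^{lj}$ and $\left.F_l\right|_{\widehat\Gamma}=\left.F_j\right|_{\widehat\Gamma}$, so for $x\in\Gamma^{lj}$ one has $\widehat x:=F_l^{-1}(x)=F_j^{-1}(x)\in\widehat\Gamma$. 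By the trace property of Lagrange interpolation on a simplex, $\left.(\widehat I_kg)\right|_{\widehat\Gamma}$ is determined by the values of $g$ at the interpolation nodes $\widehat x_i$ lying on $\widehat\Gamma$; and for each such node, using that $\widehat I_k$ reproduces nodal values, that $F_l(t)=X(\cdot,t_n;t)\circ F_l$, and that $F_l(\widehat x_i)=F_j(\widehat x_i)$,
\[
\widetilde F_l(t)(\widehat x_i)=F_l(t)(\widehat x_i)=X(F_l(\widehat x_i),t_n;t)=X(F_j(\widehat x_i),t_n;t)=F_j(t)(\widehat x_i)=\widetilde F_j(t)(\widehat x_i).
\]
Hence $\left.\widetilde F_l(t)\right|_{\widehat\Gamma}=\left.\widetilde F_j(t)\right|_{\widehat\Gamma}$, and therefore $\widetilde X_l(x,t_n;t)=\widetilde F_l(t)(\widehat x)=\widetilde F_j(t)(\widehat x)=\widetilde X_j(x,t_n;t)$ for every $x\in\Gamma^{lj}$.

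\emph{Conclusion.} Being continuous on $\overline\Omega$ and piecewise $C^{k+1}$ with respect to $\mathcal T_h$, the map $\widetilde X(\cdot,t_n;t)$ has distributional gradient with no singular part along the interelement faces; it coincides with the elementwise classical gradient, which is bounded by $L$, so $\widetilde X(\cdot,t_n;t)\in W^{1,\infty}(\Omega)$. Since $\Omega$ has Lipschitz boundary, $W^{1,\infty}(\Omega)=C^{0,1}(\overline\Omega)$ and the lemma follows. The main obstacle is the gluing step, where one must simultaneously use the geometric conformity of the fixed mesh $\mathcal T_h$ (to force $F_l$ and $F_j$ to share the same parametrization of $\Gamma^{lj}$) and the face-restriction property of $\widehat I_k$; this matching of boundary \emph{values} but not of normal \emph{derivatives} across $\Gamma^{lj}$ is exactly why no regularity better than $C^{0,1}$ can be expected globally.
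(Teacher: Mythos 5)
Your proof is correct and follows essentially the same route as the paper: the paper writes $\widetilde X_l(x,t_n;t)=\sum_{j}X(a_j^{(l)},t_n;t)\chi_j^{(l)}(x)$, notes that the nodal coefficients depend only on the node (not the element), so $\widetilde X(\cdot,t_n;t)\in V_h\subset W^{1,\infty}(\Omega)$, and concludes $C^{0,1}$ via the $W^{1,\infty}$--Lipschitz identification (Evans, Thm.\ 5.8.4). Your explicit gluing argument across shared faces is simply an unpacking of the one-line assertion that this piecewise expression defines an element of the globally continuous space $V_h$.
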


\begin{proof}
Noting that if $NN$ denotes the number of nodes of the element $T_{l}$ and $%
\{\chi _{j}^{(l)}(x)\}_{j=1}^{NN}$ is the set of local basis functions of $%
P(T_{l})$, we can set (see (\ref{se3:5}))
\begin{equation*}
\widetilde{X}_{l}(x,t_{n};t)=\widetilde{F}_{l}(t)\circ
F_{l}^{-1}(x)=\sum_{j=1}^{NN}X(a_{j}^{(l)},t_{n};t)\chi _{j}^{(l)}(x),
\end{equation*}%
then $\widetilde{X}(x,t_{n};t)\in V_{h},$ see (\ref{se2:85}); so $\widetilde{%
X}(x,t_{n};t)\in W^{1,\infty }(\Omega )$, and by virtue of Theorem 5.8.4 of
\cite{Evans} $\widetilde{X}(x,t_{n};t)\in C^{0,1}(\Omega )$.
\end{proof}

Based on this lemma we can associate with the partition $\widetilde{\mathcal{%
T}}_{h}(t)$ the $H^{1}-$conforming finite element space $\widetilde{V}%
_{h}(t) $ defined as
\begin{equation*}
\left\{
\begin{array}{l}
\widetilde{V}_{h}(t):=\left\{ u_{h}\in C(\overline{\widetilde{\Omega }}%
_{h}(t)):\forall \widetilde{T}_{l}(t)\in \widetilde{\mathcal{T}}_{h}(t)\
\left. u_{h}\right\vert _{\widetilde{T}_{l}(t)}\in P(\widetilde{T}_{l}(t)),\
1\leq l\leq NE\right\} , \\
\\
P(\widetilde{T}_{l}(t)):=\left\{ p(\widetilde{y})=\widehat{p}\circ
\widetilde{F}_{l}^{-1}(t)(\widetilde{y})\ \mathrm{with\ }\widehat{p}(%
\widehat{x})\in \widehat{P}_{k}(\widehat{T})\right\} .%
\end{array}
\right.
\end{equation*}
Hence, $\widetilde{V}_{h}(t)=Span\left\{ \chi _{j}(\widetilde{X}%
(x,t_{n};t))\right\} $. Note that $\chi _{j}(\widetilde{X}(x,t_{n};t))=\chi
_{j}(x)$,

\begin{remark}
\label{diffeomorphism} Notice that by construction $X(x,t_{n};t)$ is a
diffeomorphism from $\Omega $ onto $\Omega _{h}(t)$, and given that $\Omega
_{h}(t)=\Omega $ because $\Omega $ is invariant, then $X(x,t_{n};t)$ is a
diffeomorphisn from $\Omega $ onto itself. As for $\widetilde{X}(x,t_{n};t)$%
, by construction it is a homeomorphism from $\Omega $ onto $\widetilde{%
\Omega }_{h}(t)$, but not from $\Omega $ onto itself because, although $%
\Omega \cap \widetilde{\Omega }_{h}(t)\neq \left\{ \emptyset \right\} $, $%
\Omega \neq \widetilde{\Omega }_{h}(t)$.
\end{remark}

\begin{corollary}
\label{corol1} Under the conditions of Lemma \ref{lTht} and for $m$ integer,
$0\leq m\leq 1$, it holds that
\begin{equation}
\left\vert X(x,t_{n};t)-\widetilde{X}(x,t_{n};t)\right\vert _{W^{m.\infty
}(\Omega )^{d}}\leq Ch^{k+1-m}\Delta t\left\Vert u\right\Vert _{C(\left[ 0,T%
\right] ;W^{k+1,\infty }(\Omega )^{d})}.  \label{se3:13}
\end{equation}
where the constant $C$ depends on $q$, $k$ and $\sigma ^{\ast }$, and when $%
m=0$ it is understood that
\begin{equation*}
\left\vert X(x,t_{n};t)-\widetilde{X}(x,t_{n};t)\right\vert _{W^{m.\infty
}(\Omega )^{d}}=\left\Vert X(x,t_{n};t)-\widetilde{X}(x,t_{n};t)\right\Vert
_{L^{\infty }(\Omega )^{d}}.
\end{equation*}
\end{corollary}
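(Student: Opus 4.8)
The plan is to localise the estimate to each curved element of the fixed mesh $\mathcal{T}_{h}$ and then read it off from Lemma \ref{lTht}. The starting point is the elementwise factorisation recorded in (\ref{se3:121}) and in the paragraph preceding Lemma \ref{X-tilde}: on every $T_{l}\in\mathcal{T}_{h}$ one has $\left.X(x,t_{n};t)\right|_{T_{l}}=F_{l}(t)\circ F_{l}^{-1}(x)$ and $\left.\widetilde{X}(x,t_{n};t)\right|_{T_{l}}=\widetilde{F}_{l}(t)\circ F_{l}^{-1}(x)$, so that $\left.(X-\widetilde{X})\right|_{T_{l}}=(F_{l}(t)-\widetilde{F}_{l}(t))\circ F_{l}^{-1}$. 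Because $X\in C^{k,1}(\Omega)^{d}$ while, by Lemma \ref{X-tilde}, $\widetilde{X}\in C^{0,1}(\Omega)^{d}\subset W^{1,\infty}(\Omega)^{d}$, the weak derivatives of $X-\widetilde{X}$ coincide a.e.\ with the classical ones computed inside each $T_{l}$; hence for $m\in\{0,1\}$ I would use $|X-\widetilde{X}|_{W^{m,\infty}(\Omega)^{d}}=\max_{1\leq l\leq NE}|X-\widetilde{X}|_{W^{m,\infty}(T_{l})^{d}}$, which turns the statement into $NE$ local estimates pulled back to $\widehat{T}$.

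For $m=0$ the argument is immediate: composition with the bijection $F_{l}^{-1}$ leaves the supremum unchanged, so $\|X-\widetilde{X}\|_{L^{\infty}(T_{l})^{d}}=\|F_{l}(t)-\widetilde{F}_{l}(t)\|_{L^{\infty}(\widehat{T})^{d}}$, and (\ref{se3:12}) with $m=0$, together with $\overline{h}_{l}\leq h$ and $T_{l}(t)\subset\overline{\Omega}$, yields the bound $Ch^{k+1}\Delta t\,\|u\|_{C([0,T];W^{k+1,\infty}(\Omega)^{d})}$.

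For $m=1$ I would differentiate by the chain rule, obtaining $D_{x}\left[(F_{l}(t)-\widetilde{F}_{l}(t))\circ F_{l}^{-1}\right]=\left(D_{\widehat{x}}(F_{l}(t)-\widetilde{F}_{l}(t))\right)\circ F_{l}^{-1}\cdot D_{x}F_{l}^{-1}$, whence $|X-\widetilde{X}|_{W^{1,\infty}(T_{l})^{d}}\leq|F_{l}(t)-\widetilde{F}_{l}(t)|_{W^{1,\infty}(\widehat{T})^{d}}\,\|DF_{l}^{-1}\|_{L^{\infty}(T_{l})}$. The scaling factor is controlled through the curved--element structure: writing $F_{l}=\overline{F}_{l}+\Theta_{l}$ with $c_{T_{l}}=\sup_{\widehat{x}}\|D\Theta_{l}(\widehat{x})\overline{B}_{l}^{-1}\|<1$ from (\ref{se2:83}), the matrix $DF_{l}=(I+D\Theta_{l}\overline{B}_{l}^{-1})\overline{B}_{l}$ is invertible with $\|DF_{l}^{-1}\|\leq(1-c_{T_{l}})^{-1}\|\overline{B}_{l}^{-1}\|$, and quasi-uniform regularity of $\left(\mathcal{T}_{h}\right)_{h}$ gives $\|\overline{B}_{l}^{-1}\|\leq\widehat{h}/\overline{\rho}_{l}\leq c(\sigma^{\ast})\,\overline{h}_{l}^{-1}$. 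Combining this with (\ref{se3:12}) for $m=1$ produces $C\overline{h}_{l}^{k+1}\overline{h}_{l}^{-1}\Delta t\,\|u\|_{C([0,T];W^{k+1,\infty}(T_{l}(t))^{d})}$; then $\overline{h}_{l}\leq h$, the inclusion $T_{l}(t)\subset\overline{\Omega}$ giving $\|u\|_{C([0,T];W^{k+1,\infty}(T_{l}(t))^{d})}\leq\|u\|_{C([0,T];W^{k+1,\infty}(\Omega)^{d})}$, and the maximum over $l$ yield (\ref{se3:13}) with $C=C(q,k,\sigma^{\ast})$.

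None of the steps is genuinely hard; the two points that need a little attention are that $\widetilde{X}$ is only globally Lipschitz (Lemma \ref{X-tilde}), so the $W^{1,\infty}(\Omega)$ seminorm has to be handled element by element rather than by a global chain rule, and that the pull-back from $\widehat{T}$ to $T_{l}$ costs one factor $\|DF_{l}^{-1}\|\sim\overline{h}_{l}^{-1}$ --- this is precisely the mechanism that degrades the rate from $h^{k+1}$ (the pointwise bound of Lemma \ref{lTht}) to $h^{k+1-m}$.
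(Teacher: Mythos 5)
Your proof is correct and follows essentially the same route as the paper: localize the seminorm to the elements $T_{l}$, use the factorization $\left.(X-\widetilde{X})\right|_{T_{l}}=(F_{l}(t)-\widetilde{F}_{l}(t))\circ F_{l}^{-1}$, and pull back to $\widehat{T}$ where Lemma \ref{lTht} applies, paying one factor $\|\overline{B}_{l}^{-1}\|\sim\overline{h}_{l}^{-1}$ per derivative. The only difference is presentational: the paper invokes Lemma \ref{lemma7} of the Appendix for the pull-back, whereas you carry out its $p=\infty$, $m\leq 1$ instance by hand via the chain rule and the bound $\|DF_{l}^{-1}\|\leq(1-c_{T_{l}})^{-1}\|\overline{B}_{l}^{-1}\|$ from Lemma \ref{lemma1_apen}, and you additionally justify the elementwise reduction of the $W^{1,\infty}$ seminorm, a point the paper leaves implicit.
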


\begin{proof}
First, we observe that
\begin{equation*}
\left\vert X(x,t_{n};t)-\widetilde{X}(x,t_{n};t)\right\vert _{W^{m.\infty
}(\Omega )^{d}}=\max_{l}\left\vert X(x,t_{n};t)-\widetilde{X}%
(x,t_{n};t)\right\vert _{W^{m.\infty }(T_{l})^{d}}.
\end{equation*}%
Recalling the definitions of $X(x,t_{n};t)$ and $\widetilde{X}(x,t_{n};t)$
we have that for all $l$
\begin{equation*}
\left\vert X(x,t_{n};t)-\widetilde{X}(x,t_{n};t)\right\vert _{W^{m.\infty
}(T_{l})^{d}}=\left\vert \left( F_{l}(t)-\widetilde{F}_{l}(t)\right) \circ
F_{l}^{-1}(x)\right\vert _{W^{m.\infty }(T_{l})^{d}}.
\end{equation*}%
Hence, by virtue of Lemma \ref{lemma7} of the Appendix, (\ref{se3:12}) and
the relations
\begin{equation*}
\left\Vert u\right\Vert _{C(\left[ 0,T\right] ;W^{k+1,\infty
}(T_{l}(t))^{d})}\leq \left\Vert u\right\Vert _{C(\left[ 0,T\right]
;W^{k+1,\infty }(\Omega _{h}(t))^{d})}\leq C\left\Vert u\right\Vert _{C(%
\left[ 0,T\right] ;W^{k+1,\infty }(\Omega )^{d})},
\end{equation*}
the estimate (\ref{se3:13} ) follows.
\end{proof}

This result is useful to define a bounded open domain $\mathcal{O}$, such
that both $\Omega _{h}(t)\subset \subset \mathcal{O}$ and $\widetilde{\Omega
}_{h}(t)\subset \subset \mathcal{O}$, so that on the account that $\partial
\Omega _{h}(t)$ is piecewise smooth and Lipschitz continuous, we can
introduce the continuous linear extension operator $E_{t}:W^{1,p}(\Omega
_{h}(t))\rightarrow W^{1,p}(\mathcal{\mathbb{R}}^{d})$ with the properties $%
E_{t}v=v$ a.e in $\Omega _{h}(t)$, $E_{t}v$ has a support within $\mathcal{O}
$, and for$\mathrm{\ }m\geq 0$ \textrm{and}$\ p\in \lbrack 1,\infty ]$ there
is a constant $C=C(\Omega ,\mathcal{O})$ such that
\begin{equation}
\left\Vert E_{t}v\right\Vert _{W^{m,p}(\mathcal{O})}\leq C\left\Vert
v\right\Vert _{W^{m,p}(\Omega _{h}(t))}.  \label{se3:130}
\end{equation}%
To this end, we consider the sign distance function \cite{ER1}
\begin{equation*}
d(x,t)=\left\{
\begin{array}{l}
-\underset{y\in \partial \Omega_{h}(t)}{\inf }\left\vert x-y\right\vert \
\mathrm{if}\ \ x\notin \Omega _{h}(t), \\
 \ \ \ \ \ \ \ \ \ \ \ \ \ \ \ 0\ \ \ \ \ \mathrm{if}\ \ x\in
\partial \Omega _{h}(t), \\
 \ \underset{y\in \partial \Omega_{h}(t)}{\inf }\left\vert x-y\right\vert \ \
\mathrm{if}\ \ x\in \Omega _{h}(t),%
\end{array}%
\right.
\end{equation*}%
here, $\left\vert \cdot \right\vert $ denotes the Euclidean distance between
the points $x$ and $y$. Noting that $\left\vert \nabla d(x,t)\right\vert =1$
in a neighborhood of $\partial \Omega _{h}(t)$, then the normal vector at
almost every point $x\in \partial \Omega _{h}(t)$ is $\nu (x,t)=\nabla
d(x,t) $. In view of (\ref{se3:13}), let $r_{\partial \Omega
}=d^{1/2}Ch^{k+1}\Delta t\left\Vert u\right\Vert _{C(\left[ 0,T\right]
;W^{k+1,\infty }(\Omega )^{d})}$, we define a band about $\Omega _{h}(t)$,
see Figure 4 \begin{figure}[h]
    \centering
    \begin{tikzpicture}[scale=2.0, >=stealth]

        \coordinate (v0) at (-0.5,0.0);
        \coordinate (v1) at (0,0);
        \coordinate (v2) at (2,0.5);
        \coordinate (v3) at (4,0.75);
        \coordinate (v4) at (4.5,0.7);
        \draw[thick, darkgreen] plot[smooth, tension=1] coordinates{(v1) (v2) (v3)};
        \node[draw,circle,fill=black,inner sep=0pt,minimum size=0.10cm,label=right:]() at (v1) {};
        \node[draw,circle,fill=black,inner sep=0pt,minimum size=0.10cm,label=right:]() at (v2) {};
        \node[draw,circle,fill=black,inner sep=0pt,minimum size=0.10cm,label=right:]() at (v3) {};
        \draw[thick, darkgreen] plot[smooth, tension=1] coordinates{(v1)  ($0.5*(v1)+0.5*(v0)-(0,0.02)$) (v0)};
        \draw[thick, darkgreen] plot[smooth, tension=1] coordinates{(v3)  ($0.5*(v3)+0.5*(v4)+(0,0.02)$) (v4)};
        \node[left]() at (-0.5,-0.05) {\textcolor{darkgreen}{$\partial\widetilde{\Omega}_{h}(t)$}};

        \coordinate (w0) at (-0.5,0.10);
        \coordinate (w1) at (0.7,0.05);
        \coordinate (w2) at (3.1,0.77);
        \coordinate (w3) at (4.5,0.65);
        \draw[thick, red] plot[smooth, tension=0.5] coordinates{(w0) (v1) (w1) (v2) (w2) (v3) (w3)};
        \node[left]() at (-0.5,0.20) {\textcolor{red}{$\partial\Omega_{h}(t)$}};

        \draw[red, thick, dashed] plot[smooth, tension=0.5] coordinates{($(w0)+(0,0.5)$)($(v1)+(0,0.5)$) ($(w1)+(0,0.5)$) ($(v2)+(0,0.5)$) ($(w2)+(0,0.5)$) ($(v3)+(0,0.5)$) ($(w3)+(0,0.5)$)};
        \draw[red, thick, dashed] plot[smooth, tension=0.5] coordinates{($(w0)-(0,0.5)$) ($(v1)-(0,0.5)$) ($(w1)-(0,0.5)$) ($(v2)-(0,0.5)$) ($(w2)-(0,0.5)$) ($(v3)-(0,0.5)$) ($(w3)-(0,0.5)$)};

        \draw[<->] (3.1,0.77) -- (3.05,1.27) node[midway, right] {\small $r_{\partial\Omega}$};
        \draw[<->] (3.1,0.77) -- (3.15,0.27) node[midway, right] {\small $r_{\partial\Omega}$};

    \end{tikzpicture}
\caption {\ The band of width $2r_{\partial \Omega}$ about
$\Omega_{h}(t)$.}
\end{figure}
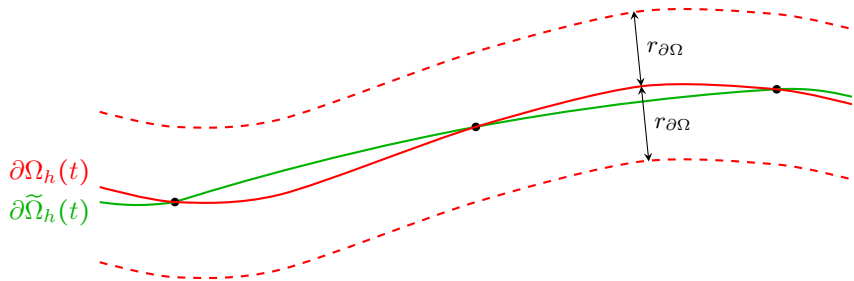
 \label{Fig4}, as $B_{r_{\partial \Omega }}(x,t)=\left\{ x\in \mathbb{R}%
^{d}:\left\vert d(x,t)\right\vert <r_{\partial \Omega }\right\} $.
On the account that $d(x,t)\in
C^{2}(B_{r_{\partial \Omega }}(x,t))$, one can define for any $x\in
B_{r_{\partial \Omega }}(x,t)$ the normal projection $p(x)$ on the boundary $
\partial \Omega _{h}(t)$ as
\begin{equation*}
x=p(x)+d(x,t)\nu (p(x),t).
\end{equation*}%
Since this decomposition is unique, then $\nu $ can be extended to a vector
field on all of $B_{r_{\partial \Omega }}(x,t)$ by letting $\nu (x,t)=\nu
(p(x),t)$. Based on these considerations and given that $\overline{\Omega }%
_{h}(t)=\overline{\Omega }$, we define the bounded domain\ $\mathcal{O}$ as\
$\mathcal{O}:=\overline{\Omega }\bigcup B_{r_{\partial \Omega }}(x,t)$ with\
$\partial \mathcal{O}:=\left\{ y\in \mathbb{R}^{d}:y=x+r_{\partial \Omega
}\nu (x,t),\ x\in \partial \Omega \right\} $.

Hereafter, unless otherwise stated, we shall use the shorthand notations $\widetilde{X}^{n,n-i}(x)$ and $X^{n,n-i}(x)$ to denote $\widetilde{X}(x,t_{n};t_{n-i})$ and $X(x,t_{n};t_{n-i})$ respectively, and $%
a^{n-i}(X^{n,n-i}(x))$ and $a^{n-i}(\widetilde{X}^{n,n-i}(x))$ to denote $%
a(X(x,t_{n};t_{n-i}),t_{n-i})$ and $a(\widetilde{X}(x,t_{n};t_{n-i}),t_{n-i})$
respectively.

\begin{lemma}
\label{Halpha} Let $0\leq \alpha \leq 1$, and let $\Delta t \in
(0,\Delta t_{1})$ and $h \in (0,h_{1})$ be sufficiently small; then,
for all $n\geq q$ and $t_{n-i}\in \{t_{n-q},\ldots ,t_{n}\}$, the
mapping $H_{\alpha }^{n}:\Omega \rightarrow
\mathcal{O}$ defined as%
\begin{equation}
H_{\alpha }^{n}(x)=X^{n,n-i}(x)+\alpha (\widetilde{X}%
^{n,n-i}(x)-X^{n,n-i}(x)),  \label{se3:131}
\end{equation}%
is one-to one and of class $C^{0,1}$.\ Moreover, let $\frac{\partial
H_{\alpha }^{n}(x)}{\partial x}$ denote the Jacobian matrix of the
transformation, there exists a constant $C$ such that%
\begin{equation*}
1-C\Delta t\leq \det \left( \frac{\partial H_{\alpha }^{n}(x)}{\partial x}%
\right) \leq 1+C\Delta t.
\end{equation*}
\end{lemma}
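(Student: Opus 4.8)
The plan is to regard $H_\alpha^n$ as the convex combination $H_\alpha^n=X^{n,n-i}+\alpha\,\psi^{n,n-i}$ of the flow map $X^{n,n-i}$ and its isoparametric surrogate $\widetilde X^{n,n-i}$, where $\psi^{n,n-i}:=\widetilde X^{n,n-i}-X^{n,n-i}$ is controlled by Corollary~\ref{corol1}: since $|t_n-t_{n-i}|\le q\,\Delta t$ and $0<h<1$, that corollary gives $\|\psi^{n,n-i}\|_{L^\infty(\Omega)^d}\le C\,h^{k+1}\Delta t\,\|u\|_{C([0,T];W^{k+1,\infty}(\Omega)^d)}$ and $|\psi^{n,n-i}|_{W^{1,\infty}(\Omega)^d}\le C\,\Delta t\,\|u\|_{C([0,T];W^{k+1,\infty}(\Omega)^d)}$, with $C$ depending only on $q$, $k$ and $\sigma^\ast$. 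The first bound, together with the choice of $r_{\partial\Omega}$ and the definition of $\mathcal O$, shows that $H_\alpha^n(x)$ stays within distance $r_{\partial\Omega}$ of $\overline\Omega_h(t_{n-i})=\overline\Omega$, so $H_\alpha^n(\Omega)\subset\mathcal O$. The $C^{0,1}$-regularity is then immediate: $X^{n,n-i}$ is of class $C^{k,1}(\Omega)\subset C^{0,1}(\Omega)$ by Lemma~\ref{solX}, $\widetilde X^{n,n-i}$ is of class $C^{0,1}(\Omega)$ by Lemma~\ref{X-tilde}, and a convex combination of Lipschitz maps is Lipschitz.

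The step I expect to carry the argument is injectivity, which I would obtain by perturbing the bi-Lipschitz estimate (\ref{int4}). If $H_\alpha^n(x)=H_\alpha^n(y)$ then $X^{n,n-i}(x)-X^{n,n-i}(y)=-\alpha\bigl(\psi^{n,n-i}(x)-\psi^{n,n-i}(y)\bigr)$; the norm of the right-hand side is at most $|\psi^{n,n-i}|_{W^{1,\infty}(\Omega)^d}\,|x-y|\le C\Delta t\,|x-y|$, while the lower half of (\ref{int4}) bounds the left-hand side below by $K^{-1}|x-y|$ with $K=\exp\bigl((t_n-t_{n-i})\,|u|_{L^\infty(0,T;W^{1,\infty}(\Omega))}\bigr)\le\exp(Cq\,\Delta t)$, so $K^{-1}\ge 1-C\Delta t$. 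Hence $(1-C\Delta t)|x-y|\le C\Delta t\,|x-y|$, and for $\Delta t$ small this forces $x=y$; this is the only place where $\Delta t_1$ would have to be shrunk beyond Remark~\ref{hdelta_t}.

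For the Jacobian determinant I would argue at almost every $x\in\Omega$, where the Jacobian matrices of $X^{n,n-i}$ and of $\widetilde X^{n,n-i}$ both exist (by Rademacher, both maps being Lipschitz, and in any case off the skeleton of $\mathcal T_h$ where $\widetilde X^{n,n-i}$ fails to be smooth): there $\partial H_\alpha^n/\partial x=F+\alpha\,\partial\psi^{n,n-i}/\partial x$, with $F:=\partial X^{n,n-i}/\partial x$ the Jacobian matrix of the flow map and $\det F=J(x,t_n;t_{n-i})$. From (\ref{int4}) one has $\|F\|_{L^\infty(\Omega)}\le K\le C$, and $\|\alpha\,\partial\psi^{n,n-i}/\partial x\|_{L^\infty(\Omega)^{d\times d}}\le|\psi^{n,n-i}|_{W^{1,\infty}(\Omega)^d}\le C\Delta t$; since $M\mapsto\det M$ is Lipschitz on bounded sets of $d\times d$ matrices, $\bigl|\det(\partial H_\alpha^n/\partial x)-\det F\bigr|\le C\Delta t$. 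On the other hand (\ref{s2:2}) with $|t_n-t_{n-i}|\le q\,\Delta t$ gives $\exp(-C_{\mathrm{div}}q\,\Delta t)\le\det F\le\exp(C_{\mathrm{div}}q\,\Delta t)$, hence $|\det F-1|\le C\Delta t$ for $\Delta t$ small. Adding the two estimates yields $1-C\Delta t\le\det(\partial H_\alpha^n/\partial x)\le 1+C\Delta t$ a.e., as claimed.

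The genuine subtlety, rather than the computations, is the mismatch between the merely Lipschitz regularity of $\widetilde X^{n,n-i}$ and the pointwise flavour of the statement: the determinant inequality has to be read almost everywhere, and the injectivity argument must be phrased using only the global Lipschitz constant $|\psi^{n,n-i}|_{W^{1,\infty}}$, not a pointwise Jacobian. Everything else is bookkeeping with Corollary~\ref{corol1}, (\ref{int4}) and (\ref{s2:2}), choosing $\Delta t_1$ small enough that all the ``$\Delta t$ small'' requirements above hold simultaneously.
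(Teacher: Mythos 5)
Your proposal is correct and follows the paper's own proof almost step for step. The $C^{0,1}$ regularity (from Lemma \ref{solX}, Lemma \ref{X-tilde} and the inclusion $C^{k,1}\subset C^{0,1}$) and the injectivity argument --- setting $g=\widetilde{X}^{n,n-i}-X^{n,n-i}$, bounding $|g(x_{1})-g(x_{2})|\leq Ch^{k}\Delta t\,|x_{1}-x_{2}|$ via Corollary \ref{corol1} and Rademacher, and playing this against the lower bound $K^{-1}|x_{1}-x_{2}|$ from (\ref{int4}) for $\Delta t$, $h$ small --- are exactly the paper's. The only genuine divergence is in the determinant bound: the paper deduces it from two-sided Lipschitz estimates on $H_{\alpha }^{n}$ itself (the $\limsup$ bounds $K(1+\frac{Ch^{k}}{K}\Delta t)^{\pm 1}$ combined with $K=1+C\Delta t$), whereas you decompose the Jacobian matrix as $F+\alpha\,D\psi ^{n,n-i}$, use $\det F=J(x,t_{n};t_{n-i})$ together with (\ref{s2:2}), and invoke the local Lipschitz continuity of $\det$ on bounded sets of matrices. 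Your version is the more explicit of the two and isolates where the constant comes from ($C_{\mathrm{div}}$ versus the perturbation $D\psi^{n,n-i}$); it also forces the almost-everywhere reading of the inequality into the open, a point the paper passes over silently, as it does the verification that $H_{\alpha }^{n}(\Omega )\subset \mathcal{O}$, which you supply from the choice of $r_{\partial \Omega }$. Both routes deliver the same conclusion under the same smallness assumptions on $h$ and $\Delta t$.
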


\begin{proof}
First, we prove that $H_{\alpha }^{n}(x)$ is of class $C^{0,1}$. This
readily follows from the fact that\ $X^{n,n-i}(x)$ is of class $%
C^{k,1}(\Omega )$, $\widetilde{X}^{n,n-i}(x)$ is of class $C^{0,1}(\Omega )$
and $C^{k,1}(\Omega )\subset C^{0,1}(\Omega )$. Next, we prove that $%
H_{\alpha }^{n}(x)$ is a one-to-one mapping by contradiction. To this end,
we introduce the function $g(x):=\widetilde{X}^{n,n-i}(x)-X^{n,n-i}(x)$, $%
g(x)\in C^{0,1}(\Omega )$. Let $x_{1},x_{2}\in \Omega $, $x_{1}\neq x_{2}$,
then by Radamacher's Theorem \cite[Chapter 5]{Evans}
\begin{equation*}
\left\vert g(x_{1})-g(x_{2})\right\vert \leq \left\vert g\right\vert
_{W^{1,\infty }(\Omega )^{d}}\left\vert x_{1}-x_{2}\right\vert ,
\end{equation*}%
and from (\ref{se3:13}) it follows that there is constant $C$ depending on $%
\left\Vert u\right\Vert _{L^{\infty }(0,T;W^{k+1,\infty }(\Omega )^{d})},$ $%
q,m$ and $\sigma ^{\ast }$ such that
\begin{equation*}
\left\vert g(x_{1})-g(x_{2})\right\vert \leq Ch^{k}\Delta t\left\vert
x_{1}-x_{2}\right\vert .
\end{equation*}%
Now, if $H_{\alpha }^{n}(x_{1})=H_{\alpha }^{n}(x_{2})$ we can set that
\begin{equation*}
X^{n,n-i}(x_{1})-X^{n,n-i}(x_{2})=-\alpha (g(x_{1})-g(x_{2})),
\end{equation*}%
and from (\ref{int4})%
\begin{equation*}
K^{-1}\left\vert x_{1}-x_{2}\right\vert \leq \left\vert
g(x_{1})-g(x_{2})\right\vert \leq Ch^{k}\Delta t\left\vert
x_{1}-x_{2}\right\vert \text{,}
\end{equation*}%
but for $h$ and $\Delta t$ sufficiently small, that is, for $0<h<h_{s}<h_{1}$  and $0<\Delta t<\Delta t_{s}<\Delta t_{1}$, $CKh^{k}\Delta t<1$; so, $x_{1}=x_{2}$. We recall that $\Delta t_{1}$ and $h_{1}$ are the parameters discussed in Remark \ref{hdelta_t}.
Moreover, we notice that%
\begin{equation*}
\begin{array}{l}
\lim \sup_{x_{1}\rightarrow x_{2}}\displaystyle\frac{\left\vert H_{\alpha
}^{n}(x_{1})-H_{\alpha }^{n}(x_{2})\right\vert }{\left\vert
x_{1}-x_{2}\right\vert }\leq K(1+\frac{Ch^{k}}{K}\Delta t)\ \ \mathrm{and}
\\
\\
\lim \sup_{H_{\alpha }(x_{1})\rightarrow H_{\alpha }(x_{2})}\displaystyle%
\frac{\left\vert x_{1}-x_{2}\right\vert }{\left\vert H_{\alpha
}^{n}(x_{1})-H_{\alpha }^{n}(x_{2})\right\vert }\geq
[K(1+\frac{Ch^{k}}{K}
\Delta t)]^{-1}.%
\end{array}%
\end{equation*}%
Since we can set $K=1+C\Delta t$, from these inequalities it follows that
there is another constant $C$ such that
\begin{equation*}
1-C\Delta t\leq \det \left( \frac{\partial H_{\alpha }^{n}(x)}{\partial x}%
\right) \leq 1+C\Delta t.
\end{equation*}
\end{proof}

The usefulness of the extension operator and Lemma \ref{Halpha} is
demonstrated in the proof of the next result which is important for the
stability and error analysis of the method.

\begin{lemma}
\label{X-X} Assume that $u\in C(\left[ 0,T\right] ;W^{k+1,\infty
}(\Omega )^{d})$, $v\in L^{\infty }(0,T;H^{1}(\Omega ))$, $\Delta
t\in (0,\Delta t_{s})$ and $h\in
(0,h_{s})$. Consider the continuous linear extension operator $%
E_{t_{n-i}}:W^{1,p}(\Omega _{h}(t_{n-i}))\rightarrow W^{1,p}(\mathcal{%
\mathbb{R}}^{d})$ with the properties $E_{t_{n-i}}v=v$ a.e in $\Omega
_{h}(t_{n-i})$, $E_{t_{n-i}}v$ has a support within $\mathcal{O}$.Then, for
all $n\geq q$ and $1\leq i\leq q$, there exists a constant $C(u)$
independent of $\Delta t$ and $h$ such that
\begin{equation}
\left\Vert v^{n-i}(X^{n,n-i}(x))-v^{n-i}(\widetilde{X}^{n,n-i}(x))\right%
\Vert _{L^{2}(\Omega )}\leq C(u)h^{k+1}\Delta t\left\Vert \nabla
v^{n-i}\right\Vert _{L^{2}(\Omega )^{d}},  \label{se3:16}
\end{equation}%
where $C(u)=C(\Omega ,\mathcal{O},q,k,\sigma ^{\ast },\left\Vert
u\right\Vert _{L^{\infty }(W^{k+1,\infty }(\Omega )^{d})})$.
\end{lemma}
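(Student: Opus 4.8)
The plan is to use the classical device for bounding the difference of the compositions of a Sobolev function with two nearby Lipschitz maps: write that difference as the integral of the gradient along the segments joining $X^{n,n-i}(x)$ to $\widetilde{X}^{n,n-i}(x)$ — precisely the path $\alpha\mapsto H_{\alpha}^{n}(x)$ of Lemma~\ref{Halpha} — and then control everything by a change of variables, drawing the factor $h^{k+1}\Delta t$ out of the $L^{\infty}$ closeness of the two maps supplied by Corollary~\ref{corol1}. The one preparatory observation is that $v^{n-i}$ lies only in $H^{1}(\Omega)$ and that $\widetilde{X}^{n,n-i}$ carries $\Omega$ onto $\widetilde{\Omega}_{h}(t_{n-i})\neq\Omega$; hence I would first replace $v^{n-i}$ by $\bar{v}:=E_{t_{n-i}}v^{n-i}\in W^{1,2}(\mathbb{R}^{d})$, which is supported in $\mathcal{O}$, agrees a.e.\ with $v^{n-i}$ on $\Omega_{h}(t_{n-i})=\Omega$, and by~(\ref{se3:130}) with $m=1$ satisfies $\|\bar{v}\|_{W^{1,2}(\mathcal{O})}\le C(\Omega,\mathcal{O})\|v^{n-i}\|_{W^{1,2}(\Omega)}$. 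Since $X^{n,n-i}$ and $\widetilde{X}^{n,n-i}=H_{1}^{n}$ are bi-Lipschitz homeomorphisms onto $\Omega$, respectively $\widetilde{\Omega}_{h}(t_{n-i})$, with Jacobians bounded away from $0$ and $\infty$ (by Lemma~\ref{solX} and~(\ref{s2:2}) for the former, by Lemma~\ref{Halpha} with $\alpha=1$ for the latter), $\bar{v}\circ X^{n,n-i}$ and $\bar{v}\circ\widetilde{X}^{n,n-i}$ are well-defined elements of $L^{2}(\Omega)$ and $\bar{v}\circ X^{n,n-i}=v^{n-i}\circ X^{n,n-i}$ a.e.; this is how the left-hand side of~(\ref{se3:16}) is to be read.

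I would first prove the estimate for smooth integrands. For $w\in C^{\infty}(\mathbb{R}^{d})$, writing $g(x):=\widetilde{X}^{n,n-i}(x)-X^{n,n-i}(x)$ and noting $\partial_{\alpha}H_{\alpha}^{n}(x)=g(x)$, one has for $x\in\Omega$
\[
w(\widetilde{X}^{n,n-i}(x))-w(X^{n,n-i}(x))=\int_{0}^{1}\nabla w(H_{\alpha}^{n}(x))\cdot g(x)\,d\alpha ,
\]
so, by the Cauchy--Schwarz inequality in $\alpha$ and then integration in $x$,
\[
\|w\circ\widetilde{X}^{n,n-i}-w\circ X^{n,n-i}\|_{L^{2}(\Omega)}^{2}\le\|g\|_{L^{\infty}(\Omega)^{d}}^{2}\int_{0}^{1}\int_{\Omega}|\nabla w(H_{\alpha}^{n}(x))|^{2}\,dx\,d\alpha .
\]
By Lemma~\ref{Halpha}, for $\Delta t<\Delta t_{s}$ each $H_{\alpha}^{n}$ is one-to-one, maps $\Omega$ into $\mathcal{O}$, and has Jacobian $\ge1-C\Delta t\ge\tfrac12$; the change of variables $y=H_{\alpha}^{n}(x)$ therefore gives $\int_{\Omega}|\nabla w(H_{\alpha}^{n}(x))|^{2}\,dx\le2\|\nabla w\|_{L^{2}(\mathcal{O})^{d}}^{2}$ uniformly in $\alpha$, whence the double integral is $\le2\|\nabla w\|_{L^{2}(\mathcal{O})^{d}}^{2}$. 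Invoking Corollary~\ref{corol1} with $m=0$, i.e.\ $\|g\|_{L^{\infty}(\Omega)^{d}}\le Ch^{k+1}\Delta t\|u\|_{C([0,T];W^{k+1,\infty}(\Omega)^{d})}$, one arrives at
\[
\|w\circ\widetilde{X}^{n,n-i}-w\circ X^{n,n-i}\|_{L^{2}(\Omega)}\le C\,h^{k+1}\Delta t\,\|u\|_{C([0,T];W^{k+1,\infty}(\Omega)^{d})}\,\|\nabla w\|_{L^{2}(\mathcal{O})^{d}} .
\]

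To conclude I would pass to the limit. Picking $w_{j}\in C^{\infty}$ with $w_{j}\to\bar{v}$ in $W^{1,2}(\mathcal{O})$, a change of variables with $y=X^{n,n-i}(x)$ (image $\Omega\subset\mathcal{O}$) and with $y=\widetilde{X}^{n,n-i}(x)$ (image $\widetilde{\Omega}_{h}(t_{n-i})\subset\mathcal{O}$), both admissible by the Jacobian bounds above, shows $\|w_{j}\circ X^{n,n-i}-\bar{v}\circ X^{n,n-i}\|_{L^{2}(\Omega)}\le C\|w_{j}-\bar{v}\|_{L^{2}(\mathcal{O})}\to0$ and likewise for $\widetilde{X}^{n,n-i}$, while $\|\nabla w_{j}\|_{L^{2}(\mathcal{O})^{d}}\to\|\nabla\bar{v}\|_{L^{2}(\mathcal{O})^{d}}$. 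Letting $j\to\infty$ in the smooth estimate and using $\|\nabla\bar{v}\|_{L^{2}(\mathcal{O})^{d}}\le\|\bar{v}\|_{W^{1,2}(\mathcal{O})}\le C(\Omega,\mathcal{O})\|v^{n-i}\|_{W^{1,2}(\Omega)}$ gives~(\ref{se3:16}) with $\|v^{n-i}\|_{W^{1,2}(\Omega)}$ on the right-hand side; applying this to $v^{n-i}-\bar{c}$, $\bar{c}:=|\Omega|^{-1}\int_{\Omega}v^{n-i}\,d\Omega$, and using the Poincar\'e--Wirtinger inequality on the bounded Lipschitz domain $\Omega$, $\|v^{n-i}-\bar{c}\|_{W^{1,2}(\Omega)}\le C\|\nabla v^{n-i}\|_{L^{2}(\Omega)^{d}}$, replaces the full norm by the seminorm, and the constant inherits the stated dependence from those of Corollary~\ref{corol1}, Lemma~\ref{Halpha} and~(\ref{se3:130}).

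The hard part is not any single estimate: the geometry is already packaged in Lemma~\ref{Halpha} (injectivity of $H_{\alpha}^{n}$, its Jacobian near $1$, and above all the containment $H_{\alpha}^{n}(\Omega)\subset\mathcal{O}$, for which $\mathcal{O}$ was deliberately built as a tubular neighbourhood of $\Omega$), and the $\Delta t$ gain comes for free from Corollary~\ref{corol1}. The delicate point is rather the low regularity of $v^{n-i}$: the fundamental-theorem-of-calculus identity above is legitimate only for smooth $w$, so one must introduce $E_{t_{n-i}}$ with a target large enough to contain every segment $H_{\alpha}^{n}(\Omega)$ and then justify the density passage through the change-of-variables bounds; once that scaffolding is in place the remaining computation is routine.
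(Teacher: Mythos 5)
Your proof is correct and follows essentially the same route as the paper's: the difference is written as the integral of $\nabla E_{t_{n-i}}v^{n-i}$ along the homotopy $H_{\alpha}^{n}$ of Lemma~\ref{Halpha}, the resulting integral is controlled via the Jacobian bounds of that lemma and the extension bound (\ref{se3:130}), and the factor $h^{k+1}\Delta t$ is drawn from Corollary~\ref{corol1}. The two places where you are more careful than the paper --- the density argument legitimising the fundamental-theorem identity for an $H^{1}$ integrand, and the subtraction of the mean together with Poincar\'e--Wirtinger to convert the full norm $\Vert v^{n-i}\Vert_{H^{1}(\Omega)}$ (which is what (\ref{se3:130}) naturally delivers) into the seminorm $\Vert\nabla v^{n-i}\Vert_{L^{2}(\Omega)^{d}}$ appearing in (\ref{se3:16}) --- are genuine refinements of steps the paper's proof leaves implicit.
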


\begin{proof}
By virtue of (\ref{se3:131}) we have that%
\begin{equation*}
E_{t_{n-i}}v^{n-i}(X^{n,n-i}(x))-E_{t_{n-i}}v^{n-i}(\widetilde{X}%
^{n,n-i}(x))=-\int_{0}^{1}\frac{\partial }{\partial \alpha }%
E_{t_{n-i}}v^{n-i}(H_{\alpha }^{n}(x))d\alpha .
\end{equation*}%
Since $\frac{\partial }{\partial \alpha }E_{t_{n-i}}v^{n-i}(H_{\alpha
}^{n}(x))=-DE_{t_{n-i}}v^{n-i}(H_{\alpha }^{n}(x))\cdot (X^{n,n-i}(x)-%
\widetilde{X}^{n,n-i}(x))$, then it follows that%
\begin{equation*}
\left\vert E_{t_{n-i}}v^{n-i}(X^{n,n-i}(x))-E_{t_{n-i}}v^{n-i}(\widetilde{X}%
^{n,n-i}(x))\right\vert \leq \left\Vert X^{n,n-i}(x)-\widetilde{X}%
^{n,n-i}(x)\right\Vert _{L^{\infty }(\Omega )^{d}}\int_{0}^{1}\left\vert
\nabla E_{t_{n-i}}v^{n-i}(H_{\alpha }^{n}(x))\right\vert d\alpha .
\end{equation*}%
Hence, by virtue of (\ref{se3:130}) and the fact that $\Omega =\Omega_{h}
(t_{n-i})\subset \subset \mathcal{O}$,%
\begin{equation*}
\begin{array}{c}
\displaystyle\int_{\Omega }\left\vert v^{n-i}(X^{n,n-i}(x))-v^{n-i}(\widetilde{X}^{n,n-i}(x))\right\vert ^{2}d\Omega \leq \displaystyle\int_{\mathcal{O} }\left\vert
E_{t_{n-i}}v^{n-i}(X^{n,n-i}(x))-E_{t_{n-i}}v^{n-i}(\widetilde{X}%
^{n,n-i}(x))\right\vert ^{2}dx \\
\\
\leq \left\Vert X^{n,n-i}(x)-\widetilde{X}^{n,n-i}(x)\right\Vert _{L^{\infty
}(\Omega )^{d}}^{2}\displaystyle\int_{0}^{1}\left( \int_{\mathcal{O} }\left\vert \nabla E_{t_{n-i}}v^{n-i}(H_{\alpha }^{n}(x))\right\vert
^{2}dx\right) d\alpha \\
\\
\leq C\left\Vert X^{n,n-i}(x)-\widetilde{X}^{n,n-i}(x)\right\Vert
_{L^{\infty }(\Omega )^{d}}^{2}\left\Vert \nabla v^{n-i}\right\Vert
_{L^{2}(\Omega )^{d}}^{2}%
\end{array}%
\end{equation*}%
So, from (\ref{se3:13}) it follows the result (\ref{se3:16}).
\end{proof}

It is worth remarking that by virtue of (\ref{s2:2}) $(J^{n,n-i})^{-1}\leq
C_{J}:=\exp (q\Delta tC_{\mathrm{div}})$, then it readily follows that%
\begin{equation}
\left\Vert v^{n-i}(X^{n,n-i}(x))\right\Vert _{L^{2}(\Omega )}\leq
C_{J}\left\Vert v^{n-1}\right\Vert _{L^{2}(\Omega )}.  \label{se3:17}
\end{equation}


\subsection{Calculation of the integrals $\left(
c_{h}^{n-i},v_{h}^{n-i}\right) _{\Omega _{h}(t_{n-i})}$ of (\protect\ref%
{s2:11}) for $1\leq i\leq q$}

We explain the procedure to approximate%
\begin{equation*}
\left( c_{h}^{n-i},v_{h}^{n-i}\right) _{\Omega
_{h}(t_{n-i})}:=\sum_{l=1}^{NE}%
\int_{T_{l}(t_{n-i})}c_{h}^{n-i}(y^{n-i})v_{h}^{n-i}(y^{n-i})d\Omega
_{h}(t_{n-i}),\ 1\leq i\leq q,
\end{equation*}%
As is usual in finite element calculations, we shall employ the element of
reference $\widehat{T}$ to calculate the integrals. Thus, performing the
change of variable, see (\ref{se3:121}), $\left. y^{n-i}\right\vert
_{T_{l}(t_{n-i})}=F_{l}(t_{n-i})(\widehat{x})$,\ $\left. d\Omega
_{h}(t_{n-i})\right\vert =J_{F_{l}(t_{n-i})}(\widehat{x})d\widehat{x}$,
where $J_{F_{l}(t_{n-i})}(\widehat{x})$ is the Jacobian determinant of the
transformation $F_{l}(t_{n-i})(\widehat{x})$, we can set%
\begin{equation*}
\begin{array}{r}
\left( c_{h}^{n-i},v_{h}^{n-i}\right) _{T_{l}(t_{n-i})}:=\displaystyle%
\int_{T_{l}(t_{n-i})}c_{h}^{n-i}(y^{n-i})v_{h}^{n-i}(y^{n-i})d\Omega
_{h}(t_{n-i}) \\
\\
=\displaystyle\int_{\widehat{T}}c_{h}^{n-i}(F_{l}(t_{n-i})(\widehat{x}%
))v_{h}^{n-i}(F_{l}(t_{n-i})(\widehat{x}))J_{F_{l}(t_{n-i})}(\widehat{x})d%
\widehat{x},%
\end{array}%
\end{equation*}%
where in view of (\ref{se3:1}) $J_{F_{l}(t_{n-i})}(\widehat{x})=\displaystyle%
\left\vert \frac{\partial X(F_{l}(\widehat{x}),t_{n};t_{n-i})}{\partial x}%
\right\vert \displaystyle\left\vert \frac{\partial F_{l}(\widehat{x})}{%
\partial \widehat{x}}\right\vert $, with $\displaystyle\left\vert \frac{%
\partial F_{l}(\widehat{x})}{\partial \widehat{x}}\right\vert $ being the
Jacobian determinant of the transformation $F_{l}:\widehat{T}\rightarrow
T_{l}$, and $\displaystyle\left\vert \frac{\partial X(F_{l}(\widehat{x}%
),t_{n};t_{n-i})}{\partial x}\right\vert =J(F_{l}(\widehat{x}%
),t_{n};t_{n-i}) $, see (\ref{s2:0}). So, we can set that
\begin{equation*}
J_{F_{l}(t_{n-i})}(\widehat{x})=\left\vert \frac{\partial F_{l}(\widehat{x})%
}{\partial \widehat{x}}\right\vert \exp \left( -\int_{t_{n-i}}^{t_{n}}\nabla
_{X}\cdot u(F_{l}(s)(\widehat{x}),s)ds\right) .
\end{equation*}%
In general, the integrals defined on $\widehat{T}$ are calculated via
quadrature rules of high order due to the difficulty of calculating them
analytically; so, we set
\begin{equation}
\begin{array}{l}
\displaystyle\int_{\widehat{T}}c_{h}^{n-i}(F_{l}(t_{n-i})(\widehat{x}%
))v_{h}^{n-i}(F_{l}(t_{n-i})(\widehat{x}))J_{F_{l}(t_{n-i})}(\widehat{x})d%
\widehat{x} \\
\\
=\displaystyle\sum_{g=1}^{nqp}\varpi _{g}c_{h}^{n-i}(F_{l}(t_{n-i})(\widehat{%
x}_{g}))v_{h}^{n-i}(F_{l}(t_{n-i})(\widehat{x}_{g}))J_{F_{l}(t_{n-i})}(%
\widehat{x}_{g}),%
\end{array}
\label{I1}
\end{equation}%
where $nqp$ denotes the number of quadrature points $\widehat{x}_{g}\in
\widehat{T}$ and $\varpi _{g}$ are the weights of the rule. There are two
main problems in applying the quadrature formula. The first one has to do
with the calculation of $F_{l}(t_{n-i})(\widehat{x}_{g})$, for $%
F_{l}(t_{n-i})(\widehat{x}_{g})=X^{n,n-1}(F_{l}(\widehat{x}%
_{g}))=X^{n,n-1}(x_{g})$, with $X^{n,n-1}(x_{g})$ being the solution of the
system (\ref{int2}) with the initial condition $X(x,t_{n};t_{n})=x_{g}=F_{l}(%
\widehat{x}_{g})\in \overline{\Omega }$; so, if for accuracy reasons $nqp$
has to be large, then at each time step and for each element $l$ one has to
solve $nqp$ times the system (\ref{int2}) with the initial condition $x_{g}$%
. The second problem is about to finding the element $T_{k}$ of the fixed
mesh $\mathcal{T}_{h}$ that contains each point $F_{l}(t_{n-i})(\widehat{x}%
_{g})$ because $c_{h}^{n-i}(F_{l}(t_{n-i})(\widehat{x}))$ and $%
J_{F_{l}(t_{n-i})}(\widehat{x})$ must be evaluated at $F_{l}(t_{n-i})(%
\widehat{x}_{g})$. In order to alleviate the computational burden of this
procedure we follow the ideas of \cite{bc}-\cite{BS2} and approximate $%
F_{l}(t_{n-i})(\widehat{x})$ by $\widetilde{F}_{l}(t_{n-i})(\widehat{x}%
)=I_{k}F_{l}(t_{n-i})(\widehat{x})$; so, the integrals are now
\begin{equation}
\begin{array}{l}
\displaystyle\int_{\widehat{T}}c_{h}^{n-i}(\widetilde{F}_{l}(t_{n-i})(%
\widehat{x}))v_{h}^{n-i}(\widetilde{F}_{l}(t_{n-i})(\widehat{x}))\widetilde{J%
}_{F_{l}(t_{n-i})}(\widehat{x})d\widehat{T} \\
\\
=\displaystyle\sum_{g=1}^{nqp}\varpi _{g}c_{h}^{n-i}(\widetilde{F}%
_{l}(t_{n-i})(\widehat{x}_{g}))v_{h}^{n-i}(\widetilde{F}_{l}(t_{n-i})(%
\widehat{x}_{g}))\widetilde{J}_{F_{l}(t_{n-i})}(\widehat{x}_{g}),%
\end{array}
\label{I2}
\end{equation}%
where%
\begin{equation*}
\widetilde{J}_{F_{l}(t_{n-i})}(\widehat{x})=\left\vert \frac{\partial F_{l}(%
\widehat{x})}{\partial \widehat{x}}\right\vert \exp \left(
-\int_{t_{n-i}}^{t_{n}}\nabla _{X}\cdot u(\widetilde{F}_{l}(s)(\widehat{x}%
),s)ds\right) .
\end{equation*}%
and $\widetilde{F}_{l}(s)(\widehat{x}_{g})\in \widetilde{T}_{l}(s)$;
recalling (\ref{se3:5}), $\widetilde{F}_{l}(s)(\widehat{x}%
_{g})=\sum_{j=1}^{NN}X(a_{j}^{(l)},t_{n};s)\widehat{\chi }_{j}(\widehat{x}%
_{g})$. From a computational point of view (in terms of CPU time and memory
storage requirements) is much more efficient to calculate the points $%
\widetilde{F}_{l}(s)(\widehat{x}_{g})$ than the points $F_{l}(s)(\widehat{x}%
_{g})$ because the integration of the system (\ref{int2}) with the initial
conditions $X(x,t_{n};t_{n})=F_{l}(\widehat{x}_{g})$ is replaced by the
finite element interpolation in the element $\widetilde{T}_{l}(s)$.
Likewise, the search-locate procedure to find the element $T_{k}$ of the
fixed mesh $\mathcal{T}_{h}$ that contains each point $\widetilde{y}_{\lg
}^{n-1}:=\widetilde{F}_{l}(t_{n-i})(\widehat{x}_{g})$ is also more
efficient. Looking at the integral (\ref{I1}) and recalling that $\left.
X^{n,n-i}(x)\right\vert _{T_{l}}=F_{l}(t_{n-i})(\widehat{x})$ and $\left.
d\Omega \right\vert _{T_{l}}=\displaystyle\left\vert \frac{\partial F_{l}(%
\widehat{x})}{\partial \widehat{x}}\right\vert d\widehat{T}$, we have that
\begin{equation*}
\begin{array}{c}
\displaystyle\int_{\widehat{T}}c_{h}^{n-i}(F_{l}(t_{n-i})(\widehat{x}%
))v_{h}^{n-i}(F_{l}(t_{n-i})(\widehat{x}))J_{F_{l}(t_{n-i})}(\widehat{x})d%
\widehat{T} \\
\\
=\displaystyle%
\int_{T_{l}}c_{h}^{n-i}(X^{n,n-i}(x))v_{h}^{n-i}(X^{n,n-i}(x))J^{n,n-i}(x)d%
\Omega ,%
\end{array}%
\end{equation*}%
where, see (\ref{s2:1}),%
\begin{equation*}
J^{n,n-i}(x)=J(x,t_{n};t_{n-i})=\exp \left( -\int_{t_{n-i}}^{t_{n}}\nabla
_{X}\cdot u(X^{n,n-i}(x),s)ds\right) ;
\end{equation*}%
hence, it follows that%
\begin{equation*}
\int_{\Omega _{h}(t_{n-i})}c_{h}^{n-i}(y^{n-i})v_{h}^{n-i}(y^{n-i})d\Omega
_{h}(t_{n-i})=\int_{\Omega
}c_{h}^{n-i}(X^{n,n-i}(x))v_{h}^{n-i}(X^{n,n-i}(x))J^{n,n-i}(x)d\Omega .
\end{equation*}
As for the integral (\ref{I2}), we recall that $\left. \widetilde{X}%
^{n,n-i}(x)\right\vert _{T_{l}}=\widetilde{F}_{l}(t_{n-i})(\widehat{x})$ and
consequently
\begin{equation*}
\begin{array}{c}
\displaystyle\int_{\widehat{T}}c_{h}^{n-i}(\widetilde{F}_{l}(t_{n-i})(%
\widehat{x}))v_{h}^{n-i}(\widetilde{F}_{l}(t_{n-i})(\widehat{x}))\widetilde{J%
}_{F_{l}(t_{n-i})}(\widehat{x})d\widehat{T} \\
\\
=\displaystyle\int_{T_{l}}c_{h}^{n-i}(\widetilde{X}^{n,n-i}(x))v_{h}^{n-i}(
\widetilde{X}^{n,n-i}(x))\widetilde{J}^{n,n-i}(x)d\Omega ,%
\end{array}%
\end{equation*}
where
\begin{equation*}
\widetilde{J}^{n,n-i}(x)=\widetilde{J}(x,t_{n};t_{n-i})=\exp \left(
-\int_{t_{n-i}}^{t_{n}}\nabla _{X}\cdot u(\widetilde{X}^{n,n-i}(x),s)ds%
\right) .
\end{equation*}
Therefore,
\begin{equation*}
\int_{\Omega }c_{h}^{n-i}(\widetilde{X}^{n,n-i}(x))v_{h}^{n-i}(\widetilde{X}%
^{n,n-i}(x))\widetilde{J}^{n,n-i}(x)d\Omega
=\sum_{l=1}^{NE}\int_{T_{l}}c_{h}^{n-i}(\widetilde{X}^{n,n-i}(x))v_{h}^{n-i}(%
\widetilde{X}^{n,n-i}(x))\widetilde{J}^{n,n-i}(x)d\Omega
\end{equation*}
is an approximation to\ $\displaystyle\int_{\Omega
}c_{h}^{n-i}(X^{n,n-i}(x))v_{h}^{n-i}(X^{n,n-i}(x))J^{n,n-i}(x)d\Omega $.
Now, taking into account that $v_{h}(X^{n,n-i}(x))\in V_{h}(t_{n-i})$ and $%
v_{h}(\widetilde{X}^{n.n-i}(x))\in \widetilde{V}_{h}(t_{n-i})$ are of the
form
\begin{equation*}
v_{h}(X^{n,n-i}(x))=\sum_{j=1}^{M}V_{j}\chi _{j}(X^{n,n-i}(x)),\ \ v_{h}(%
\widetilde{X}^{n.n-i}(x))=\sum_{j=1}^{M}V_{j}\chi _{j}(\widetilde{X}%
^{n,n-i}(x)),
\end{equation*}
and given that both $\chi _{j}(X^{n,n-i}(x))$ and $\chi _{j}(\widetilde{X}%
^{n,n-i}(x))=\chi _{j}(x)$, then $v_{h}(X^{n,n-i}(x))$ and $v_{h}(\widetilde{%
X}^{n.n-i}(x))$ are equal to $v_{h}(x)\in V_{h}$. Then, returning to (\ref%
{s2:11}), we calculate $c_{h}^{n}\in V_{h}$ as solution of
\begin{equation}
\left\{
\begin{array}{l}
\alpha _{0}\left( c_{h}^{n},v_{h}\right) _{\Omega }+\Delta
ta(c_{h}^{n},v_{h})=\Delta t(f^{n},v_{h})_{\Omega } \\
\\
-\displaystyle\sum_{i=1}^{q}\alpha _{i}\left( \widetilde{J}%
^{n,n-i}(x)c_{h}^{n-i}(\widetilde{X}^{n,n-i}(x)),v_{h}(x))\right) _{\Omega
}\ \forall v_{h}\in V_{h},%
\end{array}%
\right.  \label{se3:14}
\end{equation}
where it is remarkable to point out that we have replaced $\displaystyle %
\int_{\Omega }c_{h}^{n-i}(X^{n,n-i}(x))v_{h}^{n-i}(x)J^{n,n-i}(x)d\Omega $
by $\displaystyle\int_{\Omega }c_{h}^{n-i}(\widetilde{X}^{n,n-i}(x))v_{h}(x))%
\widetilde{J}^{n,n-i}(x)d\Omega$. Next, it is convenient to ascertain the
error committed in doing this replacement. To this end we denote this error
by $IE^{n,n-i}$, so we have that
\begin{equation*}
IE^{n,n-i}:=\int_{\Omega
}J^{n,n-i}(x)c_{h}^{n-i}(X^{n,n-i}(x))v_{h}(x)d\Omega -\int_{\Omega}%
\widetilde{J}^{n,n-i}(x)c_{h}^{n-i}(\widetilde{X}^{n,n-i}(x))v_{h}(x)d\Omega,
\end{equation*}%
which, after simple arithmetical operations and hereafter using the notation
$\widehat{c}_{h}^{n-i}(x)$ and $\widetilde{c}_{h}^{n-i}(x)$ to denote $%
c_{h}^{n-i}(X^{n,n-i}(x))$ and $c_{h}^{n-i}(\widetilde{X}^{n,n-i}(x))$
respectively, can also be expressed by%
\begin{equation}
IE^{n,n-i}=\int_{\Omega }J^{n,n-i}(x)\left( \widehat{c}_{h}^{n-i}(x)-%
\widetilde{c}_{h}^{n-i}(x)\right) v_{h}(x)d\Omega +\int_{\Omega
}(J^{n,n-i}(x)-\widetilde{J}^{n,n-i}(x))\widetilde{c}_{h}^{n-i}(x)v_{h}(x)d%
\Omega .  \label{I3}
\end{equation}%
Notice that when $i=0$, $IE^{n,n}=0$ because $J^{n,n}(x)=\widetilde{J}%
^{n,n}(x)=1$ and $\widehat{c}_{h}^{n}(x)=\widetilde{c}%
_{h}^{n}(x)=c_{h}^{n}(x)\in V_{h}$. We have the following result.

\begin{lemma}
\label{erintg} For all $n\geq q$, $1\leq i\leq q$, $\Delta t\in
(0,\Delta t_{s})$ and $h\in
(0,h_{s})$, there is a positive constant $C(u)$ such that the error $%
IE^{n,n-i}$ is bounded by%
\begin{equation}
\left\vert IE^{n,n-i}\right\vert \leq C(u)\Delta th^{k+1}\left( \left\Vert
\nabla c_{h}^{n-i}\right\Vert _{L^{2}(\Omega )}+\left\Vert
c_{h}^{n-i}\right\Vert _{L^{2}(\Omega )}\right) \left\Vert v_{h}\right\Vert
_{L^{2}(\Omega )},  \label{I4}
\end{equation}%
where the constant $C(u)=C(\Omega ,\mathcal{O},C_{J},q,k,\sigma ^{\ast
},\left\Vert u\right\Vert _{L^{\infty }(W^{k+1,\infty }(\Omega )^{d})})$.
\end{lemma}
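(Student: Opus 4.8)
The plan is to bound separately the two integrals on the right-hand side of (\ref{I3}), call them $IE_{1}^{n,n-i}$ (the term carrying $\widehat c_{h}^{n-i}-\widetilde c_{h}^{n-i}$) and $IE_{2}^{n,n-i}$ (the term carrying $J^{n,n-i}-\widetilde J^{n,n-i}$), and then to add the two estimates.

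For $IE_{1}^{n,n-i}$ I would apply the Cauchy--Schwarz inequality, bound $\|J^{n,n-i}\|_{L^{\infty}(\Omega )}\le C_{J}$ by (\ref{s2:2}), and note that $\|\widehat c_{h}^{n-i}-\widetilde c_{h}^{n-i}\|_{L^{2}(\Omega )}=\|c_{h}^{n-i}(X^{n,n-i}(\cdot ))-c_{h}^{n-i}(\widetilde X^{n,n-i}(\cdot ))\|_{L^{2}(\Omega )}$ is exactly the quantity controlled by Lemma \ref{X-X}, applied with $v=c_{h}^{n-i}\in V_{h}\subset H^{1}(\Omega )$. This already gives $|IE_{1}^{n,n-i}|\le C(u)\Delta t\,h^{k+1}\|\nabla c_{h}^{n-i}\|_{L^{2}(\Omega )}\|v_{h}\|_{L^{2}(\Omega )}$, the $\|\nabla c_{h}^{n-i}\|_{L^{2}(\Omega )}$ contribution to (\ref{I4}).

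For $IE_{2}^{n,n-i}$, Cauchy--Schwarz reduces the task to bounding $\|J^{n,n-i}-\widetilde J^{n,n-i}\|_{L^{\infty}(\Omega )}$ and $\|\widetilde c_{h}^{n-i}\|_{L^{2}(\Omega )}$. For the first factor I would write $J^{n,n-i}$ and $\widetilde J^{n,n-i}$ as $\exp\!\big(-\int_{t_{n-i}}^{t_{n}}\nabla_{X}\!\cdot u(X^{n,n-i}(x),s)\,ds\big)$ and $\exp\!\big(-\int_{t_{n-i}}^{t_{n}}\nabla_{X}\!\cdot u(\widetilde X^{n,n-i}(x),s)\,ds\big)$, observe that by (\ref{s2:2}) both exponents lie in the interval $[-qC_{\mathrm{div}}\Delta t,\,qC_{\mathrm{div}}\Delta t]$ so that $t\mapsto e^{t}$ is Lipschitz there with constant $\le C_{J}$, and then estimate $\big|\int_{t_{n-i}}^{t_{n}}\big(\nabla_{X}\!\cdot u(X^{n,n-i}(x),s)-\nabla_{X}\!\cdot u(\widetilde X^{n,n-i}(x),s)\big)ds\big|\le q\Delta t\,\|u\|_{C([0,T];W^{2,\infty}(\Omega )^{d})}\,\|X^{n,n-i}-\widetilde X^{n,n-i}\|_{L^{\infty}(\Omega )^{d}}$, using the spatial Lipschitz continuity of $\nabla\!\cdot u$ (available since $k+1\ge 2$, extending $u$ to $\mathcal{O}$ where needed). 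Corollary \ref{corol1} with $m=0$ then gives $\|J^{n,n-i}-\widetilde J^{n,n-i}\|_{L^{\infty}(\Omega )}\le C(u)h^{k+1}(\Delta t)^{2}$. For $\|\widetilde c_{h}^{n-i}\|_{L^{2}(\Omega )}$ I would use $\widetilde c_{h}^{n-i}=\widehat c_{h}^{n-i}-(\widehat c_{h}^{n-i}-\widetilde c_{h}^{n-i})$ together with (\ref{se3:17}) and Lemma \ref{X-X}, obtaining $\|\widetilde c_{h}^{n-i}\|_{L^{2}(\Omega )}\le C_{J}\|c_{h}^{n-i}\|_{L^{2}(\Omega )}+C(u)h^{k+1}\Delta t\,\|\nabla c_{h}^{n-i}\|_{L^{2}(\Omega )}$ (equivalently, change variables $y=H_{\alpha }^{n}(x)$ at $\alpha =1$, whose Jacobian is bounded above and below by Lemma \ref{Halpha}, combined with the extension bound (\ref{se3:130}), since $\widetilde X^{n,n-i}(\Omega )=\widetilde\Omega _{h}(t_{n-i})\subset\subset \mathcal{O}$ need not equal $\Omega $).

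Multiplying these bounds and using $\Delta t\le\Delta t_{s}$, $h\le h_{s}$ to absorb the higher-order factors $(\Delta t)^{2}$ and $h^{k+1}\Delta t$ into the constant gives $|IE_{2}^{n,n-i}|\le C(u)\Delta t\,h^{k+1}\big(\|c_{h}^{n-i}\|_{L^{2}(\Omega )}+\|\nabla c_{h}^{n-i}\|_{L^{2}(\Omega )}\big)\|v_{h}\|_{L^{2}(\Omega )}$; adding the estimates for $IE_{1}^{n,n-i}$ and $IE_{2}^{n,n-i}$ yields (\ref{I4}) with a constant of the stated form. The only step that is not completely routine is the estimate of the Jacobian difference $J^{n,n-i}-\widetilde J^{n,n-i}$: one must confine both exponents to a fixed bounded window so that $\exp$ is globally Lipschitz there, exploit that $u\in C([0,T];W^{k+1,\infty}(\Omega )^{d})$ with $k\ge 1$ supplies the Lipschitz continuity of $\nabla\!\cdot u$, and keep in mind that $\widetilde X^{n,n-i}$ maps into the enlarged domain $\mathcal{O}$, so that the extension operator $E_{t_{n-i}}$ is in force whenever $c_{h}^{n-i}$ or $u$ is evaluated at these perturbed points.
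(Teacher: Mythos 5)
Your proposal is correct and follows the same skeleton as the paper's proof: the same splitting of $IE^{n,n-i}$ into the two integrals of (\ref{I3}), H\"older/Cauchy--Schwarz on each, Lemma \ref{X-X} for $\|\widehat c_{h}^{n-i}-\widetilde c_{h}^{n-i}\|_{L^{2}(\Omega )}$, the bound $\|J^{n,n-i}\|_{L^{\infty }(\Omega )}\leq C_{J}$, and the triangle inequality together with (\ref{se3:17}) for $\|\widetilde c_{h}^{n-i}\|_{L^{2}(\Omega )}$ (the paper additionally invokes the inverse inequality here to express everything in terms of $\|c_{h}^{n-i}\|_{L^{2}(\Omega )}$, but keeping the gradient term as you do is equally admissible since both norms appear in (\ref{I4})). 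The one place where you genuinely diverge is the estimate of $\|J^{n,n-i}-\widetilde J^{n,n-i}\|_{L^{\infty }(\Omega )}$. The paper argues abstractly: it defines $B(X^{n,n-i}(x)):=J^{n,n-i}(x)$, notes $B\in W^{k,\infty }(\Omega _{h}(t_{n-i}))$, and reruns the extension-operator/line-integral argument of Lemma \ref{X-X} in the $L^{\infty }$ setting to get $\|J^{n,n-i}-\widetilde J^{n,n-i}\|_{L^{\infty }(\Omega )}\leq C\|X^{n,n-i}-\widetilde X^{n,n-i}\|_{L^{\infty }(\Omega )^{d}}\|B\|_{W^{1,\infty }}\leq C(u)\Delta t\,h^{k+1}$. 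You instead work directly from the explicit formula (\ref{s2:1}), using that the two exponents live in the fixed window $[-qC_{\mathrm{div}}\Delta t,\,qC_{\mathrm{div}}\Delta t]$ where $\exp$ is Lipschitz with constant $\leq C_{J}$, and that $\nabla\!\cdot u$ is spatially Lipschitz because $u\in C([0,T];W^{k+1,\infty }(\Omega )^{d})$ with $k\geq 1$. This is more elementary, avoids having to justify differentiability of the composite function $B$, and in fact yields the sharper bound $C(u)h^{k+1}\Delta t^{2}$ (the extra $\Delta t$ coming from the length of the time integral), which is of course absorbed into (\ref{I4}). Both routes rely on Corollary \ref{corol1} with $m=0$ as the ultimate source of the $h^{k+1}\Delta t$ factor, so the final constant has the stated dependence in either case.
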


\begin{proof}
By the triangle inequality it follows that%
\begin{equation*}
\left\vert IE^{n,n-i}\right\vert \leq \int_{\Omega }\left\vert
J^{n,n-i}(x)\right\vert \left\vert \left( \widehat{c}_{h}^{n-i}(x)-%
\widetilde{c}_{h}^{n-i}(x)\right) \right\vert \left\vert v_{h}(x)\right\vert
d\Omega +\int_{\Omega }\left\vert J^{n,n-i}(x)-\widetilde{J}%
^{n,n-i}(x)\right\vert \left\vert \widetilde{c}_{h}^{n-i}(x)\right\vert
\left\vert v_{h}(x)\right\vert d\Omega .
\end{equation*}%
Next, we apply H\"{o}lder inequality to estimate the terms on the right hand
side and obtain that%
\begin{equation*}
\begin{array}{l}
\left\vert IE^{n,n-i}\right\vert \leq \left( \left\Vert J^{n,n-i}\right\Vert
_{L^{\infty }(\Omega )}\left\Vert \widehat{c}_{h}^{n-i}-\widetilde{c}%
_{h}^{n-i}\right\Vert _{L^{2}(D)}\right. \\
\\
\ \ \ \ \ \ \ \ \ \ \ \ \ \ \ +\left. \left\Vert J^{n,n-i}-\widetilde{J}%
^{n,n-i}\right\Vert _{L^{\infty }(\Omega )}\left\Vert \widetilde{c}%
_{h}^{n-i}\right\Vert _{L^{2}(D)}\right) \left\Vert v_{h}\right\Vert
_{L^{2}(D)}.%
\end{array}%
\end{equation*}%
To bound the terms on the right side of this inequality we notice that by
virtue of (\ref{se3:16})%
\begin{equation*}
\left\Vert \widehat{c}_{h}^{n-i}-\widetilde{c}_{h}^{n-i}\right\Vert
_{L^{2}(\Omega )}\leq C(u)\Delta th^{k+1}\left\Vert \nabla
c_{h}^{n-i}\right\Vert _{L^{2}(\Omega )}
\end{equation*}%
and from (\ref{se3:17})%
\begin{equation*}
\left\Vert \widehat{c}_{h}^{n-i}\right\Vert _{L^{2}(\Omega )}\leq
C_{J}\left\Vert c_{h}^{n-1}\right\Vert _{L^{2}(\Omega )}
\end{equation*}%
and by the triangle and inverse inequalities
\begin{equation*}
\begin{array}{r}
\left\Vert \widetilde{c}_{h}^{n-i}\right\Vert _{L^{2}(\Omega )}\leq
\left\Vert \widehat{c}_{h}^{n-i}\right\Vert _{L^{2}(\Omega )}+\left\Vert
\widehat{c}_{h}^{n-i}-\widetilde{c}_{h}^{n-i}\right\Vert _{L^{2}(\Omega )}
\\
\\
\leq C_{J}\left( 1+C(u)\Delta th^{k}\right) \left\Vert
c_{h}^{n-1}\right\Vert _{L^{2}(\Omega )}.%
\end{array}%
\end{equation*}
To estimate $\left\Vert J^{n,n-i}\right\Vert _{L^{\infty }(\Omega )}$ we
make use of (\ref{s2:1}) and obtain that
\begin{equation*}
\left\Vert J^{n,n-i}\right\Vert _{L^{\infty }(\Omega )}\leq C_{J},\ \
C_{J}:=\exp (q\Delta tC_{\mathrm{div}}).
\end{equation*}
To estimate $\left\Vert J^{n,n-i}-\widetilde{J}^{n,n-i}\right\Vert
_{L^{\infty }(\Omega )}$ we introduce the function $B:\Omega
_{h}(t_{n-i})\rightarrow \mathbb{R}$ such as $B(X^{n,n-i}(x)):=J^{n,n-i}(x)$%
, so $B\in W^{k,\infty }(\Omega _{h}(t_{n-i}))$, and consider the continuous
linear extension operator of Lemma \ref{X-X}, then arguing as in the proof
of this lemma we readily arrive at%
\begin{equation*}
\left\Vert J^{n,n-i}-\widetilde{J}^{n,n-i}\right\Vert _{L^{\infty }(\Omega
)}\leq C\left\Vert X^{n,n-i}(x)-\widetilde{X}^{n,n-i}(x)\right\Vert
_{L^{\infty }(\Omega )^{d}}\left\Vert B\right\Vert _{W^{1,\infty }(\Omega
)^{d}},
\end{equation*}%
so, by virtue of (\ref{se3:13}), there is a constant $C(u)=$ $C(\Omega ,%
\mathcal{O(}t_{n-i}),C_{J},q,k,\sigma ^{\ast },\left\Vert u\right\Vert
_{L^{\infty }(W^{k+1,\infty }(\Omega )^{d})})$ such that%
\begin{equation*}
\left\Vert J^{n,n-i}-\widetilde{J}^{n,n-i}\right\Vert _{L^{\infty }(\Omega
)}\leq C(u)\Delta th^{k+1}.
\end{equation*}%
Hence, collecting these estimates we readily obtain (\ref{I4}).
\end{proof}

\begin{remark}
If we were able to calculate the integrals $%
\int_{T_{l}(t_{n-i})}c_{h}^{n-i}(X^{n,n-i}(x))v_{h}^{n-i}(X^{n,n-i}(x))d%
\Omega_{h}(t_{n-i})$ exactly, the procedure would be conservative because
letting $v_{h}^{n-i}=1$ and denoting $J(x,t_{n};t_{n-i})$ by $J^{n,n-i}(x)$
we have that%
\begin{equation*}
\sum_{l=1}^{NE}\int_{T_{l}(t_{n-i})}c_{h}^{n-i}(X^{n,n-i}(x))d\Omega
(t_{n-i})=\int_{\Omega
_{h}(t_{n-i})}c_{h}^{n-i}(X^{n,n-i}(x))J^{n,n-i}(x)d\Omega
_{h}(t_{n-i})=\int_{\Omega }c_{h}^{n-i}(z)d\Omega .
\end{equation*}
with $z=X^{n,n-i}(x)$. Hence, it follows that we can recast (\ref{s2:11}) as
\begin{equation*}
\sum_{n=q}^{N}\sum_{i=0}^{q}\alpha _{i}\int_{\Omega }c_{h}^{n-i}(x)d\Omega
+a_{0}\Delta t\sum_{n=q}^{N}\int_{\Omega }c_{h}^{n}(x)d\Omega =\Delta
t\sum_{n=q}^{N}\int_{\Omega }f^{n}(x)d\Omega .
\end{equation*}
Since $\sum_{i=0}^{q}\alpha _{i}=0$, it is easy to see that the latter
equality yields
\begin{equation*}
\sum_{i=0}^{q-1}\delta _{i}\int_{\Omega }c_{h}^{N-i}(x)d\Omega
+\sum_{i=1}^{q}\beta _{q-i}\int_{\Omega }c_{h}^{q-i}(x)d\Omega +a_{0}\Delta
t\sum_{n=q}^{N}\int_{\Omega }c_{h}^{n}(x)d\Omega =\Delta
t\sum_{n=q}^{N}\int_{\Omega }f^{n}(x)d\Omega ,
\end{equation*}
where $\delta _{i}=\sum_{j=0}^{i}\alpha _{i}$ and $\beta
_{q-i}=\sum_{j=i}^{q}\alpha _{j}$. Notice that if $a_{0}$ and $f^{n}$ were
identically zero then for all $N>q$, $\sum_{i=0}^{q-1}\delta
_{i}\int_{\Omega }c_{h}^{N-i}(x)d\Omega $ is constant. Our methods are not
fully conservative because as we have seen above the integrals $%
\int_{T_{l}(t_{n-i})}c_{h}^{n-i}d\Omega _{h}(t_{n-i})$ are not calculated
exactly, so that $\sum_{l=1}^{NE}\int_{\widetilde{T}
_{l}(t_{n-i})}c_{h}^{n-i}(\widetilde{y}^{n-i})d\Omega _{h}(t_{n-i})\neq
\int_{\Omega }c_{h}^{n-i}(x)d\Omega $. This is the reason we say that our
methods are nearly-conservative. However, numerical experiments, see \cite%
{colera}, show that the global mass conservation error of our method is much
smaller than that of the conventional LG method.
\end{remark}

\section{Stability and error analysis}

We present the stability and error analysis in the $L^{2}$-norm of
the NCLG-BDF-q methods for $1\leq q\leq 5$, although it is
possible to extend the analysis to $q=6$ using the approach
presented in \cite{Ak1}, however this would enlarge the article
excessively. So, for the sake of economy, we restrict our analysis
up to order $q=5$, and also assume that the feet of the
characteristics $X(x,t_{n};t_{n-i})$ are calculated exactly. In
\cite{BS1} is presented the procedure to include the error committed
when the feet $X(x,t_{n};t_{n-i})$ are approximated by one-step
numerical methods for initial value problems.

\subsection{Stability analysis}

Following \cite{Ak}, we study the stability of the solution $c_{h}^{n}$ in
the $L^{2}-$norm making use of a result due to Nevanlinna and Odeh \cite{NOD}%
, which, based on Dahlquist G-stability theory, makes relatively easy the
stability analysis in the $L^{2}-$norm for the BDF-q schemes up to order $%
q=5 $. To this end, we recast (\ref{se3:14}) conveniently for the
application of such a result. In doing so, we have that
\begin{equation}
\left\{
\begin{array}{l}
\displaystyle\sum_{i=0}^{q}\alpha _{i}\left( J^{n,n-i}\widehat{c}%
_{h}^{n-i},v_{h}\right) _{\Omega }+\Delta ta(c_{h}^{n},v_{h}) \\
\\
=\displaystyle\sum_{i=1}^{q}\alpha _{i}\left( J^{n,n-i}(\widehat{c}%
_{h}^{n-i}-\widetilde{c}_{h}^{n-i}),v_{h}\right) _{\Omega }+\displaystyle%
\sum_{i=1}^{q}\alpha _{i}\left( (J^{n,n-i}-\widetilde{J}^{n,n-i})\widetilde{c%
}_{h}^{n-i},v_{h}\right) _{\Omega }+\Delta t(f^{n},v_{h})_{\Omega }.%
\end{array}%
\right.  \label{se4:1}
\end{equation}%
According to the result of Nevanlinna and Odeh for the BDF-q schemes
with $1\leq q\leq 5$, see Lemma 1.1 of \cite{Ak} and references
therein, if the sequence $\{v^{i}\}_{i=0}^{q}$ belongs to an inner
product space $(W,(\cdot ,\cdot ))$ with associated norm $\left\Vert
\cdot \right\Vert $, then there exist constants $0\leq \eta _{q}<1$,
real numbers $a_{i}$, and a positive definite matrix $G=(g_{ij})\in \mathbb{R}%
^{q\times q}$ such that
\begin{equation}
\text{Re}\left( \sum_{i=0}^{q}\alpha _{i}v^{i},v^{q}-\eta _{q}v^{q-1}\right)
=\sum_{i,j=1}^{q}g_{ij}(v^{i},v^{j})-%
\sum_{i,j=1}^{q}g_{ij}(v^{i-1},v^{j-1})+\left\Vert
\sum_{i=0}^{q}a_{i}v^{i}\right\Vert ^{2}.  \label{se4:3}
\end{equation}%
The smallest possible values of $\eta _{q}$ are: $\eta _{1}=\eta _{2}=0,\
\eta _{3}=0.836,\ \eta _{4}=0.2878,\ \eta _{5}=0.8160$. To recast (\ref%
{se4:1}) in the light of this formula, we take as
inner product space $ L^{2}(\Omega )$ and interpret that for each $n$, $%
v^{i}=J^{n,n-(q-i)}\widehat{c}_{h}^{n-(q-i)}(x)=J^{n,n-(q-i)}c_{h}^{n-(q-i)}(X^{n,n-(q-i)}(x))$ so
that $v^{q}=\widehat{c}_{h}^{n}(x)=c_{h}^{n}\in V_{h}$ because $J^{n,n}=1$, and $v^{q-1}=
\widehat{c}_{h}^{(n-1)}(x):=c_{h}^{n-1}$. So, using (\ref{se4:3}) in (\ref%
{se4:1}) and setting $m=n-1$ yields
\begin{equation}
\begin{array}{r}
\displaystyle\sum_{i,j=0}^{q-1}g_{i+1j+1}\left( \int_{\Omega
}J^{n,n-i}J^{n,n-j}\widehat{c}_{h}^{n-i}\widehat{c}_{h}^{n-j}d\Omega
-\int_{\Omega }J^{m,m-i}J^{m,m-j}\widehat{c}_{h}^{m-i}\widehat{c}%
_{h}^{m-j}d\Omega \right) \\
\\
+\Delta t\mu \displaystyle\int_{\Omega }\nabla c_{h}^{n}\cdot \nabla \left(
c_{h}^{n}-\eta _{q}c_{h}^{n-1}\right) d\Omega +\Delta ta_{0}\displaystyle%
\int_{\Omega }c_{h}^{n}(c_{h}^{n}-\eta _{q}c_{h}^{n-1})d\Omega \\
\\
\leq \displaystyle\underbrace{\sum_{i=1}^{q}\alpha _{i}\int_{\Omega
}J^{n,n-i}(\widehat{c}_{h}^{n-i}-\widetilde{c}_{h}^{n-i})(c_{h}^{n}-\eta
_{q}c_{h}^{n-1})d\Omega }_{T_{1}} \\
\\
+\displaystyle\underbrace{\sum_{i=1}^{q}\alpha _{i}\int_{\Omega }\left(
\widetilde{J}^{n,n-i}-J^{n,n-i}\right) \widetilde{c}_{h}^{n-i}(c_{h}^{n}-%
\eta _{q}c_{h}^{n-1})d\Omega }_{T_{1}} \\
\\
+\Delta t\displaystyle\underbrace{\int_{\Omega }f^{n}(c_{h}^{n}-\eta
_{q}c_{h}^{n-1})d\Omega \ }_{T_{2}}\equiv T_{1}+T_{2}.%
\end{array}
\label{se4:3.5}
\end{equation}%
Let $\widehat{C}_{h}^{n}:=[J^{n,n}c_{h}^{n}(x),\ldots ,J^{n,n-q+1}\widehat{c}%
_{h}^{n-q+1}]^{T}$, since $G$ is a symmetric positive definite matrix we can
define a norm $\left\Vert \widehat{C}_{h}^{n}\right\Vert _{G}$ by%
\begin{equation*}
\left\Vert \widehat{C}_{h}^{n}\right\Vert
_{G}:=\sum_{i,j=0}^{q-1}g_{i+1j+1}\int_{\Omega }J^{n,n-i}J^{n,n-j}\widehat{c}%
_{h}^{n-i}\widehat{c}_{h}^{n-j}d\Omega,
\end{equation*}%
then the above inequality can be written as%
\begin{equation}
\left\Vert \widehat{C}_{h}^{n}\right\Vert _{G}-\left\Vert \widehat{C}%
_{h}^{m}\right\Vert _{G}+\Delta ta(c_{h}^{n},c_{h}^{n}-\eta
_{q}c_{h}^{n-1})\leq T1+T2.  \label{se4:4}
\end{equation}

We establish the stability of the BDF-q scheme (\ref{se4:1}) for $1\leq
q\leq 5$.

\begin{theorem}
\label{stability} Assume that $u\in $\ $C(\left[ 0,T\right] ;W^{k+1,\infty
}(\Omega )^{d})$. Let the constants%
\begin{equation*}
\begin{array}{l}
C(u):=C(\Omega ,\mathcal{O},C_{J},q,k,\sigma ^{\ast },\left\Vert u\right\Vert
_{L^{\infty }(W^{k+1,\infty }(\Omega )^{d})}), \\
\\
\lambda _{0}:=1+\sum_{i=1}^{q}\left\vert \alpha _{i}\right\vert \left(
2+C^{2}(u)h^{2k}\right) ,\ \beta _{0}:=1-\Delta t\lambda _{0}\vartheta ^{-1},
\\
\\
\beta _{1}:=\min (\mu ,a_{0})\vartheta ^{-1}\ \mathrm{and}\ \beta _{2}:=\frac{%
\max (\mu ,a_{0})\eta _{q}}{2},\
\end{array}%
\end{equation*}
with $\vartheta $ being the smallest eigenvalue of the matrix $G$.
Then, for $1\leq q\leq 5$, $\ h\in (0,h_{s})$, $\Delta t\in
(0,\Delta t_{s})$ and such that $\Delta t\vartheta
^{-1}\lambda _{0}<1$, the scheme (\ref{se4:1}) is stable in the $L^{2}-$%
norm, that is, there exists a constant $K$ such that%
\begin{equation}
\left\Vert c_{h}^{N}\right\Vert _{L^{2}(\Omega )}^{2}+\Delta t\beta
_{1}\sum_{n=q}^{N}\left\Vert c_{h}^{n}\right\Vert _{H^{1}(\Omega )}^{2}\leq
K\left( A+\Delta tB+\Delta t\sum_{n=q}^{N}\left\Vert f^{n}\right\Vert
_{L^{2}(\Omega )}^{2}\right) ,  \label{se4:41}
\end{equation}%
where $K=\vartheta ^{-1}\exp (\frac{\lambda _{0}\vartheta ^{-1}}{\beta _{0}}%
t_{N})$,\ $A=\xi \sum_{l=0}^{q-1}\left\Vert c_{h}^{l}\right\Vert
_{L^{2}(\Omega )}^{2}$, $\xi >0$ being a bounded constant depending upon the
coefficients $g_{ij}$ of the matrix $G$, and $B=\beta _{2}\left\Vert
c_{h}^{q-1}\right\Vert _{H^{1}(\Omega )}^{2}$.
\end{theorem}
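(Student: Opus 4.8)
The plan is to sum the fundamental inequality (\ref{se4:4}) over $n$ from $q$ to $N$, estimate the three error terms $T_1$ (split into two pieces of the same name in (\ref{se4:3.5})) and $T_2$ on the right-hand side, absorb the resulting $\|c_h^n\|_{L^2}^2$ and small $\|\nabla c_h^n\|_{L^2}^2$ contributions into the left-hand side using the coercivity of $a(\cdot,\cdot)$ together with the Nevanlinna--Odeh telescoping structure, and finally invoke the discrete Gronwall inequality (\ref{int5})--(\ref{int6}). The key algebraic fact making this work is that summing the $\|\widehat C_h^n\|_G - \|\widehat C_h^m\|_G$ terms telescopes (since $m=n-1$), leaving $\|\widehat C_h^N\|_G$ minus the contribution of the starting values, and that $\|\widehat C_h^N\|_G \geq \vartheta \sum_{i=0}^{q-1}\|J^{N,N-i}\widehat c_h^{N-i}\|_{L^2}^2 \geq \vartheta\|c_h^N\|_{L^2}^2$ by positive-definiteness of $G$ (smallest eigenvalue $\vartheta$), which produces the $\|c_h^N\|_{L^2}^2$ on the left of (\ref{se4:41}) with the factor $\vartheta^{-1}$ that appears in $K$.

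First I would handle the diffusion/reaction term $\Delta t\,a(c_h^n,c_h^n-\eta_q c_h^{n-1})$. Writing $a(c_h^n,c_h^n-\eta_q c_h^{n-1}) = \|c_h^n\|_a^2 - \eta_q a(c_h^n,c_h^{n-1})$ with $\|v\|_a^2 := \mu\|\nabla v\|_{L^2}^2 + a_0\|v\|_{L^2}^2$, and using Cauchy's elementary inequality $\eta_q a(c_h^n,c_h^{n-1}) \leq \tfrac{\eta_q}{2}\|c_h^n\|_a^2 + \tfrac{\eta_q}{2}\|c_h^{n-1}\|_a^2$, the sum over $n$ telescopes partially: the $\tfrac{\eta_q}{2}\|c_h^{n-1}\|_a^2$ terms are reabsorbed against $(1-\eta_q)\|c_h^n\|_a^2$, leaving a genuinely positive multiple of $\sum_n \|c_h^n\|_a^2 \geq \min(\mu,a_0)\sum_n\|c_h^n\|_{H^1}^2 = \vartheta\beta_1\sum_n\|c_h^n\|_{H^1}^2$, at the cost of one boundary term $\beta_2\|c_h^{q-1}\|_{H^1}^2$ — precisely the quantity $B$ in the statement. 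This is where $\eta_q<1$ (valid only for $q\leq 5$) is essential.

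Next I would bound the error terms. For the two $T_1$ pieces, Lemma~\ref{X-X} (inequality (\ref{se3:16})) controls $\|\widehat c_h^{n-i}-\widetilde c_h^{n-i}\|_{L^2} \leq C(u)h^{k+1}\Delta t\|\nabla c_h^{n-i}\|_{L^2}$, and the argument in the proof of Lemma~\ref{erintg} controls $\|J^{n,n-i}-\widetilde J^{n,n-i}\|_{L^\infty} \leq C(u)\Delta t h^{k+1}$, together with $\|J^{n,n-i}\|_{L^\infty}\leq C_J$ from (\ref{s2:2}); applying Cauchy--Schwarz and the elementary inequality against the test function $c_h^n-\eta_q c_h^{n-1}$ gives bounds of the form $C\Delta t h^{2k}(\|\nabla c_h^{n-i}\|_{L^2}^2 + \|c_h^{n-i}\|_{L^2}^2) + \tfrac{\epsilon}{2}(\|c_h^n\|_{L^2}^2 + \|c_h^{n-1}\|_{L^2}^2)$; this is the origin of the $C^2(u)h^{2k}$ term inside $\lambda_0$. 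The $T_2$ term is routine: $\Delta t\,(f^n,c_h^n-\eta_q c_h^{n-1}) \leq \Delta t\|f^n\|_{L^2}^2 + \tfrac{\Delta t}{4}(\ldots)$. Summing everything over $n$ and collecting, one arrives at an inequality of the form (\ref{int6}) with $a_N = \|\widehat C_h^N\|_G$, $b_n \sim \vartheta\beta_1\|c_h^n\|_{H^1}^2$, $\gamma_n = \lambda_0\vartheta^{-1}$ (after dividing through by $\vartheta$ to pass from $\|\widehat C_h^n\|_G$ back to $\|c_h^n\|_{L^2}^2$ via $\|\widehat C_h^n\|_G \geq \vartheta\|c_h^n\|_{L^2}^2$), and $c_n,B$ collecting the $f$-terms, the starting-value term $A$, and the boundary term $B=\beta_2\|c_h^{q-1}\|_{H^1}^2$. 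The hypothesis $\Delta t\vartheta^{-1}\lambda_0<1$ is exactly the condition $\Delta t\gamma_n<1$ needed for (\ref{int5}), and the Gronwall constant $\exp(\Delta t\sum\sigma_n\gamma_n)$ with $\sigma_n = (1-\Delta t\lambda_0\vartheta^{-1})^{-1} = \beta_0^{-1}$ yields the factor $\exp(\tfrac{\lambda_0\vartheta^{-1}}{\beta_0}t_N)$ in $K$.

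\textbf{The main obstacle} I anticipate is the bookkeeping of the quadratic form $\|\widehat C_h^n\|_G$: one must be careful that the telescoping is over the shifted window (the vector $\widehat C_h^n$ stacks $J^{n,n-i}\widehat c_h^{n-i}$ for $i=0,\dots,q-1$, so $\widehat C_h^m = \widehat C_h^{n-1}$ stacks $J^{n-1,n-1-i}\widehat c_h^{n-1-i}$ — the indices shift consistently), and that the lower bound $\|\widehat C_h^N\|_G \geq \vartheta\|c_h^N\|_{L^2}^2$ uses only the $i=0$ component where $J^{N,N}=1$. A secondary delicate point is that the coercivity-plus-telescoping manipulation of the $a(\cdot,\cdot)$ term must leave the $H^1$-sum with a strictly positive coefficient uniform in $\mu,a_0$ after factoring out $\min(\mu,a_0)$; this is where writing $\beta_1 = \min(\mu,a_0)\vartheta^{-1}$ and keeping track of the constant from $1-\eta_q$ matters, and where the separate boundary term $B$ is unavoidable because the starting value $c_h^{q-1}$ enters the $\eta_q$-coupling at the bottom of the sum.
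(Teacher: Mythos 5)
Your proposal follows essentially the same route as the paper's proof: the Nevanlinna--Odeh $G$-norm telescoping for the BDF sum, the split $a(c_h^n,c_h^n-\eta_q c_h^{n-1})\geq(1-\eta_q)\|c_h^n\|_a^2$ plus a telescoping $\tfrac{\eta_q}{2}$-difference (source of the boundary term $B$), the bound on the consistency terms via Lemma \ref{X-X}/Lemma \ref{erintg} combined with the inverse inequality to produce the $C^2(u)h^{2k}$ contribution in $\lambda_0$, the lower bound $\|\widehat{C}_h^N\|_G\geq\vartheta\|c_h^N\|_{L^2}^2$, and the discrete Gronwall inequality under $\Delta t\vartheta^{-1}\lambda_0<1$. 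The only cosmetic difference is that you sum first and estimate afterwards, and your intermediate bound for $T_1$ should read $C^2(u)h^{2k}\|c_h^{n-i}\|_{L^2}^2$ after the inverse inequality has been applied, as you correctly conclude.
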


\begin{proof}
We estimate the terms of (\ref{se4:4}). We start with $\Delta
ta(c_{h}^{n},c_{h}^{n}-\eta _{q}c_{h}^{n-1})$. Applying the Cauchy-Schwarz
inequality we readily obtain that
\begin{equation*}
\mu \int_{\Omega }\nabla c_{h}^{n}\cdot \nabla \left( c_{h}^{n}-\eta
_{q}c_{h}^{n-1}\right) d\Omega \geq \alpha \mu \left\Vert \nabla
c_{n}^{n}\right\Vert _{L^{2}(\Omega )}^{2}+\frac{\mu \eta _{q}}{2}\left(
\left\Vert \nabla c_{h}^{n}\right\Vert _{L^{2}(\Omega )}^{2}-\left\Vert
\nabla c_{h}^{n-1}\right\Vert _{L^{2}(\Omega )}^{2}\right) ,
\end{equation*}%
where\ $\alpha =1-\eta _{q}>0$. Similarly,%
\begin{equation*}
a_{0}\int_{\Omega }c_{h}^{n}(c_{h}^{n}-\eta _{q}c_{h}^{n-1})d\Omega \geq
\alpha a_{0}\left\Vert c_{n}^{n}\right\Vert _{L^{2}(\Omega )}^{2}+\frac{%
a_{0}\eta _{q}}{2}\left( \left\Vert c_{h}^{n}\right\Vert _{L^{2}(\Omega
)}^{2}-\left\Vert c_{h}^{n-1}\right\Vert _{L^{2}(\Omega )}^{2}\right) .
\end{equation*}%
Hence%
\begin{equation}
\begin{array}{r}
\Delta ta(c_{h}^{n},c_{h}^{n}-\eta _{q}c_{h}^{n-1})\geq \Delta t\alpha
\left( \mu \left\Vert \nabla c_{h}^{n}\right\Vert _{L^{2}(\Omega
)^{d}}^{2}+a_{0}\left\Vert c_{h}^{n}\right\Vert _{L^{2}(\Omega )}^{2}\right)
\\
\\
+\displaystyle\frac{\Delta t\mu \eta _{q}}{2}\left( \left\Vert \nabla
c_{h}^{n}\right\Vert _{L^{2}(\Omega )^{d}}^{2}-\left\Vert \nabla
c_{h}^{n-1}\right\Vert _{L^{2}(\Omega )^{d}}^{2}\right) \\
\\
+\displaystyle\frac{\Delta ta_{0}\eta _{q}}{2}\left( \left\Vert
c_{h}^{n}\right\Vert _{L^{2}(\Omega )}^{2}-\left\Vert c_{h}^{n-1}\right\Vert
_{L^{2}(\Omega )}^{2}\right) .%
\end{array}
\label{se4:5}
\end{equation}%
Next, we estimate the term $T_{1}$. To do so we use the triangle and H\"{o}lder inequalities and the fact that%
\begin{equation*}
\left\Vert c_{h}^{n}-\eta _{q}c_{h}^{n-1}\right\Vert _{L^{2}(\Omega )}\leq
\left\Vert c_{h}^{n}\right\Vert _{L^{2}(\Omega )}+\left\Vert
c_{h}^{n-1}\right\Vert _{L^{2}(\Omega )},
\end{equation*}%
then we can set
\begin{equation}
\begin{array}{l}
T_{1}\leq \sum_{i=1}^{q}\left\vert \alpha _{i}\right\vert \left( \left\Vert
J^{n,n-i}\right\Vert _{L^{\infty }(D)}\left\Vert (\widehat{c}_{h}^{n-i}-%
\widetilde{c}_{h}^{n-i})\right\Vert _{L^{2}(\Omega )}\right. \\
\\
\ \ \ \ \ \ \ \ +\left. \left\Vert J^{n,n-i}-\widetilde{J}%
^{n,n-i}\right\Vert _{L^{\infty }(\Omega )}\left\Vert \widetilde{c}%
_{h}^{n-i}\right\Vert _{L^{2}(D)}\right) (\left\Vert c_{h}^{n}\right\Vert
_{L^{2}(\Omega )}+\left\Vert c_{h}^{n-1}\right\Vert _{L^{2}(\Omega )}).%
\end{array}
\label{se4:5.1}
\end{equation}%
Employing the same arguments as in the proof of the estimate (\ref{I4}) as well as
the inverse inequality $\left\Vert \nabla c_{h}^{n-i}\right\Vert
_{L^{2}(\Omega )^{d}}\leq ch^{-1}\left\Vert c_{h}^{n-i}\right\Vert
_{L^{2}(\Omega )}$ we obtain that%
\begin{equation*}
T_{1}\leq C(u)\Delta th^{k}\sum_{i=1}^{q}\left\vert \alpha _{i}\right\vert
\left\Vert c_{h}^{n-i}\right\Vert _{L^{2}(\Omega )}(\left\Vert
c_{h}^{n}\right\Vert _{L^{2}(\Omega )}+\left\Vert c_{h}^{n-1}\right\Vert
_{L^{2}(\Omega )}).
\end{equation*}%
Using the elementary inequality it follows that
\begin{equation}
T_{1}\leq \Delta t\sum_{i=0}^{q}\zeta _{i}\left\Vert c_{h}^{n-i}\right\Vert
_{L^{2}(\Omega )}^{2},  \label{se4:6}
\end{equation}%
where for $2\leq i\leq q,\ \zeta _{i}=C^{2}(u)h^{2k}\left\vert \alpha
_{i}\right\vert $, $\zeta _{0}=\sum_{i=1}^{q}\left\vert \alpha
_{i}\right\vert $ and $\zeta _{1}=\sum_{i=1}^{q}\left\vert \alpha
_{i}\right\vert +C^{2}(u)h^{2k}\left\vert \alpha _{1}\right\vert $.

As for the term $T_{2}$, we apply the Cauchy-Schwarz and triangle inequality
and obtain that
\begin{equation}
\begin{array}{r}
T_{2}\leq \Delta t\left\Vert f^{n}\right\Vert _{L^{2}(\Omega )}\left(
\left\Vert c_{h}^{n}\right\Vert _{L^{2}(\Omega )}+\eta _{q}\left\Vert
c_{h}^{n-1}\right\Vert _{L^{2}(\Omega )}\right) \\
\\
\leq \Delta t\left\Vert f^{n}\right\Vert _{L^{2}(\Omega )}^{2}+\displaystyle%
\frac{\Delta t}{2}(\left\Vert c_{h}^{n}\right\Vert _{L^{2}(\Omega
)}^{2}+\left\Vert c_{h}^{n-1}\right\Vert _{L^{2}(\Omega )}^{2}).%
\end{array}
\label{se4:7}
\end{equation}%
Substituting (\ref{se4:5}), (\ref{se4:6}) and (\ref{se4:7}) into (\ref{se4:4}%
) yields%
\begin{equation*}
\begin{array}{r}
\left\Vert \widehat{C}_{h}^{n}\right\Vert _{G}-\left\Vert \widehat{C}%
_{h}^{m}\right\Vert _{G}+\Delta t\alpha \left( \mu \left\Vert \nabla
c_{h}^{n}\right\Vert _{L^{2}(\Omega )}^{2}+a_{0}\left\Vert
c_{h}^{n}\right\Vert _{L^{2}(\Omega )}^{2}\right) \\
\\
+\Delta t\displaystyle\frac{\mu \eta _{q}}{2}\left( \left\Vert \nabla
c_{h}^{n}\right\Vert _{L^{2}(\Omega )^{d}}^{2}-\left\Vert \nabla
c_{h}^{n-1}\right\Vert _{L^{2}(\Omega )^{d}}^{2}\right) \\
\\
+\Delta t\displaystyle\frac{a_{0}\eta _{q}}{2}\left( \left\Vert
c_{h}^{n}\right\Vert _{L^{2}(\Omega )}^{2}-\left\Vert c_{h}^{n-1}\right\Vert
_{L^{2}(\Omega )}^{2}\right) \\
\\
\leq \Delta t\sum_{i=0}^{q}\zeta _{i}^{\ast }\left\Vert
c_{h}^{n-i}\right\Vert _{L^{2}(\Omega )}^{2}+\Delta t\left\Vert
f^{n}\right\Vert _{L^{2}(\Omega )}^{2},%
\end{array}%
\end{equation*}%
where $\zeta _{0}^{\ast }=\zeta _{0}+1/2,\ \zeta _{1}^{\ast }=\zeta _{1}+1/2$
and \ for $i\geq 2$, $\zeta _{i}^{\ast }=\zeta _{i}$. Summing this
expression from $n=q$ up to $N$ we obtain that%
\begin{equation*}
\begin{array}{l}
\left\Vert \widehat{C}_{h}^{N}\right\Vert _{G}+\Delta t\gamma
_{1}\sum_{n=q}^{N}\left\Vert c_{h}^{n}\right\Vert _{H^{1}(\Omega )}^{2} \\
\\
\leq \left\Vert \widehat{C}_{h}^{q-1}\right\Vert _{G}+\Delta t\displaystyle%
\beta _{2}\left\Vert c_{h}^{q-1}\right\Vert _{H^{1}(\Omega )}^{2} \\
\\
+\Delta t\sum_{n=q}^{N}\left\Vert f^{n}\right\Vert _{L^{2}(\Omega
)}^{2}+\Delta t\lambda _{0}\sum_{n=0}^{N}\left\Vert c_{h}^{n}\right\Vert
_{L^{2}(\Omega )}^{2},%
\end{array}%
\end{equation*}%
where $\lambda _{0}=\sum_{i=0}^{q}\zeta _{i}^{\ast }$ and $\gamma _{1}=\min
(\mu ,a_{0})$. Since the matrix $G$ is symmetric and positive definite and $%
J^{N,N}=1$, then we can set that%
\begin{equation*}
\left\Vert \widehat{C}_{h}^{N}\right\Vert _{G}\geq \vartheta \left(
\left\Vert c_{h}^{N}\right\Vert _{L^{2}(\Omega )}^{2}+\ldots +\left\Vert
J^{N,N-q+1}\widehat{c}_{h}^{N-q+1}\right\Vert _{L^{2}(\Omega )}^{2}\right) ,
\end{equation*}%
$\vartheta $ being the smallest eigenvalue of $G$, and there is a positive
constant $\xi $ depending on the coefficients $g_{ij}$ such that
\begin{equation*}
\left\Vert \widehat{C}_{h}^{q-1}\right\Vert _{G}\leq \xi \left( \left\Vert
c_{h}^{0}\right\Vert _{L^{2}(\Omega )}^{2}+\ldots +\left\Vert
c_{h}^{q-1}\right\Vert _{L^{2}(\Omega )}^{2}\right) .
\end{equation*}%
Hence, we obtain that
\begin{equation*}
\begin{array}{l}
\left\Vert c_{h}^{N}\right\Vert _{L^{2}(\Omega )}^{2}+\Delta t\beta
_{1}\sum_{n=q}^{N}\left\Vert c_{h}^{n}\right\Vert _{H^{1}(\Omega )}^{2} \\
\\
\leq \xi \vartheta ^{-1}\sum_{l=0}^{q-1}\left\Vert c_{h}^{l}\right\Vert
_{L^{2}(\Omega )}^{2}+\Delta t\displaystyle\beta _{2}\vartheta
^{-1}\left\Vert c_{h}^{q-1}\right\Vert _{H^{1}(\Omega )}^{2} \\
\\
+\Delta t\vartheta ^{-1}\sum_{n=q}^{N}\left\Vert f^{n}\right\Vert
_{L^{2}(\Omega )}^{2}+\Delta t\vartheta ^{-1}\lambda
_{0}\sum_{n=0}^{N}\left\Vert c_{h}^{n}\right\Vert _{L^{2}(\Omega )}^{2}.%
\end{array}%
\end{equation*}%
Under the condition that $\Delta t\vartheta ^{-1}\lambda _{0}<1$, the
application of Gronwall inequality yields the result (\ref{se4:41}).
\end{proof}

\subsection{Error Analysis}

We estimate the error in the $L^{2}$-norm. Let $c(x,t)$ be the solution of (%
\ref{int1}) and $c_{h}^{n}$ be the solution of (\ref{s2:11}) for $1\leq
q\leq 5$, then for all $n\geq q$ we set%
\begin{equation}
e^{n}(x):=c^{n}(x)-c_{h}^{n}(x)=\rho ^{n}(x)+\theta _{h}^{n}(x),
\label{se5:1}
\end{equation}%
with%
\begin{equation}
\rho ^{n}(x):=c^{n}(x)-R_{h}c^{n}(x)\ \ \mathrm{and\ }\ \theta
_{h}^{n}(x)=:R_{h}c^{n}(x)-c_{h}^{n}(x).  \label{se5:2}
\end{equation}%
$R_{h}c^{n}(x)$ is the elliptic projection of $c^{n}(x)$ onto $V_{h}$
defined by%
\begin{equation}
a\left( R_{h}c^{n}-c^{n},v_{h}\right) =0\ \ \forall v_{h}\in V_{h}.
\label{se5:3}
\end{equation}%
It is clear that $R_{h}c^{n}$ exists and is unique. We have the following
estimate for $\rho ^{n}(x)$ \cite{DFJ}.

\begin{lemma}
\label{rho} Assuming that $c$ and $\displaystyle\frac{\partial c}{\partial t}%
\in L^{2}(0,T;H^{k+1}(\Omega ))$, there is a constant $C$ independent of $h$
such that%
\begin{equation}
\left\Vert \rho \right\Vert _{L^{2}(0,T;L^{2}(\Omega ))}+\left\Vert \frac{%
\partial \rho }{\partial t}\right\Vert _{L^{2}(0,T;L^{2}(\Omega ))}\leq
Ch^{k+1}\left[ \left\Vert c\right\Vert _{L^{2}(0,T;H^{k+1}(\Omega
))}+\left\Vert \frac{\partial c}{\partial t}\right\Vert
_{L^{2}(0,T;H^{k+1}(\Omega ))}\right] .  \label{se5:4}
\end{equation}
\end{lemma}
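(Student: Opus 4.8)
This is a standard finite-element elliptic-projection estimate, so the proof proceeds by reducing everything to the classical interpolation/Galerkin-orthogonality argument and then differentiating in time. First I would recall that the bilinear form $a(\cdot,\cdot)$ defined in (\ref{se2:101}) is continuous and coercive on $H^1(\Omega)$ (with coercivity constant $\min(\mu,a_0)$ and continuity constant $\max(\mu,a_0)$, say), so that $R_h c^n$ as defined by (\ref{se5:3}) is well defined and is the Ritz/elliptic projection. Then a Céa-type argument gives, for each fixed $t$,
\begin{equation*}
\left\Vert c(t)-R_h c(t)\right\Vert_{H^1(\Omega)} \le C\,\inf_{v_h\in V_h}\left\Vert c(t)-v_h\right\Vert_{H^1(\Omega)} \le C\,h^{k}\left\Vert c(t)\right\Vert_{H^{k+1}(\Omega)},
\end{equation*}
invoking the approximation properties of $V_h$ stated in Theorems \ref{theorem3} and \ref{corollary1} of the Appendix (which cover the curved-simplex case). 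An Aubin--Nitsche duality argument on $\Omega$ — legitimate since the Neumann problem for $-\mu\Delta+a_0$ is $H^2$-regular on a piecewise-smooth Lipschitz domain — then upgrades this to the $L^2$ estimate $\left\Vert \rho(t)\right\Vert_{L^2(\Omega)} \le C h^{k+1}\left\Vert c(t)\right\Vert_{H^{k+1}(\Omega)}$.

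The second half is the time-derivative bound. The key observation is that differentiating the defining relation (\ref{se5:3}) in $t$ and using that $V_h$ is \emph{fixed in time} (it is the reference mesh space, not a moving one) gives $a\bigl(\partial_t(R_h c^n - c^n), v_h\bigr) = 0$ for all $v_h \in V_h$; in other words $R_h(\partial_t c) = \partial_t(R_h c)$. Hence $\partial_t\rho = \partial_t c - R_h(\partial_t c)$ is itself the elliptic-projection error of $\partial_t c$, and the same Céa/Aubin--Nitsche chain yields $\left\Vert \partial_t\rho(t)\right\Vert_{L^2(\Omega)} \le C h^{k+1}\left\Vert \partial_t c(t)\right\Vert_{H^{k+1}(\Omega)}$. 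Squaring both pointwise-in-$t$ estimates, integrating over $(0,T)$, taking square roots, and adding the two contributions (using $\sqrt{a^2+b^2}\le a+b$ for nonnegative $a,b$) produces exactly (\ref{se5:4}) with a constant $C$ depending only on $\mu,a_0$ through the ratio $\max(\mu,a_0)/\min(\mu,a_0)$ — but since the statement only claims $C$ independent of $h$, this dependence is harmless; one may also absorb it by noting the elliptic projection for the operator $-\Delta + (a_0/\mu)\,\mathrm{Id}$ gives a $\mu$-independent constant after rescaling.

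\textbf{Main obstacle.} The only genuinely nontrivial points are (i) justifying the $H^2$-elliptic regularity needed for Aubin--Nitsche on the curved, piecewise-smooth Lipschitz domain $\Omega$ with the homogeneous Neumann boundary condition $\mu\nabla c\cdot\nu=0$ — this is where one must appeal to the regularity theory cited (and implicitly to the results collected in the Appendix on curved elements, so that the interpolation estimates genuinely hold with the stated powers of $h$ despite the curved faces); and (ii) the interchange $R_h(\partial_t c) = \partial_t(R_h c)$, which is clean here precisely because $V_h$ does not move — had the projection been onto a time-dependent space $V_h(t)$ there would be extra commutator terms, but for this lemma it is onto the fixed $V_h$. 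I expect the regularity/approximation bookkeeping to be the part the authors dispatch by citation (the lemma is attributed to \cite{DFJ}), with the rest being routine. Accordingly my proof would be short: state coercivity/continuity of $a$, invoke Appendix Theorems \ref{theorem3}--\ref{corollary1} for the interpolation bound, run Céa plus Aubin--Nitsche pointwise in $t$, differentiate (\ref{se5:3}) to handle $\partial_t\rho$, then integrate in time.
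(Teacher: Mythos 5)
Your proof is correct and is exactly the standard Ritz-projection argument (C\'{e}a plus Aubin--Nitsche, commuting $R_h$ with $\partial_t$ because $V_h$ and $a(\cdot,\cdot)$ are time-independent, then integrating in $t$) that underlies the result the paper simply cites from \cite{DFJ} without giving any proof. Your identification of the two genuine hypotheses — $H^2$ regularity of the Neumann problem for the duality step, and the fixedness of $V_h$ for the interchange $R_h(\partial_t c)=\partial_t(R_h c)$ — matches what that citation implicitly assumes, and your remark on the $\max(\mu,a_0)/\min(\mu,a_0)$ dependence of the constant is consistent with the constant $C_0$ the authors later use explicitly in the error analysis.
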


The next lemma is an adaptation of Lemma 6 of \cite{BS2} to the
present method. The first inequality (\ref{se5:5}) is crucial to
obtain the optimal error estimate in the $L^{2}$-norm when $\min
(\mu ,a_{0})$ is in the range of moderately small to large values,
and it is basically due to \cite{DR}, Lemma 1.

\begin{lemma}
\label{rho2} For all $n\geq q,\ $let $v^{n-i}\in H^{1}(\Omega )$, the
following estimates hold%
\begin{equation}
\left\Vert J^{n,n-i}\left( v^{n-i}-\widehat{v}^{n-i}\right) \right\Vert
_{-1}\leq i\Delta tC_{J}K(u)\left\Vert v^{n-i}\right\Vert _{L^{2}(\Omega )},
\label{se5:5}
\end{equation}%
\begin{equation}
\begin{array}{l}
\left\Vert J^{n,n-i}\left( v^{n-i}-\widehat{v}^{n-i}\right) \right\Vert
_{L^{2}(\Omega )} \\
\\
\leq \Delta tC_{1J}\min \left( C_{2J}K(u)i\left\Vert \nabla
v^{n-i}\right\Vert _{L^{2}(\Omega )^{d}},\displaystyle\left\Vert \frac{%
v^{n-i}}{\Delta t}\right\Vert _{L^{2}(\Omega )}\right) .%
\end{array}
\label{se5:6}
\end{equation}%
where $\widehat{v}^{n-i}:=v^{n-i}(X^{n,n-i}(x))$, $\left\Vert \cdot
\right\Vert _{-1}$ denotes the norm in the dual space of $H^{1}(\Omega )$,
and the constants%
\begin{equation*}
K(u):=\left\Vert u\right\Vert _{L^{\infty }(L^{\infty }(\Omega ^{d}))},\
C_{J}:=\exp (q\Delta tC_{\mathrm{div}}),\ C_{1J}:=C_{J}(1+C_{J}),\ C_{2J}:=%
\frac{C_{J}^{1/2}}{1+C_{J}}.
\end{equation*}
\end{lemma}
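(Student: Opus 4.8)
The plan is to prove both displays by duality, writing the difference $v^{n-i}-\widehat v^{n-i}$ through its behaviour along the characteristic curves, in the spirit of Lemma~1 of \cite{DR} and Lemma~6 of \cite{BS2}. Two structural facts will be used repeatedly: $\overline\Omega$ is invariant under the flow, so every change of variables $y=X(x,t_n;s)$ is a bi-Lipschitz bijection of $\Omega$ onto itself whose Jacobian determinant is pinched by (\ref{s2:2}); and $u\cdot\nu=0$ on $\partial\Omega$, which makes all boundary terms produced by integration by parts vanish. Because $v^{n-i}$ lies only in $H^1(\Omega)$, I will first derive the differential identities below for a smooth surrogate of $v^{n-i}$ and then pass to the limit, using that $w\mapsto w(X(\cdot,t_n;s))$ is continuous on $L^2(\Omega)$ by the change of variables; this routine density step is suppressed in what follows.

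For the negative-norm bound (\ref{se5:5}) I fix $\phi\in H^1(\Omega)$ with $\|\phi\|_{H^1(\Omega)}\le 1$ and introduce
\[
\Lambda(s):=\int_\Omega J(x,t_n;s)\bigl(v^{n-i}(x)-v^{n-i}(X(x,t_n;s))\bigr)\phi(x)\,d\Omega,\qquad s\in[t_{n-i},t_n],
\]
so that $\Lambda(t_n)=0$ and $\Lambda(t_{n-i})=\bigl(J^{n,n-i}(v^{n-i}-\widehat v^{n-i}),\phi\bigr)_\Omega$, whence $\Lambda(t_{n-i})=-\int_{t_{n-i}}^{t_n}\Lambda'(s)\,ds$. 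Applying the change of variables $y=X(x,t_n;s)$ to the second summand turns it into $\int_\Omega v^{n-i}(y)\,\phi(X(y,s;t_n))\,d\Omega$; then, using the Jacobian equation (\ref{s2:0}) for $\partial_s J(x,t_n;s)$ and the transport identity $\partial_s\bigl[\phi(X(y,s;t_n))\bigr]=-u(y,s)\cdot\nabla_y\bigl[\phi(X(y,s;t_n))\bigr]$ (valid since $t\mapsto\phi(X(\cdot,t;t_n))$ has vanishing material derivative), I obtain
\[
\Lambda'(s)=\int_\Omega J(x,t_n;s)\,(\nabla\cdot u)(X(x,t_n;s),s)\,v^{n-i}(x)\phi(x)\,d\Omega+\int_\Omega v^{n-i}(y)\,u(y,s)\cdot\nabla_y\bigl[\phi(X(y,s;t_n))\bigr]\,d\Omega.
\]
Both integrals are estimated by H\"older's inequality: $|J(x,t_n;s)|\le C_J$, the transported gradient obeys $\|\nabla_y[\phi(X(\cdot,s;t_n))]\|_{L^2(\Omega)}\le C\,\|\nabla\phi\|_{L^2(\Omega)}$ by the chain rule, (\ref{int4}) and one more change of variables, and $|u|\le K(u)$. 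Integrating over $[t_{n-i},t_n]$, a time interval of length $i\Delta t$, and taking the supremum over $\phi$ yields (\ref{se5:5}); the term carrying $\nabla\cdot u$ is of the same order $i\Delta t$ and is responsible for the $C_J$ in the constant.

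The $L^2$-bounds (\ref{se5:6}) are more elementary. For the alternative carrying $\|\nabla v^{n-i}\|_{L^2}$ I use the representation $v^{n-i}(x)-\widehat v^{n-i}(x)=\int_{t_{n-i}}^{t_n}(\nabla v^{n-i})(X(x,t_n;s))\cdot u(X(x,t_n;s),s)\,ds$; Minkowski's integral inequality followed by the change of variables $y=X(x,t_n;s)$ and (\ref{s2:2}) gives $\|v^{n-i}-\widehat v^{n-i}\|_{L^2(\Omega)}\le i\Delta t\,C_J^{1/2}K(u)\,\|\nabla v^{n-i}\|_{L^2(\Omega)^d}$, and multiplying by $\|J^{n,n-i}\|_{L^\infty(\Omega)}\le C_J$ together with $C_{1J}C_{2J}=C_J^{3/2}$ produces the first entry of the minimum. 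The second entry is just the triangle inequality and (\ref{se3:17}): $\|J^{n,n-i}(v^{n-i}-\widehat v^{n-i})\|_{L^2(\Omega)}\le C_J\|v^{n-i}\|_{L^2(\Omega)}+C_J\|\widehat v^{n-i}\|_{L^2(\Omega)}\le C_J(1+C_J)\|v^{n-i}\|_{L^2(\Omega)}=C_{1J}\|v^{n-i}\|_{L^2(\Omega)}$.

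The crux, and the reason (\ref{se5:5}) is singled out as decisive, is that its right-hand side must contain only $\|v^{n-i}\|_{L^2}$, not $\|v^{n-i}\|_{H^1}$; this forces one to unload the single space derivative from $v^{n-i}$ onto the smooth test function $\phi$, which is exactly what the change of variables plus the transport identity for $\phi(X(\cdot,s;t_n))$ achieve, rather than a crude integration by parts that would leave a derivative on $v^{n-i}$. The rest is bookkeeping of the flow-map and Jacobian constants from (\ref{s2:2}) and (\ref{int4}) and the density argument needed because $v^{n-i}$ is not classically differentiable.
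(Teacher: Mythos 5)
The paper does not actually prove this lemma: it only remarks that the result is ``an adaptation of Lemma 6 of \cite{BS2}'' and ``basically due to \cite{DR}, Lemma 1'', so your write-up supplies an argument the authors omit, and it follows exactly the classical route of those references. Your treatment of (\ref{se5:6}) is complete and reproduces the stated constants precisely: the fundamental-theorem-of-calculus representation along the characteristics, Minkowski's integral inequality and the change of variables with $(J^{n,n-i})^{-1}\leq C_{J}$ give $i\Delta t\,C_{J}^{1/2}K(u)\left\Vert \nabla v^{n-i}\right\Vert _{L^{2}(\Omega )^{d}}$, and multiplying by $\left\Vert J^{n,n-i}\right\Vert _{L^{\infty }(\Omega )}\leq C_{J}$ yields $C_{1J}C_{2J}=C_{J}^{3/2}$; the second entry of the minimum is indeed just the triangle inequality plus (\ref{se3:17}). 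Your duality argument for (\ref{se5:5}) is also the right idea --- the essential point, which you identify correctly, is to unload the single derivative onto the test function through the change of variables $y=X(x,t_{n};s)$ and the transport identity for $\phi (X(y,s;t_{n}))$, exactly as in Lemma 1 of \cite{DR}. The one place where your argument does not literally close is the constant: the term $\int_{\Omega }J\,(\nabla \cdot u)(X(x,t_{n};s),s)\,v^{n-i}\phi \,d\Omega $ is bounded by $C_{J}C_{\mathrm{div}}\left\Vert v^{n-i}\right\Vert _{L^{2}(\Omega )}\left\Vert \phi \right\Vert _{L^{2}(\Omega )}$, and the gradient term picks up an extra factor $\sup \left\Vert DX(\cdot ,s;t_{n})\right\Vert \leq \exp (q\Delta t\left\vert u\right\vert _{W^{1,\infty }})$ from the chain rule, so what you actually obtain is $i\Delta t\,C\,(K(u)+C_{\mathrm{div}})\left\Vert v^{n-i}\right\Vert _{L^{2}(\Omega )}$ rather than $i\Delta t\,C_{J}K(u)\left\Vert v^{n-i}\right\Vert _{L^{2}(\Omega )}$ with $K(u)$ only the $L^{\infty }$ norm of $u$. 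This discrepancy is inherited from the cited lemmas, which are stated for $\nabla \cdot u=0$ (there $J\equiv 1$ and the divergence term vanishes); it is harmless for the error analysis, which only uses the structure $O(i\Delta t)\left\Vert v^{n-i}\right\Vert _{L^{2}(\Omega )}$, but you should either enlarge $K(u)$ to a $W^{1,\infty }$ norm or record the additive $C_{\mathrm{div}}$ contribution explicitly rather than asserting that the divergence term ``is responsible for the $C_{J}$''.
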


In this section we assume that the following assumptions hold.

\begin{itemize}
\item \textbf{A}1 $c^{0}(x)\in H^{k+1}(\Omega );$

\item \textbf{A2}\ $c(x,t)\in L^{\infty }(0,T;H^{k+1}(\Omega ));$

\item \textbf{A3} $\frac{\partial c}{\partial t}\in L^{2}(0,T;H^{k+1}(\Omega ))$ \
and $\frac{D^{q+1}\left( Jc\right) }{Dt^{q+1}}\in L^{2}(\Omega )$

\item \textbf{A4}\ $u\in C(\left[ 0,T\right] ;W^{k+1,\infty }(\Omega )^{d});$

\item \textbf{A5}\
Let $\Delta t_{s}$ and $h_{s}$ be the parameters discussed in the proof of Lemma
\ref{Halpha}.  Let $h\in \left( 0,h_{s}\right) $, $\lambda _{0}$
and $\vartheta ^{-1}$ be the constants defined in the statement of
Theorem \ref{stability} and $\Delta t\in (0,\Delta t_{s})$
satisfying $\Delta t\vartheta ^{-1}\lambda _{0}<1$;.

\item \textbf{A6} Theorem \ref{corollary1} in the Appendix.
\end{itemize}

\begin{theorem}
\label{error} Under the assumptions \textbf{A1}-\textbf{A6} there are positive bounded
constants $F_{1}$, $F_{2}$, $F_{3}$, $F_{4}$, and $F_{5}$ such that
\begin{equation}
\begin{array}{l}
\displaystyle\max_{q\leq m\leq N}\left\Vert e^{m}\right\Vert _{L^{2}(\Omega
)}^{2}\leq F_{1}\left( F_{2}h^{2(k+1)}+F_{3}\Delta t^{2q}+F_{4}\right. \\
\\
+\left. F_{5}K^{2}(u)\min \displaystyle\left( \frac{h^{2}}{\gamma _{1}},%
\frac{1}{G_{q}(1+C_{J})},\frac{C_{J}}{\frac{K^{2}(u)\Delta t^{2}}{h^{2}}}%
\right) h^{2k}\right)%
\end{array}
\label{se5:7.1}
\end{equation}%
where $F_{1}=\vartheta ^{-1}\exp (\frac{\lambda _{0}\vartheta ^{-1}}{\beta
_{0}}t_{N})$, $F_{2}=F_{2}(c,c_{t}),\ F_{3}=C\left\Vert \frac{D^{q+1}Jc}{%
Dt^{q+1}}\right\Vert _{L^{2}(\Omega )}^{2}$, $G_{q}=\displaystyle\frac{%
\sum_{i=1}^{q}i^{2}\alpha _{i}^{2}}{\sum_{i=1}^{q}\alpha _{i}^{2}}$,%
\begin{equation*}
F_{4}=\xi \sum_{i=0}^{q-1}\left\Vert e^{i}\right\Vert _{L^{2}(\Omega )}^{2}+%
\frac{\Delta t\eta _{q}\max (\mu ,a_{0})}{2}(\left\Vert e^{q-1}\right\Vert
_{H^{1}(\Omega )}^{2}+Ch^{2k}),\ F_{5}=F_{5}(q,c,\alpha _{i}).
\end{equation*}
\end{theorem}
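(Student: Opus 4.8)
The plan is to derive an error equation for $\theta_h^n = R_h c^n - c_h^n$ by subtracting the scheme (\ref{se3:14}) from the weak formulation satisfied by the exact solution, and then to apply exactly the stability machinery of Theorem \ref{stability} with $c_h^n$ replaced by $\theta_h^n$ and an extra right-hand side coming from consistency errors. First I would write the exact solution's identity: testing (\ref{s2:7}) at $t=t_n$ against $v_h\in V_h$ and replacing the material derivative $\tfrac{D(Jc)}{Dt}|_{t_n}$ by its BDF-$q$ difference quotient $\tfrac{1}{\Delta t}\sum_{i=0}^q \alpha_i J^{n,n-i}\widehat{c}^{\,n-i}$ introduces a truncation term controlled by $\|\tfrac{D^{q+1}(Jc)}{Dt^{q+1}}\|_{L^2(\Omega)}$ — this is the standard BDF-$q$ Taylor remainder, and it is exactly where assumption \textbf{A3} enters, producing the $F_3\Delta t^{2q}$ contribution with $G_q=\sum i^2\alpha_i^2/\sum\alpha_i^2$ appearing through the Nevanlinna–Odeh weighting. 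Subtracting the scheme, using the elliptic-projection orthogonality (\ref{se5:3}) to kill the $a(\rho^n,v_h)$ term, and inserting $c^n = \rho^n + R_hc^n$ gives an identity for $\theta_h^n$ whose right-hand side collects: (i) the BDF truncation error; (ii) the term $\tfrac1{\Delta t}\sum_{i=1}^q\alpha_i(J^{n,n-i}(\rho^{n-i}-\widehat\rho^{\,n-i}),v_h)_\Omega$, handled by Lemma \ref{rho2} (both (\ref{se5:5}) and (\ref{se5:6}), the min being the source of the $\min(\cdot,\cdot,\cdot)$ structure) combined with the $\rho$-bounds of Lemma \ref{rho}; (iii) the quadrature/interpolation replacement error $\sum_{i=1}^q\alpha_i\,IE^{n,n-i}$ estimated by Lemma \ref{erintg}; and (iv) the term $\tfrac1{\Delta t}\sum_{i=1}^q\alpha_i(J^{n,n-i}(R_hc^{n-i}-\widehat{R_hc}^{\,n-i}) - \text{same with }\widetilde X,\widetilde J)$ which again is Lemma \ref{erintg}-type plus Lemma \ref{rho2} applied to $R_hc^{n-i}$.

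Next I would run the Nevanlinna–Odeh/$G$-stability argument verbatim as in the proof of Theorem \ref{stability}: test the $\theta_h$-error equation against $\theta_h^n - \eta_q\theta_h^{n-1}$, use the $G$-matrix identity (\ref{se4:3}) to telescope $\|\widehat\Theta_h^n\|_G$, bound the diffusion–reaction bilinear form from below by (\ref{se4:5}) with $\alpha=1-\eta_q$, and move every consistency term to the right using Cauchy–Schwarz and the elementary inequality so that each is absorbed into $\Delta t\sum\zeta_i\|\theta_h^{n-i}\|_{L^2}^2$ plus a clean data term. The $J^{n,n-i}(\widehat\theta-\widetilde\theta)$-type terms use the inverse inequality $\|\nabla c_h^{n-i}\|\le ch^{-1}\|c_h^{n-i}\|$ exactly as in the $T_1$ estimate, producing the $h^{2k}$ factors; the $\rho$-difference terms contribute $h^{2(k+1)}$ (via Lemma \ref{rho}) and, in the alternative branch of the min, $h^{2(k+1)}/\gamma_1$ or $h^{2(k+1)}/(\Delta t^2)$ depending on whether (\ref{se5:5}) or (\ref{se5:6}) is used. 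Summing from $n=q$ to $N$, using positivity of $G$ to bound $\|\widehat\Theta_h^N\|_G\ge\vartheta\|\theta_h^N\|_{L^2}^2$ and $\|\widehat\Theta_h^{q-1}\|_G\le\xi\sum_{l=0}^{q-1}\|\theta_h^l\|_{L^2}^2$, and invoking the discrete Gronwall inequality (\ref{int5}) under $\Delta t\vartheta^{-1}\lambda_0<1$ (assumption \textbf{A5}) gives the bound for $\max_m\|\theta_h^m\|_{L^2}^2$ with prefactor $F_1=\vartheta^{-1}\exp(\tfrac{\lambda_0\vartheta^{-1}}{\beta_0}t_N)$. Finally the triangle inequality $\|e^m\|\le\|\rho^m\|+\|\theta_h^m\|$ together with Lemma \ref{rho} (bounding $\max_m\|\rho^m\|$ by $Ch^{k+1}\|c\|_{L^\infty(H^{k+1})}$, which feeds into $F_2$) and the observation that $\|\theta_h^i\|\le\|e^i\|+\|\rho^i\|$ for the starting values (giving the $F_4$ term with the $e^i$-norms and the $\eta_q\max(\mu,a_0)$ factor for $\theta_h^{q-1}$ in $H^1$) yields (\ref{se5:7.1}).

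The main obstacle is the careful bookkeeping of the three competing bounds that produce the $\min\bigl(\tfrac{h^2}{\gamma_1},\tfrac1{G_q(1+C_J)},\tfrac{C_J}{K^2(u)\Delta t^2/h^2}\bigr)h^{2k}$ factor: one must track, for the term $\tfrac1{\Delta t}\sum\alpha_i(J^{n,n-i}(R_hc^{n-i}-\widehat{R_hc}^{\,n-i}),v_h)$, that Lemma \ref{rho2} offers three simultaneous estimates — the $H^{-1}$ bound (\ref{se5:5}) paired with $\|\nabla\theta_h^n\|$ via the $\mu\|\nabla\theta_h^n\|^2$ coercivity term (giving the $h^2/\gamma_1$ branch after Young's inequality with weight $\mu$), the $L^2$-gradient bound in (\ref{se5:6}) paired directly with $\|\theta_h^n-\eta_q\theta_h^{n-1}\|_{L^2}$ (giving the $G_q(1+C_J)$ branch through the Nevanlinna–Odeh constants), and the crude $\|v^{n-i}/\Delta t\|_{L^2}$ bound in (\ref{se5:6}) combined with the inverse inequality (giving the $C_J h^2/(K^2\Delta t^2)$ branch). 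Keeping the constants $C_{1J},C_{2J},G_q$ straight and checking that each branch indeed closes the Gronwall loop with the advertised $F_i$ is the delicate part; the rest is a mechanical repetition of the Theorem \ref{stability} argument. A minor additional subtlety is justifying that $R_hc^{n-i}\in V_h\subset H^1(\Omega)$ has the regularity required to apply Lemma \ref{rho2}, which follows from \textbf{A2} and standard elliptic-projection stability.
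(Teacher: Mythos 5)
Your proposal follows essentially the same route as the paper's proof: the same splitting $e^n=\rho^n+\theta_h^n$ via the elliptic projection, the same error equation with the BDF truncation term, the $\rho$-transport terms estimated by Lemma \ref{rho2} in its $H^{-1}$ and $L^{2}$ variants to produce the three-way $\min$, the quadrature-replacement error via Lemma \ref{erintg}, and the Nevanlinna--Odeh $G$-stability telescoping followed by discrete Gronwall and the triangle inequality. The only slips are cosmetic: $G_{q}$ arises from the $i$-weights in Lemma \ref{rho2} applied to the $\rho^{n-i}-\widehat{\rho}^{\,n-i}$ terms (not from the BDF Taylor remainder), and the third branch of the $\min$ comes directly from the $\Vert \rho^{n-i}/\Delta t\Vert_{L^{2}}$ option in (\ref{se5:6}) together with $\Vert\rho^{n-i}\Vert_{L^{2}}\leq Ch^{k+1}\Vert c^{n-i}\Vert_{H^{k+1}(\Omega)}$ rather than from an inverse inequality.
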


\begin{remark}
\label{regimes} In the statement of the theorem it is worth noting the
following items: 1) The exponential dependence on the coefficient $\gamma
_{1}^{-1}$ (recall that $\gamma _{1}=min(\mu ,a_{0})$) has been eliminated;
to this respect see \cite{CW} where the error constants are carefully
calculated and the exponential dependence upon the coefficient $\mu ^{-1}$
is also eliminated. 2) The fourth term on the right side of the inequality (%
\ref{se5:7.1}) describes the behavior of the error at different regimes of
the solution, i.e., diffusive regime and advection-dominated regime. Thus,
(i) at low local P\'{e}clet number, i.e., $\frac{h^{2}}{\gamma _{1}}$ very
low, this means large diffusion coefficient, the contribution of this term
to the error is $F_{5}\frac{K^{2}(u)h^{2(k+1)}}{\gamma _{1}}$, so the error
is then $O(h^{2(k+1)}+\Delta t^{2q})$, which can be considered as optimal;
(ii) on the contrary, when $\frac{h^{2}}{\gamma _{1}}$ is very large or in
other words in the advection dominated regime, the contribution of this term
to the error is now either $F_{5}C_{J}\left( \frac{h^{k+1}}{\Delta t}\right)
^{2}$ or $\frac{F_{5}K^{2}(u)}{G_{q}(1+C_{J})}h^{2k}$, depending upon the
balance between the terms $\frac{1}{G_{q}(1+C_{J})}$ and $\frac{h^{2}C_{J}}{%
K^{2}(u)\Delta t^{2}}$, \ so that the error is now either $O\left( \left(
\frac{h^{k+1}}{\Delta t}\right) ^{2}+\Delta t^{2q}\right)$ if $\frac{1}{G_{q}(1+C_{J})}\geq \frac{h^{2}C_{J}}{
K^{2}(u)\Delta t^{2}}$ or $O(h^{2k}+\Delta t^{2q})$ if $\Delta t$
is sufficiently small such that $\frac{1}{G_{q}(1+C_{J})}< \frac{h^{2}C_{J}}{
K^{2}(u)\Delta t^{2}}$.
\end{remark}

\begin{proof}
Since, $c_{h}^{n}(x)=c^{n}(x)-(\rho ^{n}(x)+\theta _{h}^{n}(x))$ using (\ref%
{se5:3}) we can set that
\begin{equation*}
a\left( c_{h}^{n},v_{h}\right) =a\left( (c^{n}-(\rho ^{n}+\theta
_{h}^{n}),v_{h}\right) =a\left( (c^{n}-\theta _{h}^{n}),v_{h}\right) ,
\end{equation*}%
but by virtue of (\ref{s2:7}) and noting that $\widehat{c}%
_{h}^{n}(x)=c_{h}^{n}(x)$,\ $\widehat{c}%
_{h}^{n-i}(x)=c_{h}^{n-i}(X(x,t_{n};t_{n-i}))$ and the Jacobian matrix $F$
is the identity matrix when $t=t_{n}$, it readily follows that for $t=t_{n}$%
\begin{equation*}
a\left( c^{n},v_{h}\right) =-\left( \left. \frac{D(J\widehat{c})}{Dt}%
\right\vert _{t=t_{n}},v_{h}\right) _{\Omega }+\left( f^{n},v_{h}\right)
_{\Omega },
\end{equation*}%
so, we have that%
\begin{equation*}
a\left( c_{h}^{n},v_{h}\right) =-a\left( \theta _{h}^{n},v_{h}\right)
-\left( \left. \frac{D(J\widehat{c})}{Dt}\right\vert _{t=t_{n}},v_{h}\right)
_{\Omega }+\left( f^{n},v_{h}\right) _{\Omega }.
\end{equation*}%
Next, we note that in (\ref{se4:1}) $\widehat{c}_{h}^{n-i}$ can be \
decomposed as $\widehat{c}_{h}^{n-i}=\widehat{c}^{n-i}-\widehat{\rho }^{n-i}-%
\widehat{\theta }_{h}^{n-i}$, where $\widehat{\rho }^{n-i}:=\rho
^{n-i}(X^{n,n-i}(x))$ and $\widehat{\theta }_{h}^{n-i}:=\theta
_{h}^{n-i}(X^{n,n-i}(x))$. Then, using this decomposition and the
expression of $a\left( c_{h}^{n},v_{h}\right) $, we
obtain from (\ref{se4:1}) the following equation for $\widehat{\theta }%
_{h}^{n-i}$:%
\begin{equation}
\begin{array}{r}
\displaystyle\sum_{i=0}^{q}\alpha _{i}\left( J^{n,n-i}\widehat{\theta }%
_{h}^{n-i},v_{h}\right) _{\Omega }+\Delta ta\left( \theta
_{h}^{n},v_{h}\right) =-\displaystyle\underbrace{\sum_{i=0}^{q}\alpha
_{i}\left( J^{n,n-i}\widehat{\rho }^{n-i},v_{h}\right) _{\Omega }}_{\left(
B1\right) } \\
\\
-\Delta t\displaystyle\underbrace{\left( \left. \frac{D(J\widehat{c})}{Dt}%
\right\vert _{t=t_{n}}-\frac{1}{\Delta t}\sum_{i=0}^{q}\alpha _{i}J^{n,n-i}%
\widehat{c}^{n-i},v_{h}\right) _{\Omega }}_{(B2)} \\
\\
+\displaystyle\underbrace{\sum_{i=1}^{q}\alpha _{i}\left( J^{n,n-i}(\widehat{%
c}_{h}^{n-i}-\widetilde{c}_{h}^{n-i}),v_{h}\right) _{\Omega }}_{(B3)} \\
\\
+\displaystyle\underbrace{\sum_{i=1}^{q}\alpha _{i}\left( \left( \widetilde{J%
}^{n,n-i}-J^{n,n-i}\right) \widetilde{c}_{h}^{n-i},v_{h}\right) _{\Omega }}
_{(B3)}.%
\end{array}
\label{se5:8}
\end{equation}%
We proceed to calculate an estimate of $\theta _{h}^{n}$ in the $L^{2}$-norm
using the same procedure as in the stability analysis and letting $%
v_{h}=\theta _{h}^{n}-\eta _{q}\theta _{h}^{n-1}$. We bound the terms on the
left hand side first. Let $\widehat{\Theta }^{n}:=[J^{n,n}\theta
_{h}^{n}(x),\ldots ,J^{n,n-q+1}\widehat{\theta }_{h}^{n-q+1}]^{T}$, then as
in the stability proof and with $m=n-1$
\begin{equation*}
\sum_{i=0}^{q}\alpha _{i}\left( J^{n,n-i}\widehat{\theta }%
_{h}^{n-i},v_{h}\right) _{\Omega }=\left\Vert \widehat{\Theta }%
^{n}\right\Vert _{G}-\left\Vert \widehat{\Theta }^{m}\right\Vert
_{G}+\left\Vert \sum_{i=0}^{q}\alpha _{i}J^{n,n-(q-i)}\widehat{\theta }
_{h}^{n-(q-i)}\right\Vert _{L^{2}(\Omega )}^{2}.
\end{equation*}%
As for the term $\Delta ta\left( \theta _{h}^{n},v_{h}\right) $, we already
know that (see (\ref{se4:5}))%
\begin{equation*}
\begin{array}{r}
\Delta ta\left( \theta _{h}^{n},\theta _{h}^{n}-\eta _{q}\theta
_{h}^{n-1}\right) \geq \Delta t\alpha \left( \mu \left\Vert \nabla \theta
_{h}^{n}\right\Vert _{L^{2}(\Omega )}^{2}+a_{0}\left\Vert \theta
_{h}^{n}\right\Vert _{L^{2}(\Omega )}^{2}\right) \\
\\
+\Delta t\left( F_{1}^{n}-F_{1}^{n-1}\right) +\Delta t\left(
F_{2}^{n}-F_{2}^{n-1}\right) ,%
\end{array}%
\end{equation*}%
where%
\begin{equation*}
F_{1}^{n}=\frac{\mu \eta _{q}}{2}\left\Vert \nabla \theta
_{h}^{n}\right\Vert _{L^{2}(\Omega )}^{2}\ \ \mathrm{and\ \ }F_{2}^{n}=\frac{%
a_{0}\eta _{q}}{2}\left\Vert \theta _{h}^{n}\right\Vert _{L^{2}(\Omega
)}^{2}.
\end{equation*}%
So, recalling that $\gamma _{1}=\alpha \min (\mu ,a_{0})$, we can write (\ref%
{se5:8}) as%
\begin{equation}
\begin{array}{r}
\left\Vert \widehat{\Theta }^{n}\right\Vert _{G}-\left\Vert \widehat{\Theta }%
^{m}\right\Vert _{G}+\Delta t\gamma _{1}\left\Vert \theta
_{h}^{n}\right\Vert _{H^{1}(\Omega )}^{2}+\Delta t\left(
F_{1}^{n}-F_{1}^{n-1}\right) \\
\\
+\Delta t\left( F_{2}^{n}-F_{2}^{n-1}\right) \leq B1+B2+B3.%
\end{array}
\label{se5:9}
\end{equation}%
Now, we turn our attention to estimate the terms $B1$, $B2$ and $B3.$

\bigskip

\textbf{Term} $B1=-\sum_{i=0}^{q}\alpha _{i}\left( J^{n,n-i}\widehat{\rho }%
^{n-i},v_{h}\right) _{\Omega }$

\bigskip

In order to get an optimal error estimate in the diffusive regime is
convenient to further decompose $\widehat{\rho }^{n-i}(x)$ as $\widehat{\rho
}^{n-i}(x)=\rho ^{n-i}(x)+(\widehat{\rho }^{n-i}(x)-\rho ^{n-i}(x))$ so that%
\begin{equation*}
\begin{array}{r}
-\displaystyle\sum_{i=0}^{q}\alpha _{i}\left( J^{n,n-i}\widehat{\rho }%
^{n-i},v_{h}\right) _{\Omega }=-\displaystyle\sum_{i=0}^{q}\alpha _{i}\left(
J^{n,n-i}\rho ^{n-i},v_{h}\right) _{\Omega } \\
\\
+\displaystyle\sum_{i=0}^{q}\alpha _{i}\left( J^{n,n-i}(\rho ^{n-i}-\widehat{%
\rho }^{n-i}),v_{h}\right) _{\Omega }.%
\end{array}%
\end{equation*}%
Then, we have that%
\begin{equation*}
\begin{array}{r}
\displaystyle\left\vert -\sum_{i=0}^{q}\alpha _{i}\left( J^{n,n-i}\widehat{%
\rho }^{n-i},v_{h}\right) _{\Omega }\right\vert \leq \displaystyle\left\vert
\sum_{i=0}^{q}\alpha _{i}\left( J^{n,n-i}\rho ^{n-i},v_{h}\right) _{\Omega
}\right\vert \\
\\
+\displaystyle\left\vert \sum_{i=1}^{q}\alpha _{i}\left( J^{n,n-i}(\rho
^{n-i}-\widehat{\rho }^{n-i}),v_{h}\right) _{\Omega }\right\vert \equiv
(A1+A2).
\end{array}%
\end{equation*}%
Applying the Cauchy-Schwarz inequality yields%
\begin{equation*}
A1\leq \left\Vert \sum_{i=0}^{q}\alpha _{i}J^{n,n-i}\rho ^{n-i}\right\Vert
_{L^{2}(\Omega )}\left\Vert v_{h}\right\Vert _{L^{2}(\Omega )}.
\end{equation*}%
Since $\sum_{i=0}^{q}\alpha _{i}=0$, then there is a constant $K$ such that%
\begin{equation*}
\begin{array}{r}
\displaystyle\left\vert \sum_{i=0}^{q}\alpha _{i}J^{n,n-i}\rho
^{n-i}\right\vert =\displaystyle\left\vert \sum_{i=0}^{q}\alpha _{i}\left(
J^{n,n}\rho ^{n}-J^{n,n-i}\rho ^{n-i}\right) \right\vert \leq K\displaystyle%
\int_{t_{n-q}}^{t_{n}}\left\vert \frac{\partial J\rho }{\partial t}%
\right\vert dt \\
\\
\leq C\displaystyle\int_{t_{n-q}}^{t_{n}}\left\vert \rho +\frac{\partial
\rho }{\partial t}\right\vert dt\leq C\Delta t^{1/2}\left( \displaystyle%
\int_{t_{n-q}}^{t_{n}}\left\vert \rho +\frac{\partial \rho }{\partial t}%
\right\vert ^{2}\right) ^{1/2},%
\end{array}%
\end{equation*}%
where the constant $C$ depends on $K$, $C_{\mathrm{div}}$, and $C_{J}$,
that is,$\ C=C(K,C_{\mathrm{div}},C_{J})$. So,
\begin{equation*}
\left\Vert \sum_{i=0}^{q}\alpha _{i}J^{n,n-i}\rho ^{n-i}\right\Vert
_{L^{2}(\Omega )}\leq C\Delta t^{1/2}\left( \left\Vert \rho \right\Vert
_{L^{2}(t_{n-q},t_{n};L^{2}(\Omega ))}+\left\Vert \frac{\partial \rho }{%
\partial t}\right\Vert _{L^{2}(t_{n-q},t_{n};L^{2}(\Omega ))}\right).
\end{equation*}%
Now, applying the elementary inequality it follows that there is a constant $%
\varepsilon _{1}>0$ such that%
\begin{equation}
A1\leq \frac{C}{\varepsilon _{1}}\left( \left\Vert \rho \right\Vert
_{L^{2}(t_{n-q},t_{n};L^{2}(\Omega ))}^{2}+\left\Vert \frac{\partial \rho }{%
\partial t}\right\Vert _{L^{2}(t_{n-q},t_{n};L^{2}(\Omega ))}^{2}\right)
+\varepsilon _{1}\Delta t\left( \left\Vert \theta _{h}^{n}\right\Vert
^{2}+\left\Vert \theta _{h}^{n-1}\right\Vert ^{2}\right) .  \label{se5:11}
\end{equation}%
We can estimate the term $A2$ in two ways, each one being relevant at each one of the
two regimes the problem may experiment depending on the local P\'{e}clet
number $Pe=\frac{hK(u)}{\mu }$.%
\begin{equation*}
\left( A2\right) \leq \left\{
\begin{array}{l}
\sum_{i=1}^{q}\left\vert \alpha _{i}\right\vert \left\Vert J^{n,n-i}(\rho
^{n-i}-\widehat{\rho }^{n-i})\right\Vert _{-1}\left\Vert v_{h}\right\Vert
_{H^{1}(\Omega )}\ \equiv \left( A2\right) _{1}\ \mathrm{or} \\
\\
\sum_{i=1}^{q}\left\vert \alpha _{i}\right\vert \left\Vert J^{n,n-i}(\rho
^{n-i}-\widehat{\rho }^{n-i})\right\Vert _{L^{2}(\Omega )}\left\Vert
v_{h}\right\Vert _{L^{2}(\Omega )}\equiv \left( A2\right) _{2}.%
\end{array}%
\right.
\end{equation*}%
By virtue of (\ref{se5:5}) and the elementary inequality it follows that
there is a constant$\ \varepsilon _{2}=\gamma _{1}/2$ such that%
\begin{equation*}
\left( A2\right) _{1}\leq \Delta t\displaystyle\frac{2qC_{J}^{2}K^{2}(u)%
\sum_{i=1}^{q}i^{2}\left\vert \alpha _{i}\right\vert ^{2}\left\Vert \rho
^{n-i}\right\Vert _{L^{2}(\Omega )}^{2}}{\gamma _{1}}+\Delta t\displaystyle%
\frac{\gamma _{1}}{2}\left( \left\Vert \theta _{h}^{n}\right\Vert
_{H^{1}(\Omega )}^{2}+\left\Vert \theta _{h}^{n-1}\right\Vert _{H^{1}(\Omega
)}^{2}\right) .
\end{equation*}%
Since $\left\Vert \rho ^{n-i}\right\Vert _{L^{2}(\Omega )}\leq
C_{0}h^{k+1}\left\Vert c^{n-i}\right\Vert _{H^{k+1}(\Omega )}$, where the
constant $C_{0}=c_{1}\frac{\max (\mu .a_{0})}{\min (\mu .a_{0})}$, $c_{1}$
being the approximation constant in $V_{h}$, then we can set that%
\begin{equation*}
\left( A2\right) _{1}\leq \Delta t\displaystyle\frac{%
2qC_{0}C_{J}^{2}K^{2}(u)h^{2k+2}\sum_{i=1}^{q}i^{2}\left\vert \alpha
_{i}\right\vert ^{2}\left\Vert c^{n-i}\right\Vert _{H^{k+1}(\Omega )}^{2}}{%
\gamma _{1}}+\Delta t\displaystyle\frac{\gamma _{1}}{2}\left( \left\Vert
\theta _{h}^{n}\right\Vert _{H^{1}(\Omega )}^{2}+\left\Vert \theta
_{h}^{n-1}\right\Vert _{H^{1}(\Omega )}^{2}\right) .
\end{equation*}%
Similarly, we can estimate $\left( A2\right) _{2}$ by using (\ref{se5:6})
and the elementary inequality, so there is a constant $\varepsilon _{3}$
such that
\begin{equation*}
\begin{array}{r}
(A2)_{2}\leq \displaystyle\frac{q\Delta t}{2\varepsilon _{3}}\min \left(
C_{1}^{2}\sum_{i=1}^{q}\left\vert \alpha _{i}\right\vert ^{2}\left\Vert
\nabla \rho ^{n-i}\right\Vert _{L^{2}(\Omega
)^{d}}^{2},C_{2}\sum_{i=1}^{q}\left\vert \alpha _{i}\right\vert
^{2}\left\Vert \frac{\rho ^{n-i}}{\Delta t}\right\Vert _{L^{2}(\Omega
)}^{2}\right) \\
\\
+C_{1J}\varepsilon _{3}\Delta t\left( \left\Vert \theta _{h}^{n}\right\Vert
_{L^{2}(\Omega )}^{2}+\left\Vert \theta _{h}^{n-1}\right\Vert _{L^{2}(\Omega
)}^{2}\right) ,%
\end{array}%
\end{equation*}%
where $C_{1}^{2}=K^{2}(u)C_{J}^{2}/(1+C_{J})$ and $C_{2}=C_{1J}$. Since $%
\left\Vert \nabla \rho ^{n-i}\right\Vert _{L^{2}(\Omega )^{d}}\leq
C_{0}h^{k}\left\Vert c^{n-i}\right\Vert _{H^{k+1}(\Omega )}$, then we can
set that
\begin{equation*}
\begin{array}{r}
\left( A2\right) _{2}\leq \displaystyle\frac{C_{0}q\Delta t}{2\varepsilon
_{3}}\min \left( \frac{C_{1}^{2}\Delta t^{2}}{h^{2}},C_{2}\right) \left(
\frac{h^{k+1}}{\Delta t}\right) ^{2}\sum_{i=1}^{q}\left\vert \alpha
_{i}\right\vert ^{2}\left\Vert c^{n-i}\right\Vert _{H^{k+1}(\Omega )}^{2} \\
\\
+C_{1J}\varepsilon _{3}\Delta t\left( \left\Vert \theta _{h}^{n}\right\Vert
_{L^{2}(\Omega )}^{2}+\left\Vert \theta _{h}^{n-1}\right\Vert _{L^{2}(\Omega
)}^{2}\right) .%
\end{array}%
\end{equation*}%
Thus, letting $\varepsilon _{3}=1/8C_{1J}$, and using (\ref{se5:11}) we can
write the following two estimates for $B1$.
\begin{equation}
\begin{array}{l}
B1\leq \displaystyle\frac{C}{\varepsilon _{1}}\displaystyle\left( \left\Vert \rho
\right\Vert _{L^{2}(t_{n-q},t_{n};L^{2}(\Omega ))}^{2}+\left\Vert \frac{%
\partial \rho }{\partial t}\right\Vert _{L^{2}(t_{n-q},t_{n};L^{2}(\Omega
))}^{2}\right) \\
\\
+\Delta t\displaystyle\frac{4qC_{0}C_{J}^{2}K^{2}(u)h^{2(k+1)}%
\sum_{i=1}^{q}i^{2}\left\vert \alpha _{i}\right\vert ^{2}\left\Vert
c^{n-i}\right\Vert _{H^{k+1}(\Omega )}^{2}}{\gamma _{1}} \\
\\
+\Delta t\displaystyle\frac{\gamma _{1}}{2}\left( \left\Vert \theta
_{h}^{n}\right\Vert _{H^{1}(\Omega )}^{2}+\left\Vert \theta
_{h}^{n-1}\right\Vert _{H^{1}(\Omega )}^{2}\right) +\Delta t\varepsilon
_{1}\left( \left\Vert \theta _{h}^{n}\right\Vert _{L^{2}(\Omega
)}^{2}+\left\Vert \theta _{h}^{n-1}\right\Vert _{L^{2}(\Omega )}^{2}\right)%
\end{array}
\label{se5:11.1}
\end{equation}%
or
\begin{equation}
\begin{array}{l}
B1\leq \displaystyle\frac{C}{\varepsilon _{1}}\displaystyle\left( \left\Vert \rho
\right\Vert _{L^{2}(t_{n-q},t_{n};L^{2}(\Omega ))}^{2}+\left\Vert \frac{%
\partial \rho }{\partial t}\right\Vert _{L^{2}(t_{n-q},t_{n};L^{2}(\Omega
))}^{2}\right) \\
\\
+\displaystyle4C_{1J}C_{0}q\Delta t\min \left( \frac{C_{1}^{2}\Delta t^{2}}{%
h^{2}},C_{2}\right) \left( \frac{h^{k+1}}{\Delta t}\right)
^{2}\sum_{i=1}^{q}\left\vert \alpha _{i}\right\vert ^{2}\left\Vert
c^{n-i}\right\Vert _{H^{k+1}(\Omega )}^{2} \\
\\
+\Delta t\displaystyle\left( \frac{1}{8}+\varepsilon _{1}\right) \left(
\left\Vert \theta _{h}^{n}\right\Vert _{L^{2}(\Omega )}^{2}+\left\Vert
\theta _{h}^{n-1}\right\Vert _{L^{2}(\Omega )}^{2}\right).%
\end{array}
\label{se5:11.2}
\end{equation}

\bigskip

\textbf{Term} $B2=-\Delta t\displaystyle\left( \left. \frac{D(J\widehat{c})}{%
Dt}\right\vert _{t=t_{n}}-\frac{1}{\Delta t}\sum_{i=0}^{q}\alpha
_{i}J^{n,n-1}\widehat{c}^{n-i},v_{h}\right) _{\Omega }$

\bigskip

By the Cauchy-Schwarz and elementary inequalities it results that%
\begin{equation*}
B2\leq \frac{\Delta t}{2\varepsilon _{4}}\left\Vert \left. \frac{D(J\widehat{%
c})}{Dt}\right\vert _{t=t_{n}}-\frac{1}{\Delta t}\sum_{i=0}^{q}\alpha
_{i}J^{n,n-1}\widehat{c}^{n-i}\right\Vert _{L^{2}(\Omega )}^{2}+\Delta
t\varepsilon _{4}\left( \left\Vert \theta _{h}^{n}\right\Vert _{L^{2}(\Omega
)}^{2}+\left\Vert \theta _{h}^{n-1}\right\Vert _{L^{2}(\Omega )}^{2}\right) .
\end{equation*}%
To estimate the first term on the right hand side of this inequality
we apply the same arguments as in the proof of Theorem 10.1 of
Chapter 10 of \cite{Tho} to obtain that
\begin{equation*}
\Delta t\left\Vert \left. \frac{D(J\widehat{c})}{Dt}\right\vert _{t=t_{n}}-%
\frac{1}{\Delta t}\sum_{i=0}^{q}\alpha _{i}J^{n,n-i}\widehat{c}%
^{n-i}\right\Vert _{L^{2}(\Omega )}^{2}\leq C\Delta t^{2q}\left\Vert \frac{%
D^{q+1}\left( J\widehat{c}\right) }{Dt^{q+1}}\right\Vert
_{L^{2}(t_{n-q},t_{n};L^{2}(\Omega ))}^{2},
\end{equation*}%
so,%
\begin{equation}
B2\leq \frac{C\Delta t^{2q}}{\varepsilon _{4}}\left\Vert \frac{D^{q+1}\left(
J\widehat{c}\right) }{Dt^{q+1}}\right\Vert
_{L^{2}(t_{n-q},t_{n};L^{2}(\Omega ))}^{2}+\Delta t\varepsilon _{4}\left(
\left\Vert \theta _{h}^{n}\right\Vert _{L^{2}(\Omega )}^{2}+\left\Vert
\theta _{h}^{n-1}\right\Vert _{L^{2}(\Omega )}^{2}\right) .  \label{se5:12}
\end{equation}

\bigskip

\textbf{Term} $B3=\displaystyle\sum_{i=1}^{q}\alpha _{i}\left( J^{n,n-i}(%
\widehat{c}_{h}^{n-i}-\widetilde{c}_{h}^{n-i})+\left( \widetilde{J}%
^{n,n-i}-J^{n,n-i}\right) \widetilde{c}_{h}^{n-i},v_{h}\right) _{\Omega }$

\bigskip

To estimate this term we go back to (\ref{I3}) and (\ref{I4}) and therefore
we can set%
\begin{equation*}
B3\leq \Delta tC(u)h^{k+1}\sum_{i=1}^{q}\left\vert \alpha _{i}\right\vert
\left( \left\Vert \nabla c_{h}^{n-i}\right\Vert _{L^{2}(\Omega
)^{d}}+\left\Vert c_{h}^{n-i}\right\Vert _{L^{2}(\Omega )}\right) \left\Vert
v_{h}\right\Vert _{L^{2}(\Omega )}.
\end{equation*}%
Replacing $c_{h}^{n-i}$ by $c^{n-i}-\rho ^{n-i}-\theta _{h}^{n-i}$, noting
that $\left\Vert v_{h}\right\Vert _{L^{2}(\Omega )}\leq \left\Vert \theta
_{h}^{n}\right\Vert _{L^{2}(\Omega )}+\left\Vert \theta
_{h}^{n-1}\right\Vert _{L^{2}(\Omega )}$ and applying the elementary
inequality it follows that%
\begin{equation*}
\begin{array}{r}
B3\leq \Delta tC^{2}(u)h^{2(k+1)}\displaystyle\sum_{i=1}^{q}\left\vert
\alpha _{i}\right\vert \left( \left\Vert c^{n-i}\right\Vert _{H^{1}(\Omega
)}^{2}+\left\Vert \rho ^{n-i}\right\Vert _{H^{1}(\Omega )}^{2}+\left\Vert
\theta _{h}^{n-i}\right\Vert _{H^{1}(\Omega )}^{2}\right) \\
\\
+\Delta t\displaystyle\sum_{i=1}^{q}\left\vert \alpha _{i}\right\vert \left(
\left\Vert \theta _{h}^{n}\right\Vert _{L^{2}(\Omega )}^{2}+\left\Vert
\theta _{h}^{n-1}\right\Vert _{L^{2}(\Omega )}^{2}\right) .%
\end{array}%
\end{equation*}%
Making use of the inverse inequality $\left\Vert \theta
_{h}^{n-i}\right\Vert _{H^{1}(\Omega )}^{2}\leq ch^{-2}\left\Vert \theta
_{h}^{n-i}\right\Vert _{L^{2}(\Omega )}^{2}$ we can write that%
\begin{equation}
B3\leq \Delta tC^{2}(u)h^{2(k+1)}\sum_{i=1}^{q}\left\vert \alpha
_{i}\right\vert \left( \left\Vert c^{n-i}\right\Vert _{H^{1}(\Omega
)}^{2}+\left\Vert \rho ^{n-i}\right\Vert _{H^{1}(\Omega )}^{2}\right)
+\Delta t\sum_{i=0}^{q}\zeta _{i}\left\Vert \theta _{h}^{n-i}\right\Vert
_{L^{2}(\Omega )}^{2},  \label{se5:14}
\end{equation}%
Now, substituting the estimates (\ref{se5:11.1}),\ (\ref{se5:11.2}), (\ref%
{se5:12}) and (\ref{se5:14}) in (\ref{se5:9}) and setting $\varepsilon
_{1}=1/8$ and $\varepsilon _{4}=1/4$, it follows that%
\begin{equation}
\left\{
\begin{array}{l}
\left\Vert \widehat{\Theta }^{n}\right\Vert _{G}-\left\Vert \widehat{\Theta }%
^{m}\right\Vert _{G}+\Delta t\frac{\gamma _{1}}{2}\left( \left\Vert \theta
_{h}^{n}\right\Vert _{H^{1}(\Omega )}^{2}-\left\Vert \theta
_{h}^{n-1}\right\Vert _{H^{1}(\Omega )}^{2}\right) +\Delta t\left(
F_{1}^{n}-F_{1}^{n-1}\right) \\
\\
+\Delta t\left( F_{2}^{n}-F_{2}^{n-1}\right) \leq h^{2(k+1)}(K_{1}+\Delta
tK_{2})+\Delta t8qC_{0}\min \left( L_{1},L_{2},L_{3}\right) \left( \frac{
h^{k+1}}{\Delta t}\right) ^{2} \\
\\
+C\Delta t^{2q}\left\Vert \frac{D^{q+1}Jc}{Dt^{q+1}}\right\Vert
_{L^{2}(t_{n-q},t_{n};L^{2}(\Omega ))}^{2}+\Delta t\sum_{i=0}^{q}\zeta
_{i}^{\ast }\left\Vert \theta _{h}^{n-i}\right\Vert _{L^{2}(\Omega )}^{2},%
\end{array}%
\right.  \label{se5:15}
\end{equation}%
where $\zeta _{0}^{\ast }=\zeta _{0}+1/2$, $\zeta _{1}^{\ast }=\zeta
_{1}+1/2 $ and \ for $i\geq 2$, $\zeta _{i}^{\ast }=\zeta _{i}$,%
\begin{equation*}
\begin{array}{l}
L_{1}=\frac{C_{J}^{2}K^{2}(u)\Delta t^{2}}{\gamma _{1}}\sum_{i=1}^{q}i^{2}%
\left\vert \alpha _{i}\right\vert ^{2}\left\Vert c^{n-i}\right\Vert
_{H^{k+1}(\Omega )}^{2},\  \\
\\
\left( L_{2},L_{3}\right) =\left( \frac{C_{J}^{2}}{1+C_{J}}\frac{%
K^{2}(u)\Delta t^{2}}{h^{2}},C_{1J}\right) \sum_{i=1}^{q}\left\vert \alpha
_{i}\right\vert ^{2}\left\Vert c^{n-i}\right\Vert _{H^{k+1}(\Omega )}^{2},%
\end{array}%
\end{equation*}
and$\ $the constants%
\begin{equation*}
\begin{array}{l}
K_{1}=C\left( \left\Vert c\right\Vert _{L^{2}(t_{n-q},t_{n};H^{k+1}(\Omega
))}^{2}+\left\Vert \frac{\partial c}{\partial t}\right\Vert
_{L^{2}(t_{n-q},t_{n};H^{k+1}(\Omega ))}^{2}\right) , \\
\\
K_{2}=C\left( \left\Vert \nabla c^{n-i}\right\Vert _{L^{2}(\Omega
)^{d}}^{2}+\left\Vert \nabla \rho ^{n-i}\right\Vert _{L^{2}(\Omega
)^{d}}^{2}\right) .%
\end{array}%
\end{equation*}%
Summing on both sides of (\ref{se5:15}) from $n=q$ up to $n=N$ yields

\begin{equation*}
\left\{
\begin{array}{l}
\left\Vert \widehat{\Theta }_{h}^{N}\right\Vert _{G}-\left\Vert \widehat{%
\Theta }_{h}^{q-1}\right\Vert _{G}+\Delta t\frac{\gamma _{1}}{2}\left(
\left\Vert \theta _{h}^{N}\right\Vert _{H^{1}(\Omega )}^{2}-\left\Vert
\theta _{h}^{q-1}\right\Vert _{H^{1}(\Omega )}^{2}\right) +\Delta t\left(
F_{1}^{N}-F_{1}^{q-1}\right) \\
\\
+\Delta t\left( F_{2}^{N}-F_{2}^{q-1}\right) \leq h^{2(k+1)}\left( \overline{%
K}_{1}+\overline{K}_{2}\right) +C\Delta t^{2q}\left\Vert \frac{D^{q+1}Jc}{%
Dt^{q+1}}\right\Vert _{L^{2}(0,T;L^{2}(\Omega ))}^{2} \\
\\
+\overline{K}_{3}T\min \left( \frac{K^{2}(u)\Delta t^{2}}{\gamma _{1}},\frac{%
K^{2}(u)\Delta t^{2}}{G_{q}(1+C_{J})h^{2}},\frac{C_{1J}}{G_{q}(1+C_{J})}%
\right) \left( \frac{h^{k+1}}{\Delta t}\right) ^{2}+\Delta t\gamma
_{0}\sum_{n=0}^{N}\left\Vert \theta _{h}^{n}\right\Vert _{L^{2}(\Omega
)}^{2},%
\end{array}%
\right.
\end{equation*}%
where%
\begin{equation*}
\left\{
\begin{array}{l}
\overline{K}_{1}=C\left( \left\Vert c\right\Vert _{L^{2}(0,T;H^{k+1}(\Omega
))}^{2}+\left\Vert \frac{\partial c}{\partial t}\right\Vert
_{L^{2}(0,T;H^{k+1}(\Omega ))}^{2}\right) , \\
\\
\overline{K}_{2}=C\left( \left\Vert \nabla c\right\Vert
_{l^{2}(0,T;L^{2}(\Omega )^{d})}^{2}+h^{2k}\left\Vert c\right\Vert
_{l^{2}(0,T;H^{k+1}(\Omega ))}^{2}\right) , \\
\\
\overline{K}_{3}=8qC_{0}C_{J}^{2}\left\Vert c\right\Vert
_{l^{2}(0,T;H^{k+1}(\Omega ))}^{2}\sum_{i=1}^{q}i^{2}\alpha _{i}^{2},\ G_{q}=%
\displaystyle\frac{\sum_{i=1}^{q}i^{2}\alpha _{i}^{2}}{\sum_{i=1}^{q}\alpha
_{i}^{2}}.%
\end{array}%
\right.
\end{equation*}%
Next, as in the proof of the stability theorem, let%
\begin{equation*}
\left\Vert \widehat{\Theta }_{h}^{N}\right\Vert _{G}\geq \vartheta \left(
\left\Vert \theta _{h}^{N}\right\Vert _{L^{2}(\Omega )}^{2}+\ldots
+\left\Vert J^{N,N-q+1}\widehat{\theta }_{h}^{N-q+1}\right\Vert
_{L^{2}(\Omega )}^{2}\right)
\end{equation*}%
and%
\begin{equation*}
\left\Vert \widehat{\Theta }_{h}^{q-1}\right\Vert _{G}\leq \xi \left(
\left\Vert \theta _{h}^{0}\right\Vert _{L^{2}(\Omega )}^{2}+\ldots
+\left\Vert \theta _{h}^{q-1}\right\Vert _{L^{2}(\Omega )}^{2}\right) ,
\end{equation*}%
then it follows that%
\begin{equation}
\left\Vert \theta _{h}^{N}\right\Vert _{L^{2}(\Omega )}^{2}+\Delta t\beta
_{1}\left\Vert \theta _{h}^{N}\right\Vert _{H^{1}(\Omega )}^{2}\leq
R1+R2+R3+\Delta t\vartheta ^{-1}\lambda _{0}\sum_{n=0}^{N}\left\Vert \theta
_{h}^{n}\right\Vert _{L^{2}(\Omega )}^{2},  \label{se5:16}
\end{equation}%
where%
\begin{equation*}
\begin{array}{l}
R1=\vartheta ^{-1}\left( \xi \sum_{l=0}^{q-1}\left\Vert \theta
_{h}^{l}\right\Vert _{L^{2}(\Omega )}^{2}+\Delta t\frac{\eta _{q}\max (\mu
.a_{0})}{2}\left\Vert \theta _{h}^{q-1}\right\Vert _{H^{1}(\Omega
)}^{2}\right) , \\
\\
R2=\vartheta ^{-1}\left( \left( \overline{K}_{1}+\overline{K}_{2}\right)
h^{2(k+1)}+C\Delta t^{2q}\left\Vert \frac{D^{q+1}Jc}{Dt^{q+1}}\right\Vert
_{L^{2}(0,T;L^{2}(\Omega ))}^{2}\right), \\
\\
R3=\vartheta ^{-1}\overline{K}_{3}\min \left( \frac{K^{2}(u)\Delta t^{2}}{%
\gamma _{1}},\frac{K^{2}(u)\Delta t^{2}}{G_{q}(1+C_{J})h^{2}},\frac{C_{1J}}{%
(1+C_{J})}\right) \left( \frac{h^{k+1}}{\Delta t}\right) ^{2}.
\end{array}%
\end{equation*}%
Applying Gronwall inequality in (\ref{se5:16}) and noting that by virtue of
the triangle inequality, for all $n$ $\left\Vert e^{n}\right\Vert
_{L^{2}(\Omega )}\leq \left\Vert \rho ^{n}\right\Vert _{L^{2}(\Omega
)}+\left\Vert \theta _{h}^{n}\right\Vert _{L^{2}(\Omega )}$ and $\left\Vert
\theta _{h}^{n}\right\Vert _{H^{1}(\Omega )}\leq \left\Vert e^{n}\right\Vert
_{H^{1}(\Omega )}+\left\Vert \rho ^{n}\right\Vert _{H^{1}(\Omega )}$, the
result (\ref{se5:7.1}) follows.
\end{proof}

\section{Numerical experiments}

\label{sec:numerical-experiments}

Next, we perform some numerical experiments to study the error
behavior of the NCLG methods and compare it with that of the conventional
non conservative LG methods; also, we study the mass conservation error of
both methods. For this purpose, we define the normalized $L^{2}$ and mass errors at the final time $%
T$ as
\begin{equation*}
e_{L^{2}}:=\left[ \dfrac{\int_{\Omega }\left[ c_{h}(x,T)-c(x,T)\right]
^{2}d\Omega }{\int_{\Omega }c^{2}(x,\,T)d\Omega }\right]
^{1/2},\quad e_{m}:=\dfrac{\left\vert \int_{\Omega}\left[
c_{h}(x,T)-c(x,T)\right] d\Omega \right\vert }{\int_{\Omega}c(x,T)d\Omega}.
\end{equation*}

The test was originally proposed by Rui and Tabata \cite{RT2} to assess the
performance of their first-order conservative Lagrange-Galerkin method. It
consists of an advection-diffusion problem posed on $Q_{T}:=\Omega \times
(0,T)=(-1,1)^{2}\times (0,0.5)$ with the diffusion coefficient $\mu =0.01$, $a_{0}=0$
and the vector velocity
\begin{equation*}
u(x,t)=\left[ 1+\sin \left( t-x_{1}\right) ,1+\sin \left( t-x_{2}\right) %
\right] ^{\text{T}}.
\end{equation*}%
The boundary conditions are homogeneous Neumann conditions on the whole
boundary. The analytical solution of this problem is%
\begin{equation*}
c(x,t)=\exp \left( -\dfrac{2-\cos \left( t-x_{1}\right) -\cos \left(
t-x_{2}\right) }{\mu }\right) ,\ 0\leq t\leq T.
\end{equation*}%
We show in Fig. \ref{fig:TestII-solution} some snapshots of the numerical
solution calculated with $h=0.052$, $\Delta t=0.01$, $q=4$ and $k=5$.
Despite there is a small flux through the boundary, the mass can be
considered approximately constant along time.


\begin{figure}[th!]
\begin{center}
\includegraphics[width=0.21\linewidth, keepaspectratio]{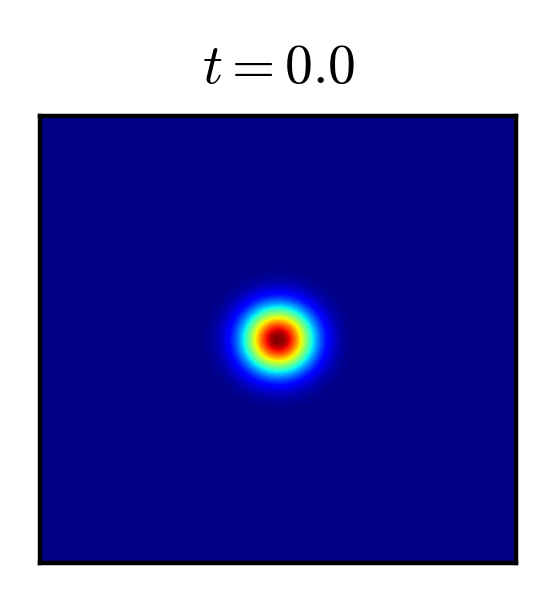}
\includegraphics[width=0.21\linewidth, keepaspectratio]{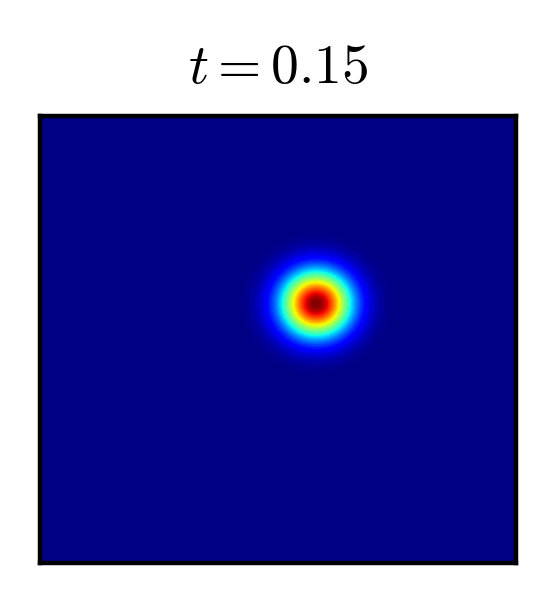} %
\includegraphics[width=0.21\linewidth, keepaspectratio]{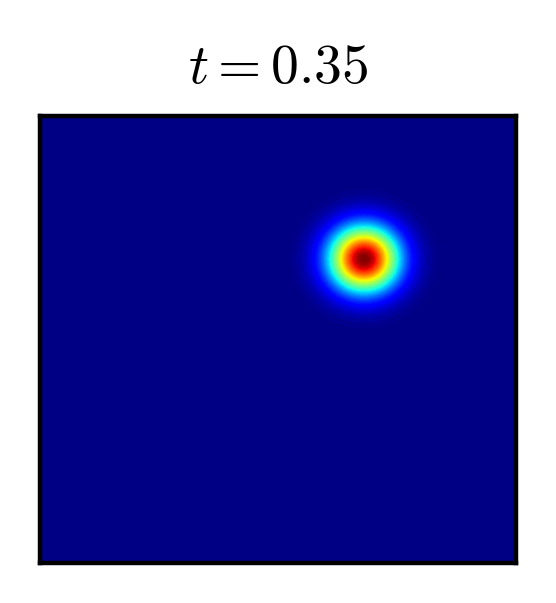} %
\includegraphics[width=0.21\linewidth, keepaspectratio]{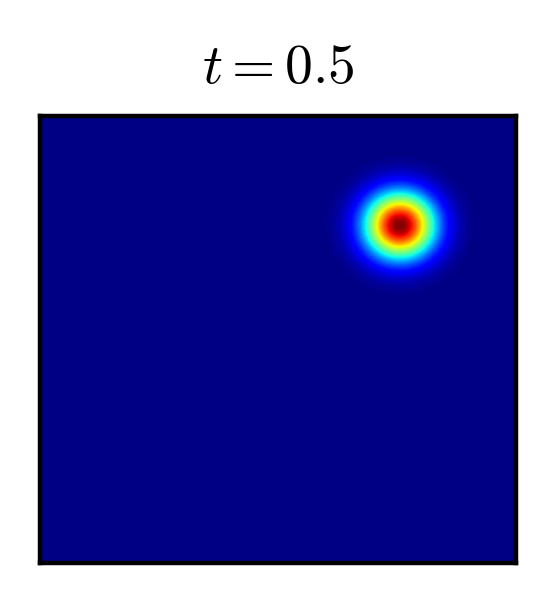} %
\includegraphics[width=0.10\linewidth, keepaspectratio]{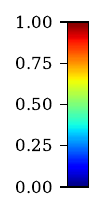}
\end{center}
\caption{Numerical solution.} \label{fig:TestII-solution}
\end{figure}

The dependence of the errors on the time step $\Delta t$ for a fixed
$h$ is illustrated in Fig. \ref{fig:TestII-Deltat}. We observe that when $\Delta t$ is sufficiently small so that the assumption \textbf{A5} of Theorem \ref{error} is satisfied, the $L^{2}$
error behaves according to Remark \ref{regimes} as $O(h^{k}+\Delta t^{q})$; so, the dominant part of the
error for $q=1,2,3,4$ is the term $\Delta t^{q}$ because $k=5$; however, there are threshold values $\Delta t_{q}$ such that for $\Delta t \leq \Delta t_{q}$ the error is $0(h^{k})$, for instance, $\Delta t_{5}\simeq 2\cdot 10^{-2}$ and $\Delta t_{4}\simeq 10^{-2}$. The mass conservation errors behave approximately as $O(\Delta t^{q})$ for the non-conservative methods, which are significantly larger than those of  the quasi-conservative methods when $1 \leq q \leq 4$. In the latter methods and for $k \geq 3$ the mass conservation errors are approximately $O(h^{k})$; however, for $k=1$ and $2$ this rate is reached when $\Delta t$ is sufficiently small, say $\Delta t\simeq 10^{-2}$.

\begin{figure}[th!]
\begin{center}
\includegraphics[width=0.49\linewidth, keepaspectratio]{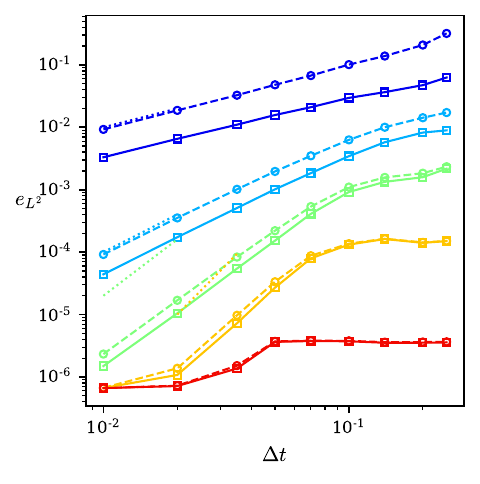}
\includegraphics[width=0.49\linewidth, keepaspectratio]{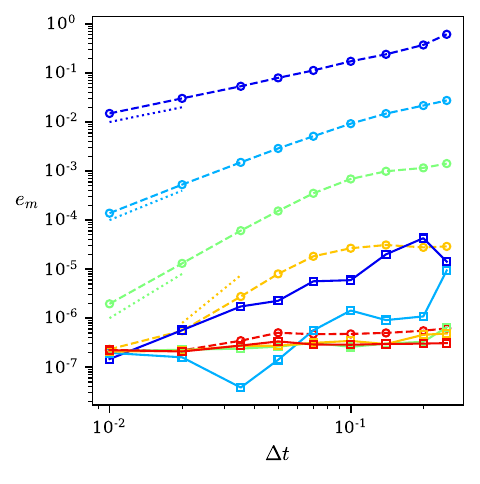} 
\caption[]{$L^2$ and mass errors with respect to $\Delta
t$ for $k=5$ and $h=0.052$. Legend: (\colorline{blue}) $q=1$,
(\colorline{medblue}) $q=2$, (\colorline{green}) $q=3$,
(\colorline{orange}) $q=4$, (\colorline{red}) $q=5$,
(\sqsolid{black}) present method, (\circdashed{black})
non-conservative scheme.} \label{fig:TestII-Deltat}
\end{center}
\end{figure}

Fig. \ref{fig:TestII-h} illustrates the behavior of the errors with respect
to the mesh size $h$ for a fixed time step $\Delta t$. In these examples the $L^{2}$
errors are almost coincident both for the conservative and the
non-conservative methods. The $L^{2}$ error is now $O(h^{k+1}+\Delta t^{q})
$; hence, for $h$ sufficiently small and $k \geq 4$ the dominant part of the error is $
O(\Delta t^{q})$, this means that the error due to the time discretization
becomes dominant. The mass conservation
errors decrease when $h$ decreases, but with a somewhat oscillating behavior for $k=1$ and $2$ due perhaps to the fact that either $h$ or $\Delta t$ are not sufficiently small; however, for $k\geq 3$ and $h$ sufficiently small the error is $0(\Delta t^{q})$l as expected.

\begin{figure}[th!]
\centering
\includegraphics[width=0.49\linewidth, keepaspectratio]{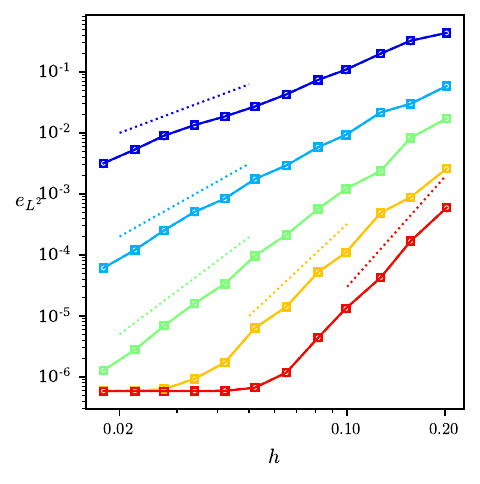}
\includegraphics[width=0.49\linewidth, keepaspectratio]{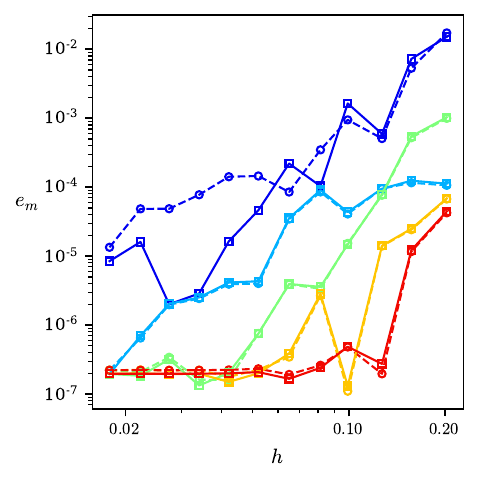}
\caption[]{$L^2$ and mass errors with respect to $h$ for
$q=4$ and $\Delta t=0.01$. Legend: (\colorline{blue}) $k=1$,
(\colorline{medblue}) $k=2$, (\colorline{green}) $k=3$,
(\colorline{orange}) $k=4$, (\colorline{red}) $k=5$,
(\sqsolid{black}) present method, (\circdashed{black})
non-conservative scheme.} \label{fig:TestII-h}
\end{figure}

Finally, we have repeated the experiments of Rui and Tabata \cite{RT2} in
order to check the performance of our method versus the method proposed by
these authors, which is based on the formulation (\ref{s2:13}) with $q=1$.
In their experiments, the mesh is constructed by dividing each edge of the
square domain into $N$ uniform segments, and $\Delta t=2/N$, for several
values of $N$. The initial condition is taken as $c_{h}^{0}=L_{h}c^{0}$,
with $L_{h}$ the Lagrange interpolation operator onto the finite element
space $V_{h}$. Then, we measure the relative errors by the formulas%
\begin{equation*}
\widehat{e}_{L^{2}}=\displaystyle\dfrac{\max_{n}\sqrt{\int_{\Omega }\left(
c_{h}^{n}-L_{h}c^{n}\right) ^{2}d\Omega }}{\max_{n}\sqrt{\int_{\Omega
}\left( L_{h}c^{n}\right) ^{2}d\Omega }},\quad \widehat{e}_{m}=\displaystyle%
\dfrac{\left\vert \int_{\Omega }\left( c_{h}^{N_{T}}-L_{h}c^{N_{T}}\right)
d\Omega \right\vert }{\left\vert \int_{\Omega }L_{h}c^{N_{T}}d\Omega
\right\vert },
\end{equation*}%
where $N_{T}=\lfloor T/\Delta t\rfloor $ denotes the last simulated time
before $T$. Note that the authors in \cite{RT2} employ a first-order-in-time
method with linear elements. Since $\Delta t\sim h$ in these experiments,
the $L^{2}$ error must converge as $O\left( \Delta t^{\min \{k,q\}}\right) $.

As seen in Fig. \ref{fig:TestII-RT}, the $L^{2}$ errors of our method are
slightly larger than those of \cite{RT2} for the case $k=q=1$. However, the
former allows to consider higher-order discretizations that provide clearly
better results. The $L^{2}$ error converges with $\Delta t$ as expected for $%
k=q\leq 4$; for $k=q=5$, the convergence is more irregular, although the
errors are still smaller. The mass conservation errors are also smaller in
our method.

\begin{figure}[th!]
\centering
\includegraphics[width=0.49\linewidth, keepaspectratio]{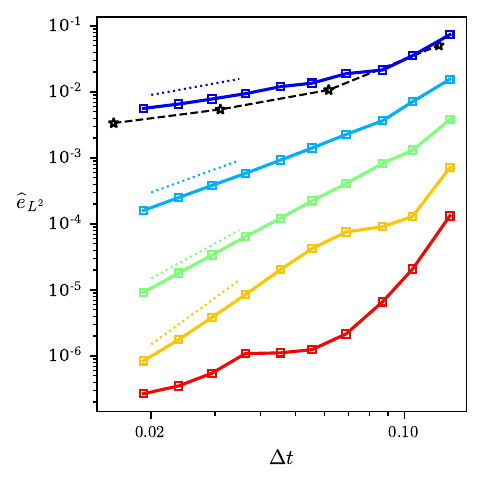}
\includegraphics[width=0.49\linewidth, keepaspectratio]{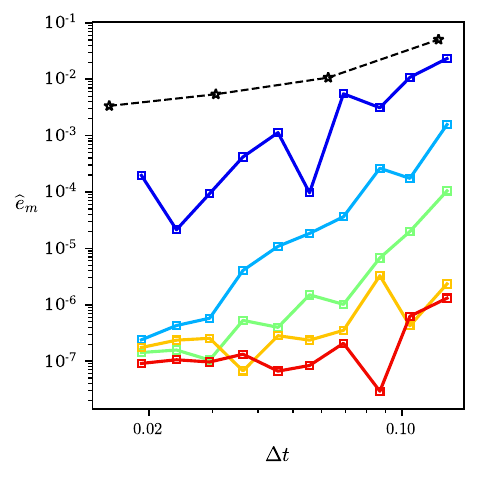}
\caption[]{Relative $L^2$ and mass errors for the numerical experiments in \cite{RT2}. Legend:
(\colorline{blue}) $k=q=1$, (\colorline{medblue}) $k=q=2$, (\colorline{green}) $k=q=3$, (\colorline{orange}) $k=q=4$, (\colorline{red}) $k=q=5$.,
($\vcenter{\hbox{\tikz\draw[thick,dashed,black] (0,0)--(0.5,0) node[midway]{$\medstar$};}}$) Rui-Tabata method \cite{RT2},
(\sqsolid{black}) present method.}
\label{fig:TestII-RT}
\end{figure}

\section*{Acknowledgements}

This research has been partially funded by grant PGC-2018-097565-B100 of
Ministerio de Ciencia, Innovaci\'{o}n y Universidades of Spain and of the
European Regional Development Fund.

\begin{center}
\pagebreak {\LARGE Appendix}
\end{center}

\textbf{Finite element discretizations and }$d$\textbf{-curved simplices}

\bigskip

The NCLG methods virtually move the elements of the fixed mesh backward in
time thus generating elements with curved $d-$faces. Since we define finite
element spaces associated with the meshes formed of such curve elements,
then for the sake of completeness of the paper, we collect in this appendix,
following the theory on curved elements of Bernardi \cite{Bern}, some
results which are relevant for us.

The triple $(T,P(T),\Sigma _{T})$ is a finite element \cite[Chapter 2.3]{Ci}%
, if: (i) $T$ is the closure of an open domain with Lipschitz piecewise
smooth boundary,\ (ii) $P(T)$ is a finite dimensional space of real-valued
functions defined over $T$, and (iii) $\Sigma _{T}$ is a set of continuous
linear forms defined on $P(T)$ with support on $T$, and such that $\Sigma
_{T}$ is $P(T)-$unisolvent, i.e., for any $\phi \in \Sigma _{T}$ there is
one and only one $v\in P_{T}$ such that $\phi (v)=1$ and for any other $\phi
^{\prime }\in \Sigma _{T}$,\ $\phi ^{\prime }\neq \phi $, $\phi ^{\prime
}(v)=0$. Moreover we shall also consider the reference finite element $(%
\widehat{T},\widehat{P}(\widehat{T}),\widehat{\Sigma }_{\widehat{T}})$ where
$\widehat{T}$ is the unit $d-$simplex, $\widehat{P}(\widehat{T})$ is the set
of polynomials of degree $\leq k,\ k$ $\geq 0$ being an integer, and $%
\widehat{\Sigma }_{\widehat{T}}$ is a set of continuous linear forms with
support on $\widehat{T}.$

\begin{definition}
\label{definition1} (\cite{Bern}, Definition 2.1) The $d-$simplex $T$ is
said to be a curved $d-$simplex\ if there exists a $C^{1}$ mapping $F_{T}:%
\widehat{T}\rightarrow T$ such that

\begin{equation}
F_{T}=\overline{F}_{T}+\Theta _{T},  \label{curvemapping}
\end{equation}%
where
\begin{equation*}
\begin{array}{c}
\overline{F}_{T}:\widehat{T}\rightarrow \mathbb{R}^{d} \\
\widehat{x}\rightarrow \overline{B}_{T}\widehat{x}+\overline{b}_{T},\  \\
\overline{B}_{T}\in \mathcal{L}(\mathbb{R}^{d})\ \ \ \text{ and }\ \ \
\overline{b}_{T}\in \mathbb{R}^{d},%
\end{array}%
\end{equation*}%
is an invertible affine mapping and $\Theta _{T}:\widehat{T}\rightarrow
\mathbb{R}^{d}$\ is a $C^{1}$\ mapping which satisfies
\begin{equation}
c_{T}=\sup_{\widehat{x}\in \widehat{T}}\parallel D\Theta _{T}(\widehat{x})%
\overline{B}_{T}^{-1}\parallel <1,  \label{curvemapping2}
\end{equation}%
where $\left\Vert \cdot \right\Vert $ denotes the two-norm of the matrix.
\end{definition}

\bigskip

Note that by the definition of $F_{T}$, a curved element $T$ can be
considered as a perturbation to the element $\overline{T}=\overline{F}_{T}(%
\widehat{T})$.\ As usual, $h_{T}$, $h_{\overline{T}}$ and $\widehat{h}$ will
denote $diam(T),\;diam(\overline{T})$ and $diam(\widehat{T})$ respectively.
Moreover the definition allows the following result.

\begin{lemma}
\label{lemma1_apen} (\cite{Bern}, Lemma 2.1) The mapping $F_{T}$ is a $C^{1}$%
-diffeomorphism from $\widehat{T}$ onto $T$ that satisfies%
\begin{equation}
\sup_{\widehat{x}\in \widehat{T}}\left\Vert DF_{T}(\widehat{x})\right\Vert
\leq (1+c_{T})\left\Vert \overline{B}_{T}\right\Vert ,  \label{ft1}
\end{equation}

\begin{equation}
\sup_{x\in T}\left\Vert DF_{T}^{-1}(x)\right\Vert \leq
(1-c_{T})^{-1}\left\Vert \overline{B}_{T}^{-1}\right\Vert ,  \label{ft2}
\end{equation}

\begin{equation}
\forall \widehat{x}\in \widehat{T},\ \ (1-c_{T})^{d}\left\vert \det
\overline{B}_{T}\right\vert \leq \left\Vert \det DF_{T}(\widehat{x}%
)\right\Vert \leq (1+c_{T})^{d}\left\vert \det \overline{B}_{T}\right\vert .
\label{ft3}
\end{equation}%
Furthermore. let $\widehat{\varrho }=$sup$\{$diam$(\widehat{S})$,$\ \widehat{%
S}$ a ball contained in$\ \widehat{T}\}$ and $\rho _{\overline{T}}$ the
diameter of the maximum inscribed ball in $\overline{T}$, since [\cite{Ci}, Thm.
3.1.3]%
\begin{equation}
\left\Vert \overline{B}_{T}\right\Vert \leq \frac{h_{\overline{T}}}{\widehat{%
\varrho }},\ \ \left\Vert \overline{B}_{T}^{-1}\right\Vert \leq \frac{%
\widehat{h}}{\rho _{\overline{T}}}\ \ \mathrm{and}\ \ \left\vert \det
\overline{B}_{T}\right\vert =\frac{meas(\overline{T})}{meas(\widehat{T})},
\label{ft4}
\end{equation}%
it readily follows from (\ref{ft3}) that%
\begin{equation}
\sup_{\widehat{x}\in \widehat{T}}\left\vert \det DF_{T}(\widehat{x}%
)\right\vert \leq \frac{1}{meas(\widehat{T})}\left( \frac{1+c_{T}}{1-c_{T}}%
\right) ^{d}meas\left( T\right) .  \label{ft5}
\end{equation}%
Also, there exist positive constants $c_{1}$ and $c_{2}$ depending on the $%
dx-$measure of the unit ball in $\mathbb{R}^{d}$ and the constant $C_{T}$ such that%
\begin{equation*}
c_{1}\rho _{\overline{T}}^{d}\leq meas(T)\leq c_{2}h_{\overline{T}}^{d}\text{%
.}
\end{equation*}
\end{lemma}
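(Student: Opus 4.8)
The plan is to follow Bernardi's argument \cite{Bern}, whose engine is the single factorisation
\begin{equation*}
DF_{T}(\widehat{x})=\overline{B}_{T}+D\Theta _{T}(\widehat{x})=\left( I+D\Theta _{T}(\widehat{x})\overline{B}_{T}^{-1}\right) \overline{B}_{T}
\end{equation*}
combined with the smallness hypothesis (\ref{curvemapping2}), $\left\Vert D\Theta _{T}(\widehat{x})\overline{B}_{T}^{-1}\right\Vert \leq c_{T}<1$ for every $\widehat{x}\in \widehat{T}$. The map $F_{T}$ is already $C^{1}$ by Definition \ref{definition1}, so what must be established is that it is a bijection onto $T$ with a $C^{1}$ inverse, together with the quantitative bounds. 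First I would read off (\ref{ft1}) from the factorisation by submultiplicativity and the triangle inequality: $\left\Vert DF_{T}(\widehat{x})\right\Vert \leq \left\Vert I+D\Theta _{T}(\widehat{x})\overline{B}_{T}^{-1}\right\Vert \left\Vert \overline{B}_{T}\right\Vert \leq (1+c_{T})\left\Vert \overline{B}_{T}\right\Vert $. Since $\left\Vert D\Theta _{T}(\widehat{x})\overline{B}_{T}^{-1}\right\Vert <1$, a Neumann-series argument shows that $I+D\Theta _{T}(\widehat{x})\overline{B}_{T}^{-1}$ is invertible with $\left\Vert (I+D\Theta _{T}(\widehat{x})\overline{B}_{T}^{-1})^{-1}\right\Vert \leq (1-c_{T})^{-1}$; hence $DF_{T}(\widehat{x})$ is invertible for every $\widehat{x}$, and from $DF_{T}^{-1}=\overline{B}_{T}^{-1}(I+D\Theta _{T}\overline{B}_{T}^{-1})^{-1}$ together with the fact that the differential of $F_{T}^{-1}$ is the inverse of the differential of $F_{T}$ one obtains (\ref{ft2}).

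Next I would upgrade the pointwise invertibility of $DF_{T}$ to \emph{global} injectivity of $F_{T}$ on the convex set $\widehat{T}$; this is the one genuinely non-local step, since invertibility of the Jacobian only yields a local diffeomorphism and one must still rule out overlaps of the image. For $\widehat{x}\neq \widehat{y}$ in $\widehat{T}$, integrate along the segment joining them, $F_{T}(\widehat{x})-F_{T}(\widehat{y})=\int_{0}^{1}DF_{T}(\widehat{y}+s(\widehat{x}-\widehat{y}))(\widehat{x}-\widehat{y})\,ds$; setting $z:=\overline{B}_{T}(\widehat{x}-\widehat{y})\neq 0$ and factoring out $\overline{B}_{T}$ gives $F_{T}(\widehat{x})-F_{T}(\widehat{y})=z+\int_{0}^{1}D\Theta _{T}(\widehat{y}+s(\widehat{x}-\widehat{y}))\overline{B}_{T}^{-1}z\,ds$, so that $\left\vert F_{T}(\widehat{x})-F_{T}(\widehat{y})\right\vert \geq (1-c_{T})\left\vert z\right\vert >0$. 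Thus $F_{T}$ is a continuous injection of the compact set $\widehat{T}$ onto $T=F_{T}(\widehat{T})$, hence a homeomorphism, and the inverse function theorem applied at each point upgrades this to a $C^{1}$-diffeomorphism.

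For the Jacobian estimates (\ref{ft3}) I would use $\det DF_{T}(\widehat{x})=\det \left( I+D\Theta _{T}(\widehat{x})\overline{B}_{T}^{-1}\right) \det \overline{B}_{T}$ and note that each eigenvalue $\mu $ of $D\Theta _{T}(\widehat{x})\overline{B}_{T}^{-1}$ obeys $\left\vert \mu \right\vert \leq c_{T}$ (the modulus of an eigenvalue being bounded by the operator two-norm); hence the eigenvalues of $I+D\Theta _{T}(\widehat{x})\overline{B}_{T}^{-1}$ have modulus in $[1-c_{T},1+c_{T}]$, and taking the product of the $d$ of them gives $(1-c_{T})^{d}\leq \left\vert \det (I+D\Theta _{T}(\widehat{x})\overline{B}_{T}^{-1})\right\vert \leq (1+c_{T})^{d}$, whence (\ref{ft3}) after multiplying by $\left\vert \det \overline{B}_{T}\right\vert $. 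Finally, the remaining assertions are routine bookkeeping: (\ref{ft4}) is Ciarlet's standard estimate for the affine map $\overline{F}_{T}$ \cite{Ci}; (\ref{ft5}) follows by combining the change of variables identity $meas(T)=\int_{\widehat{T}}\left\vert \det DF_{T}(\widehat{x})\right\vert \,d\widehat{x}$ with the lower bound in (\ref{ft3}), which yields $\left\vert \det \overline{B}_{T}\right\vert \leq meas(T)\left( (1-c_{T})^{d}meas(\widehat{T})\right) ^{-1}$, and then the upper bound in (\ref{ft3}); and the two-sided estimate $c_{1}\rho _{\overline{T}}^{d}\leq meas(T)\leq c_{2}h_{\overline{T}}^{d}$ comes from the same inequalities together with $meas(\overline{T})=\left\vert \det \overline{B}_{T}\right\vert meas(\widehat{T})$ and the inclusion of a ball of diameter $\rho _{\overline{T}}$ in $\overline{T}$ and of $\overline{T}$ in a ball of diameter $h_{\overline{T}}$, the constants $c_{1},c_{2}$ depending only on $d$, the volume of the unit ball, and $c_{T}$.
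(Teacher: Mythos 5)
The paper gives no proof of this lemma: it is quoted verbatim from Bernardi (\cite{Bern}, Lemma 2.1), with (\ref{ft4}) attributed to Ciarlet, so there is no in-paper argument to compare against. Your reconstruction is correct and is exactly the standard one — the factorisation $DF_{T}=(I+D\Theta _{T}\overline{B}_{T}^{-1})\overline{B}_{T}$ with the Neumann-series bound for (\ref{ft1})--(\ref{ft2}), the line-integral estimate on the convex set $\widehat{T}$ for global injectivity (the one step that genuinely needs an argument beyond pointwise invertibility, and which you handle correctly), the eigenvalue-modulus bound $|1+\mu |\in \lbrack 1-c_{T},1+c_{T}]$ for (\ref{ft3}) (valid since the spectral radius is dominated by the operator norm and the determinant is the product of the possibly complex eigenvalues), and the change-of-variables bookkeeping for (\ref{ft5}) and the measure bounds.
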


\begin{definition}
\label{definition1.5}(\cite{Bern}, Definition 2.2) A curved $d-$simplex is
of class $C^{m}$, $m\geq 1$, if the mapping $F_{T}$ is of class $C^{m}$ on $%
\widehat{T}$.
\end{definition}

For curved $d-$simplices of class $C^{m}$ there are constants
\begin{equation}
c_{l}(T)=\sup_{\widehat{x}\in \widehat{T}}\left\Vert D^{l}F_{T}(\widehat{x}%
)\right\Vert \cdot \left\Vert \overline{B}_{T}\right\Vert ^{-l},\ \ 2\leq
l\leq m,  \label{ft6}
\end{equation}%
and constants $c_{-l}(T)$, which depend continuously on $c_{T},\
c_{2}(T),\ldots ,c_{m}(T)$ such that%
\begin{equation}
\sup_{x\in T}\left\Vert D^{l}F_{T}^{-1}(x)\right\Vert \leq c_{-l}\left\Vert
\overline{B}_{T}\right\Vert ^{2(l-1)}\left\Vert \overline{B}%
_{T}^{-1}\right\Vert ^{l}.  \label{ft7}
\end{equation}%
We have the following lemma.

\begin{lemma}
\label{lemma7} (\cite{Bern}, Lemma 2.3) Assume that $T$ is a curved element
of class $m$ and let $k$ be an integer, $0\leq k\leq m$, and $p\in \lbrack
1,\infty ]$. A function $v$ is in $W^{k,p}(T)$ if and only if $\widehat{v}%
=v\circ F_{T}\in W^{k,p}(\widehat{T})$. Moreover, the following inequalities
hold.
\begin{equation}
\left\{
\begin{array}{l}
\mid v\mid _{W^{k,p}(T)}\leq C\mid \det \overline{B}_{T}\mid ^{1/p}\left(
\sum_{r=0}^{k}\parallel \overline{B}_{T}^{-1}\parallel ^{r}\mid \widehat{v}%
\mid _{W^{r,p}(\widehat{T})}\right) , \\
\\
\mid \widehat{v}\mid _{W^{k,p}(\widehat{T})}\leq C\mid \det \overline{B}%
_{T}\mid ^{-1/p}\parallel \overline{B}_{T}\parallel ^{k}\left(
\sum_{r=0}^{k}\mid v\mid _{W^{r,p}(T)}\right) .%
\end{array}%
\right.  \label{approx1}
\end{equation}%
where the constants $C$ depend continuously on $c_{T}$ (see \ref%
{curvemapping2}) and the constants $c_{l}.$
\end{lemma}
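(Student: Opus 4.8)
The plan is to generalize Ciarlet's affine-equivalence estimate (\cite{Ci}, Theorem 3.1.2) to the nonlinear map $F_T$, the only genuinely new feature being that $F_T$ is a $C^m$-diffeomorphism rather than an affine map, so that its second and higher derivatives no longer vanish. First I would dispose of the membership assertion: since $0\le k\le m$ and, by Lemma \ref{lemma1_apen} together with Definition \ref{definition1.5}, $F_T$ is a $C^m$-diffeomorphism between the bounded Lipschitz domains $\widehat T$ and $T$ with $F_T$ and $F_T^{-1}$ having bounded derivatives up to order $m$, composition with $F_T$ (resp. $F_T^{-1}$) maps $W^{k,p}$ into $W^{k,p}$; this is the invariance of Sobolev spaces under $C^{k}$-diffeomorphisms (Theorem 1.1.7/1 of \cite{Maz}), and in fact the quantitative bounds below re-prove it. It therefore suffices to establish the two seminorm inequalities.

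For the first inequality I would write $v=\widehat v\circ F_T^{-1}$ and expand $D^{k}v$ by the multivariate chain rule (Fa\`a di Bruno formula): $D^{k}(\widehat v\circ F_T^{-1})$ is a finite sum of terms of the form $\big(D^{r}\widehat v\circ F_T^{-1}\big)\,\prod_{i=1}^{r}D^{l_i}F_T^{-1}$, with $1\le r\le k$, $l_i\ge 1$ and $\sum_i l_i=k$, the combinatorial coefficients depending only on $k$ and $d$. I would then insert the pointwise bounds (\ref{ft2}) for the factors with $l_i=1$ and (\ref{ft7}) for those with $l_i\ge 2$, so that each product is controlled by a power of $\parallel\overline B_T^{-1}\parallel$ times a power of $\parallel\overline B_T\parallel$, with constant depending continuously on $c_T,c_2(T),\dots,c_m(T)$ through the $c_{-l}$. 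Finally I would pass to the seminorm by the change of variables $x=F_T(\widehat x)$, $\mid dx\mid=\mid\det DF_T(\widehat x)\mid\,d\widehat x$, bound the Jacobian above by $(1+c_T)^{d}\mid\det\overline B_T\mid$ via (\ref{ft3}), and apply the triangle inequality over the finitely many terms; extracting the $p$-th root yields the factor $\mid\det\overline B_T\mid^{1/p}$ and a sum over $r$ whose $r$-th summand carries $\mid\widehat v\mid_{W^{r,p}(\widehat T)}$.

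The second inequality is the mirror image: I would write $\widehat v=v\circ F_T$, expand $D^{k}\widehat v$ by the same chain rule using derivatives $D^{l}F_T$ bounded by (\ref{ft1}) and (\ref{ft6}), which carry positive powers of $\parallel\overline B_T\parallel$, change variables back by $\widehat x=F_T^{-1}(x)$, and use the lower Jacobian bound in (\ref{ft3}) to produce the factor $\mid\det\overline B_T\mid^{-1/p}$; the same bookkeeping then collapses the expansion to the stated $\parallel\overline B_T\parallel^{k}\sum_{r=0}^{k}\mid v\mid_{W^{r,p}(T)}$.

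The main obstacle will be the final bookkeeping step in each inequality. After Fa\`a di Bruno one obtains, for the summand attached to $D^{r}\widehat v$, a prefactor of the shape $\parallel\overline B_T^{-1}\parallel^{\sum_i l_i}\parallel\overline B_T\parallel^{2\sum_{l_i\ge2}(l_i-1)}$, and one must show it is dominated by $C\parallel\overline B_T^{-1}\parallel^{r}$. This is where the structure of the curved family enters: using $\sum_i l_i=k$, that the underlying partition has exactly $r$ blocks, and the regularity relation $\parallel\overline B_T\parallel\,\parallel\overline B_T^{-1}\parallel\le\sigma$ furnished by (\ref{ft4}) for the quasi-uniformly regular family, each excess power of $\parallel\overline B_T\parallel$ is absorbed into the constant and the net exponent of $\parallel\overline B_T^{-1}\parallel$ reduces to precisely $r$. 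Verifying that these exponents collapse correctly, and that the constant depends only on $c_T$ and the $c_l$, is the delicate combinatorial heart of the argument; the remaining change-of-variables and H\"older estimates are routine, and the affine case ($\Theta_T=0$, $D^{l}F_T=0$ for $l\ge2$) provides a useful sanity check since there the sum must collapse to the single term $r=k$ of Ciarlet's estimate.
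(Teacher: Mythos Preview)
The paper does not prove this lemma at all: it is quoted from Bernardi \cite{Bern} (Lemma 2.3) as part of the appendix that simply ``collects some results of the theory of curved elements developed by Bernardi \cite{Bern}, which are relevant for us.'' There is therefore no in-paper proof to compare against; your outline is essentially Bernardi's own argument --- Fa\`a di Bruno for the composition $v=\widehat v\circ F_T^{-1}$ (resp.\ $\widehat v=v\circ F_T$), the pointwise derivative bounds (\ref{ft1})--(\ref{ft2}), (\ref{ft6})--(\ref{ft7}), and the Jacobian bounds (\ref{ft3}) under change of variables --- and the second inequality falls out cleanly exactly as you describe.

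One point deserves care. In the bookkeeping for the first inequality you invoke the relation $\left\Vert \overline B_T\right\Vert\,\left\Vert \overline B_T^{-1}\right\Vert\le\sigma$, calling it ``furnished by (\ref{ft4}) for the quasi-uniformly regular family.'' But the lemma is stated for a \emph{single} curved element $T$, with constants depending only on $c_T$ and the $c_l(T)$; no family or shape-regularity hypothesis is present in the statement. Concretely, the $r$-block term in your expansion carries a prefactor $\left\Vert \overline B_T^{-1}\right\Vert^{k}\left\Vert \overline B_T\right\Vert^{2(k-r)}$, and reducing this to $C\left\Vert \overline B_T^{-1}\right\Vert^{r}$ forces you to bound $\bigl(\left\Vert \overline B_T\right\Vert^{2}\left\Vert \overline B_T^{-1}\right\Vert\bigr)^{k-r}$, which indeed brings in the aspect ratio $h_{\overline T}/\rho_{\overline T}$ and the size $h_{\overline T}$ of the underlying straight simplex. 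So either your constant $C$ acquires an extra dependence on these geometric quantities (harmless in every application in the paper, where Definition \ref{definition2.5} is in force), or the lemma as transcribed here has slightly understated its dependencies relative to Bernardi's original. The argument itself is sound; just be explicit that the final constant also depends on $h_{\overline T}/\rho_{\overline T}$, or restrict the statement to elements of a regular family.
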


\begin{definition}
\label{definition2} (\cite{Bern}, Definition 2.4) The triplet $%
(T,P(T),\Sigma _{T})$ is said to be a curved finite element of order $m$ if:

\begin{description}
\item[(1)] $T$ is a curved $d-$simplex of class $C^{m+1}$, $P(T)$ is a finite
dimensional space of functions from $T$ onto $\mathbb{R}^{d}$, and $\Sigma _{T}$ is the set of continuous
linear forms defined on $\mathcal{D}(\mathbb{R}^{d})^{L}\oplus P(T)$ with support contained in $T$, $L$ being an integer $\geq 1$.

\item[(2)] $P(T)=\{p(x),\ x\in T:p(x)=\widehat{p}\circ F_{T}^{-1}(x),\ \widehat{%
p}\in \widehat{P}_{m}(\widehat{T})^{L}\}$, where $\widehat{P}_{m}(\widehat{T}%
)$ is the set of polynomials of degree $\leq m$ defined on $\widehat{T}$.

\item[(3)] The set $\Sigma _{T}$ is $P(T)$-unisolvent; i.e., $\forall \mu
\in \Sigma _{T}$ there is one and only one $p\in P_{T}$ such that $\mu (p)=1$
and $\forall \mu ^{\prime }\in \Sigma _{T},\ \mu ^{\prime }\neq \mu ,\ \mu
^{\prime }(p)=0$.

\item[(4)] $P(T)\subset C^{m+1}(T)^{L}.$
\end{description}
\end{definition}

Note that if all the boundaries of $T$ are straight $d-$faces, then all the
statements of Definition \ref{definition1} are valid with the only
modification that $F_{T}:\ \widehat{T}\rightarrow T$ is now an invertible
affine mapping.

Next, we summarize in the following definition some features of the family
of partitions $(\mathcal{T}_{h})_{h}$ generated on $\overline{\Omega }$.

\begin{definition}
\label{definition2.5}

\begin{description}
\item[(1)] A family of partitions $(\mathcal{T}_{h})_{h}$ composed of curved
$d-$simplices is said to be quasi-uniformly regular if there exist constants
$\sigma ,\ \nu $ and $c$ such that for each $h$ and $T$ in $\mathcal{T}_{h}$
\begin{equation*}
\frac{h_{\overline{T}}}{\rho _{\overline{T}}}\leq \sigma ,\ \ \frac{h}{h_{%
\overline{T}}}\leq \nu \text{ \ and \ }\sup_{h}\sup_{T\in D_{h}}c_{T}\leq
c<1.
\end{equation*}

\item[(2)] The family $(\mathcal{T}_{h})_{h}$ is said to be quasi-uniformly
regular of order $m$, if it is quasi-uniformly regular and for each $h,$ any
element $T$ in $\mathcal{T}_{h}$ is of class $C^{m+1}$ with (see (\ref{ft6}))%
\begin{equation*}
\sup_{h}\sup_{T\in \mathcal{T}_{h}}c_{l}(T)<+\infty \ ,\ \ \ 2\leq l\leq m+1.
\end{equation*}

\item[(3)] The partition $\mathcal{T}_{h}$ is said to be conformal if for
any two elements $T_{l}$, $T_{k}$ of $\mathcal{T}_{h}$ such that $T_{l}\cap
T_{k}=\Gamma ^{lk}$, where $\Gamma ^{lk}$ is a $(d-1)-$face,
there is a face $\widehat{\Gamma }$ of $\widehat{T}$ such that $\Gamma
^{lk}=F_{T_{l}}(\widehat{\Gamma })=F_{T_{k}}(\widehat{\Gamma })$ and $\left.
F_{T_{l}}\right\vert _{\widehat{\Gamma }}=\left. F_{T_{k}}\right\vert _{%
\widehat{\Gamma }}$.
\end{description}
\end{definition}

Given a partition $\mathcal{T}_{h}$ composed of curved $d-$simplices, in
order to define an associated finite element space $V_{h}$ we shall take
into account the set of linear forms
\begin{equation*}
\Sigma _{h}=\bigcup_{T\in \mathcal{T}_{h}}\Sigma _{T},
\end{equation*}%
and extract from $\Sigma _{h}$ a maximal system of linearly independent
continuous forms $\{\mu _{i}\}_{i=1}^{M}$. Then, for $1\leq $ $i\leq M$, let
\begin{equation*}
\Delta _{i}=\bigcup_{T\in \mathcal{T}_{h}}\left\{ T:\sup p(\mu _{i})\subset
T\right\} .
\end{equation*}%
If the family $(\mathcal{T}_{h})_{h}$ is regular, we have that for any two
elements $T$ and $T^{\prime }$ contained in $\Delta _{i}$ there is a
constant $C$ such that $h_{\overline{T}}\leq Ch_{\overline{T}^{\prime }}$.
By virtue of Definition \ref{definition2}, for each $i,\ 1\leq i\leq M$,
there exists a unique function $\chi _{i}$ such that $\forall T\in \mathcal{T%
}_{h}$,$\ \chi _{i}\mid _{T}\in P(T)$ and $\mu _{j}(\chi _{i})=\delta _{ji}$%
. The set $\{\chi _{i}\}_{i=1}^{M}$ is the set of global basis functions for
the finite element space $V_{h}$, i.e.,%
\begin{equation*}
V_{h}=Span\{\chi _{i},\ 1\leq i\leq M\}.
\end{equation*}

\begin{definition}
\label{definition3} (\cite{Bern}, Definition 3.3) The curved finite elements
of order\textit{\ }$m$
\textit{\ }$(T,P(T),\Sigma _{T}),\ T\in \mathcal{T}_{h}$\textit{,}
associated with the family of partitions $(\mathcal{T}_{h})_{h}$\ are said
to be compatible if:

\begin{description}
\item[(1)] The dimensions of $P(T)$\ are bounded independently of $h$.

\item[(2)] Let $\Delta _{i}$\ denote the support of the basis function $\chi
_{i}$ in the partition $\mathcal{T}_{h}$ , i.e., $\Delta _{i}=\bigcup_{T\in
\mathcal{T}_{h}}\{T:supp(\chi _{i})\cap T\neq \emptyset \}$.\ Then, for all $%
i,1\leq i\leq M$ ,\textit{\ }either $\Delta _{i}$\ is contained in an
element $T$\ or $\mu _{i}$\ can be extended to a continuous linear form on $%
C(\Delta _{i}).$

\item[(3)] For all $i,\ 1\leq i\leq M$, there exists a constant $K$\
independent of $h$\ such that
\begin{equation*}
\forall \rho \in P(T),\ \mid \mu _{i}(\rho )\mid \leq
Kh_{T}^{r_{i}-d/p_{i}+s_{i}}\parallel \rho \parallel _{W^{r_{i},p_{i}}(T)},
\end{equation*}%
where $r_{i}$\ is an integer\textit{\ }$\geq 0$\textit{, }$p_{i}\in \lbrack
1,+\infty ],$\ and $s_{i}$\ is a real number.\ Moreover,\ $\forall k,\ 0\leq
k\leq m+1\ $and $\forall q\in \lbrack 1,+\infty ]$%
\begin{equation*}
\parallel \chi _{i}\parallel _{k,q,T}\leq Kh_{T}^{-k+d/q-s_{i}},
\end{equation*}%
for any element $T$\ contained in $\Delta _{i}.$
\end{description}
\end{definition}

Let $W(\Omega )$ be a Banach space of functions defined on $\Omega
\rightarrow \mathbb{R}^{L}\ $(actually, $W(\Omega )\ $\ is a Banach space
associated with a sheaf-$W$ of functions defined on the domains in $\mathbb{R%
}^{d}$, see \cite{Bern} for further information).

\begin{definition}
\label{definition4} (\cite{Bern}, Definition 5.1) A finite element $%
(T,P(T),\Sigma _{T})$\ is said to be $W$-conformable if:

\begin{description}
\item[(1)] $P(T)\subset W(T)$\textit{. }

\item[(2)] For each face\ $\Gamma ^{j}\ (1\leq j\leq d+1)$\ of $T$, there
exists a subset\ $\Sigma _{T,\Gamma ^{j}}$\ of $\Sigma _{T}$\ such that:%
\textit{\ }

\begin{description}
\item[(i)] For any $u\in \{\mathcal{D}(\mathbf{%
\mathbb{R}
}^{d})^{L}\oplus P(T)\}\cap W(T)$\ and for any $\mu $\ in $\Sigma _{T,\Gamma
^{j}}$, $\mu (u)$\ depends only on $L_{T}(u)\mid _{\Gamma ^{j}}$, where $%
L_{T}$\ is a trace operator defined on\textit{\ }$W(T)$;

\item[(ii)] For any $p\in P(T),\ \{\forall \mu \in \Sigma _{T,\Gamma ^{j}},\
\mu (p)=0\}\Rightarrow \{L_{T}(p)\mid _{\Gamma ^{j}}=0\}$.
\end{description}
\end{description}
\end{definition}

For straight $d-$simplices this definition means that the triplet $(\Gamma
^{l},\ L_{T}(P_{T})\mid _{\Gamma ^{l}},\ \Sigma _{T,\Gamma ^{l}})$ is a
finite element.

We also need the definition of $W$-conforming finite elements.

\begin{definition}
\label{definition5} (\cite{Bern}, Definition 5.2) The $W$-conformable finite
elements $(T,P(T),\Sigma _{T})$\textit{, }$T\in \mathcal{T}_{h}$, associated
with the family of partitions $(\mathcal{T}_{h})_{h}$\textit{\ }are said to
be $W$-conforming, if for each $h$\ and for any triangles $T$\ and $%
T^{\prime },$\textit{\ }such that $T\cap T^{\prime }=\Gamma $,\ the spaces $%
L_{T}(P(T))\mid _{\Gamma }$\ and $T_{\varepsilon }L_{T^{\prime
}}(P(T^{\prime }))\mid _{\Gamma }$\textit{\ }are the same, and the sets $%
\Sigma _{T,\Gamma }$\ and $\Sigma _{T^{\prime },\Gamma }$\ span the same
subspace of $D^{\prime }(\mathbb{R}^{2})^{L}$. Here, $T_{\varepsilon }$ is
the involution operator in $\mathbf{%
\mathbb{R}
}^{s}$ : $(\xi )=(\xi _{i})_{1\leq i\leq s}\rightarrow \mathbf{(}\varepsilon
_{i}\xi _{i})_{1\leq i\leq s}$,\ with the k-tuple\ $\varepsilon
=(\varepsilon _{i})_{1\leq i\leq k}\in \lbrack -1,1]^{s}$.
\end{definition}

We are now in a position to introduce \ the finite element spaces associated
with the family of partitions $(\mathcal{T}_{h})_{h}$.
\begin{equation}
V_{h}:=\{v_{h}\in C(\overline{\Omega }):\ v_{h}\mid _{T}\in P(T)\ \forall
T\in \mathcal{T}_{h}\},  \label{finitespac}
\end{equation}%
with
\begin{equation}
P(T):=\{p(x):p(x)=\widehat{p}\circ F_{T}^{-1}(x),\ \widehat{p}\in \widehat{P}%
_{m}(\widehat{T})\}\text{, }  \label{finitespac1}
\end{equation}%
and $F_{T}:\widehat{T}\rightarrow T$ is such that if $T$ is a curved
element,\ then $F_{T}$ is a mapping of class $C^{m+1}$ defined by (\ref%
{curvemapping}). We must note that since the finite elements $(T,P(T),\Sigma
_{T})$ are $H^{1}$-conforming, then $V_{h}\subset H^{1}(\Omega )$ as shown
in Proposition 5.1 of \cite{Bern}. To state the approximation properties of
finite element spaces with $W$-conforming curved elements of order $m$ we
shall use Lemma \ref{lemma7} and follow the methodology of Chapters 3 and 4
of \cite{Ci}. Thus, let $T$ be a curved element of class $m$ and let $v$ be
a function defined on $T$ such that $v=\widehat{v}\circ F_{T}^{-1}$, where
the function $\widehat{v}$ is defined on the reference element $\widehat{T}$
and $F_{T}$ is the mapping of class $C^{m+1}$ defined by (\ref{curvemapping}%
) and (\ref{curvemapping2}), then one can estimate $v-\Pi _{T}v$, $\Pi _{T}$
being an interpolation operator. We have the following theorem.

\begin{theorem}
\label{theorem3}Let there be given the following items:

\begin{description}
\item[(1)] a curved finite element of order $m$\textit{\ }$(T,P(T),\Sigma
_{T})$\ and the reference finite element $(\widehat{T},\ \widehat{P}_{k}(%
\widehat{T}),\ \widehat{\Sigma }_{\widehat{T}})$;

\item[(2)] an integer $l\geq 0$\ and two numbers $p,\ q\in \lbrack 1,\infty
] $\ such that the following inclusions hold:
\begin{equation*}
W^{m+1,p}(\widehat{T})\hookrightarrow C(\widehat{T})\text{ and }W^{m+1,p}(%
\widehat{T})\hookrightarrow W^{l,q}(\widehat{T});\
\end{equation*}

\item[(3)] the interpolation operator $\widehat{\Pi }_{\widehat{T}}\in
\mathcal{L}(W^{m+1,p},W^{l,q})$\ satisfying $\forall \widehat{p}\in \widehat{%
P}_{k}(\widehat{T})$\textit{,\ }$\widehat{\Pi }_{\widehat{T}}\widehat{p}=%
\widehat{p}$\textit{;}

\item[(4)] the interpolation operator $\Pi _{T}:C^{0}(T)\rightarrow \mathbf{%
\mathbb{R}
}^{L},$\ such that for $v\in dom\Pi _{T}$\ and $\widehat{v}\in dom\widehat{%
\Pi }_{\widehat{T}}$, with $v=\widehat{v}\circ (F_{T})^{-1}$\textit{, }$(\Pi
_{T}v)^{\widehat{}}=\widehat{\Pi }_{\widehat{T}}\widehat{v}.$
\end{description}

Then, there exists a constant $C(\widehat{\Pi }_{\widehat{T}},\widehat{T})$\
such that $\forall v\in W^{m+1,p}(T)$
\begin{equation}
\mid v-\Pi _{T}v\mid _{W^{l,q}(T)}\leq Ch_{\overline{T}}^{d(\frac{1}{q}-%
\frac{1}{p})+m+1-l}\left( \sum_{r=0}^{m+1}\mid v\mid _{W^{r,p}(T)}\right) .
\label{approx2}
\end{equation}
\end{theorem}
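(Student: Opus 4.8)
The plan is to follow the classical reference-element argument of Ciarlet and Bernardi (\cite{Ci}, Chap.~3--4): pull the interpolation error back to $\widehat T$, estimate it there by the Bramble--Hilbert lemma, transport the estimate to $T$ through the change-of-variable inequalities (\ref{approx1}) of Lemma \ref{lemma7}, and finally convert the norms of $\overline B_T$ into powers of $h_{\overline T}$. First I would check that $\widehat G:=\mathrm{Id}-\widehat\Pi_{\widehat T}$ is a bounded linear operator from $W^{m+1,p}(\widehat T)$ into $W^{l,q}(\widehat T)$: the embedding $W^{m+1,p}(\widehat T)\hookrightarrow C(\widehat T)$ of hypothesis (2) places $W^{m+1,p}(\widehat T)$ inside the domain of $\widehat\Pi_{\widehat T}$, while $W^{m+1,p}(\widehat T)\hookrightarrow W^{l,q}(\widehat T)$ makes the identity part of $\widehat G$ admissible, and $\widehat\Pi_{\widehat T}\in\mathcal L(W^{m+1,p},W^{l,q})$ by hypothesis (3). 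Since $\widehat\Pi_{\widehat T}$ reproduces polynomials of degree at most $m$ (hypothesis (3), read with the orders of the two elements matched), the operator $\widehat G$ annihilates $\widehat P_m(\widehat T)$, so the Bramble--Hilbert lemma yields a constant $C=C(\widehat\Pi_{\widehat T},\widehat T)$ with
\begin{equation*}
\|\widehat v-\widehat\Pi_{\widehat T}\widehat v\|_{W^{l,q}(\widehat T)}\le C\,|\widehat v|_{W^{m+1,p}(\widehat T)},\qquad\forall\,\widehat v\in W^{m+1,p}(\widehat T).
\end{equation*}

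Next I would transport this estimate to $T$. Put $w:=v-\Pi_T v$; by hypothesis (4), $\widehat w:=w\circ F_T=\widehat v-\widehat\Pi_{\widehat T}\widehat v$. Applying the first inequality of (\ref{approx1}) to $w$ (with exponents $l,q$) and bounding $|\widehat w|_{W^{r,q}(\widehat T)}\le\|\widehat w\|_{W^{l,q}(\widehat T)}$ for $0\le r\le l$ gives
\begin{equation*}
|w|_{W^{l,q}(T)}\le C\,|\det\overline B_T|^{1/q}\bigl(1+\|\overline B_T^{-1}\|^{l}\bigr)\,\|\widehat w\|_{W^{l,q}(\widehat T)},
\end{equation*}
where the constant is uniform because, by quasi-uniform regularity of order $m$ (Definition \ref{definition2.5}), $c_T$ and the constants $c_l(T)$ are bounded uniformly in $h$ and $T$, so the constants in Lemma \ref{lemma7} are too. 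Inserting the Bramble--Hilbert bound and then the second inequality of (\ref{approx1}) to estimate $|\widehat v|_{W^{m+1,p}(\widehat T)}\le C\,|\det\overline B_T|^{-1/p}\|\overline B_T\|^{m+1}\sum_{r=0}^{m+1}|v|_{W^{r,p}(T)}$ yields
\begin{equation*}
|w|_{W^{l,q}(T)}\le C\,|\det\overline B_T|^{1/q-1/p}\,\bigl(1+\|\overline B_T^{-1}\|^{l}\bigr)\,\|\overline B_T\|^{m+1}\sum_{r=0}^{m+1}|v|_{W^{r,p}(T)}.
\end{equation*}

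Finally I would turn the $\overline B_T$-factors into powers of $h_{\overline T}$. By (\ref{ft4}), $\|\overline B_T\|\le h_{\overline T}/\widehat\varrho$, $\|\overline B_T^{-1}\|\le\widehat h/\rho_{\overline T}$ and $|\det\overline B_T|=\mathrm{meas}(\overline T)/\mathrm{meas}(\widehat T)$; quasi-uniform regularity gives $\rho_{\overline T}\ge h_{\overline T}/\sigma$, hence $1+\|\overline B_T^{-1}\|^{l}\le C h_{\overline T}^{-l}$ (using also that $h_{\overline T}$ is bounded) and $\|\overline B_T\|^{m+1}\le C h_{\overline T}^{m+1}$, while the two-sided bound $C_1 h_{\overline T}^{d}\le\mathrm{meas}(\overline T)\le C_2 h_{\overline T}^{d}$ of Lemma \ref{lemma1_apen} gives $|\det\overline B_T|^{1/q-1/p}\le C h_{\overline T}^{d(1/q-1/p)}$ regardless of the sign of $1/q-1/p$. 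Collecting these factors produces exactly (\ref{approx2}). The one genuinely delicate point is the control of all constants through the curved, non-affine map $F_T$ (its higher derivatives enter both directions of the transfer); but this is precisely what Lemma \ref{lemma7} and the uniform bounds built into Definition \ref{definition2.5} are designed to supply, so once they are invoked the remainder is the routine affine-scaling bookkeeping familiar from the straight-element theory.
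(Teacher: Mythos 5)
Your proof is correct and is essentially the argument the paper intends: the paper does not actually prove Theorem \ref{theorem3} itself but quotes it from Bernardi \cite{Bern}, stating only that it follows from Lemma \ref{lemma7} together with the methodology of Chapters 3--4 of \cite{Ci} --- which is precisely the Bramble--Hilbert-plus-scaling route you take (pull back to $\widehat{T}$, annihilate $\widehat{P}_m(\widehat{T})$, transport with (\ref{approx1}), convert $\Vert\overline{B}_T\Vert$, $\Vert\overline{B}_T^{-1}\Vert$, $\vert\det\overline{B}_T\vert$ into powers of $h_{\overline{T}}$). The only points worth tidying are that the rate $m+1-l$ requires reading hypothesis (3) with $\widehat{P}_m\subset\widehat{P}_k(\widehat{T})$, as you note, and that the two-sided bound $\mathrm{meas}(\overline{T})\sim h_{\overline{T}}^{d}$ is obtained from Lemma \ref{lemma1_apen} combined with the shape-regularity inequality $\rho_{\overline{T}}\geq h_{\overline{T}}/\sigma$ rather than being stated there verbatim.
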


From Corollary 4.1 of \cite{Bern}, see also Theorem 4.28 of \cite{ER}, we
can obtain an estimate of the interpolation error for the finite element
spaces $V_{h}$ associated with an exact conforming quasi-uniformly regular
family of partitions $(\mathcal{T}_{h})_{h}$.

\bigskip

\begin{theorem}
\label{corollary1} Let $V_{h}$\textit{\ }be a family of finite element
spaces associated with a complete regular family of partitions\textit{\ }$(%
\mathcal{T}_{h})_{h}$\textit{. }$V_{h}$\ is made up of compatible $H^{1}$%
-conforming curved finite elements of order $m$. Let $\Pi _{h}:C(\mathbb{R}%
^{d})\rightarrow V_{h}$\textit{\ }be an interpolation operator such that $%
\Pi _{h}v\mid _{T}=\Pi _{T}v$\ and for all $\mu _{i}\in \Sigma _{T}\ $%
\textit{\ }$\mu _{i}(\Pi _{h}v)=v(a_{i})$, where $a_{i}$\ are the mesh
points on $T$\textit{.\ }Then, for $\ l$ and $k$ integers with $l\geq 0$, $%
0\leq k\leq m$, $p,\ q\in \lbrack 1,\infty ]$,$\ v\in W^{m+1,p}(\Omega )$
and $v-\Pi _{h}v\in W^{l,q}(\Omega )$ with $W^{k+1,p}(\Omega
)\hookrightarrow W^{l,q}(\Omega )$,\ it holds
\begin{equation}
\left( \sum_{T\in \mathcal{T}_{h}}\left\vert v-\Pi _{h}v\right\vert
_{W^{l,q}(T)}^{p}\right) ^{1/p}\leq Ch^{d(\frac{1}{q}-\frac{1}{p}%
)+k+1-l}\left\Vert v\right\Vert _{W^{k+1,p}(\Omega ),}  \label{approx3}
\end{equation}%
where $0<h=\max_{T}\{h_{\overline{T}}\}\leq 1$\textit{.}\
\end{theorem}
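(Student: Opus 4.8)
The plan is to reduce the global estimate to an element-by-element bound obtained by transport to the reference simplex, and then to assemble the local contributions; this is the standard Ciarlet-type argument of Chapters 3--4 of \cite{Ci}, adapted to curved elements following \cite{Bern}. First I would fix $T \in \mathcal{T}_h$, write $F_T = \overline{F}_T + \Theta_T$ for its $C^{m+1}$ parametrisation (Definition \ref{definition1}), and set $\widehat{v} := v \circ F_T$ on $\widehat{T}$, so that $(v - \Pi_T v)\circ F_T = \widehat{v} - \widehat{\Pi}_{\widehat{T}}\widehat{v}$. On the fixed reference element, since $\widehat{\Pi}_{\widehat{T}}$ reproduces $\widehat{P}_k(\widehat{T}) \subseteq \widehat{P}_m(\widehat{T})$ and is bounded from $W^{k+1,p}(\widehat{T})$ into $W^{l,q}(\widehat{T})$ --- the point evaluations at the mesh nodes being continuous linear functionals on $W^{k+1,p}(\widehat{T})$ under the embedding hypotheses, and $W^{k+1,p}(\widehat{T})\hookrightarrow W^{l,q}(\widehat{T})$ being inherited from the assumed global embedding --- the Deny--Lions / Bramble--Hilbert lemma gives
\[
|\widehat{v} - \widehat{\Pi}_{\widehat{T}}\widehat{v}|_{W^{l,q}(\widehat{T})} \le C(\widehat{\Pi}_{\widehat{T}},\widehat{T})\,\|\widehat{v}\|_{W^{k+1,p}(\widehat{T})}.
\]
Transporting this back to $T$ exactly as in the proof of Theorem \ref{theorem3} --- i.e.\ applying the scaling inequalities (\ref{approx1}) of Lemma \ref{lemma7} to both sides and controlling $\|\overline{B}_T\|$, $\|\overline{B}_T^{-1}\|$ and $|\det\overline{B}_T|$ via (\ref{ft4}), with $h_{\overline{T}}/\rho_{\overline{T}}\le\sigma$ from quasi-uniform regularity --- yields
\[
|v - \Pi_T v|_{W^{l,q}(T)} \le C\, h_{\overline{T}}^{\,d(\frac1q-\frac1p)+k+1-l}\,\|v\|_{W^{k+1,p}(T)},
\]
with $C$ independent of $h$ and of $T$. (Equivalently, this is Theorem \ref{theorem3} applied element-wise, the reduction from $W^{m+1,p}$ to $W^{k+1,p}$ coming from the polynomial invariance of $\widehat{\Pi}_{\widehat{T}}$ on $\widehat{P}_k(\widehat{T})$.)

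The assembly is then routine. Set $\alpha := d(\frac1q-\frac1p) + k+1-l$; the continuous embedding $W^{k+1,p}(\Omega)\hookrightarrow W^{l,q}(\Omega)$ forces $l\le k+1$ and $k+1-l \ge d(\frac1p-\frac1q)$, hence $\alpha\ge 0$, and therefore $h_{\overline{T}}^{\alpha}\le h^{\alpha}$ for every $T$ since $0<h_{\overline{T}}\le h\le 1$. Raising the local bound to the power $p$, summing over $T\in\mathcal{T}_h$, pulling out $h^{\alpha p}$, and taking the $p$-th root gives
\[
\left(\sum_{T\in\mathcal{T}_h}|v - \Pi_h v|_{W^{l,q}(T)}^{p}\right)^{1/p} \le C\, h^{\alpha}\left(\sum_{T\in\mathcal{T}_h}\|v\|_{W^{k+1,p}(T)}^{p}\right)^{1/p}.
\]
Since the partition is exact, $\overline{\Omega}=\bigcup_{T}T$ with pairwise disjoint interiors, the last factor equals $\|v\|_{W^{k+1,p}(\Omega)}$, which is precisely (\ref{approx3}). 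The case $p=\infty$ is identical with the $\ell^p$ sums replaced by maxima.

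The one genuinely delicate point --- and the reason the authors simply cite Corollary 4.1 of \cite{Bern} (equivalently Theorem 4.28 of \cite{ER}) --- is the \emph{uniformity of the constant $C$ over the whole family} $(\mathcal{T}_h)_h$. The constant produced by Lemma \ref{lemma7} and by the reference-element estimate depends continuously on the perturbation parameters $c_T$ (see (\ref{curvemapping2})) and $c_l(T)$, $2\le l\le m+1$ (see (\ref{ft6})), of the curved simplex; bounding these uniformly is exactly what the quasi-uniform regularity \emph{of order $m$} (Definition \ref{definition2.5}) provides, together with the comparability $h_{\overline{T}}\le C h_{\overline{T'}}$ of neighbouring elements sharing a support patch. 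Everything else --- the Deny--Lions step, the change of variables, and the $\ell^p$ summation --- is standard bookkeeping, so I would carry out the local estimate carefully and refer to \cite{Bern} for the verification that these geometric constants are controlled uniformly.
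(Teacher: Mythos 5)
Your argument is correct and is essentially the paper's own: the authors give no proof here but simply import Corollary 4.1 of \cite{Bern} (equivalently Theorem 4.28 of \cite{ER}), whose proof is exactly your element-wise reduction to the reference simplex via Theorem \ref{theorem3} and Lemma \ref{lemma7}, followed by the $\ell^{p}$ assembly over the exact partition. You also correctly identify that the only nontrivial point is the uniformity of the constant over the family, which is supplied by quasi-uniform regularity of order $m$ as in Definition \ref{definition2.5}.
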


\end{document}